\journal{Journal of Computational Physics}
\crefname{equation}{}{}
\newcommand{\etal}{\emph{et al.}~}
\newcommand{\ie}{i.e.,~}
\newcommand{\eg}{e.g.,~}
\newcommand{\tensor}[1]{\overset\leftrightarrow{#1}}
\newcommand{\state}[1]{\boldsymbol{#1}}
\newcommand{\mat}[1]{\boldsymbol{#1}}
\newcommand{\T}{\mathrm{T}}
\newcommand{\entropy}{\eta}
\newcommand{\entflux}{F}
\newcommand{\topography}{b}
\newcommand{\totalheight}{H}
\newcommand{\vref}{V}
\newcommand{\entVar}{{\state{w}}}
\newcommand{\refelem}{D}
\newcommand{\physelem}{E}
\newcommand{\cartbasis}{\vec{e}}
\newcommand{\covbasis}{\vec{a}}
\newcommand{\contbasis}{\vec{a}\,}
\newtheorem{lemma}{Lemma}
\newtheorem{theorem}{Theorem}
\theoremstyle{definition}
\newtheorem{definition}{Definition}
\theoremstyle{remark}
\newtheorem{remark}{Remark}
\begin{document}

\begin{frontmatter}

\title{Entropy-stable discontinuous spectral-element methods for the spherical shallow water equations in covariant form}

\author[1,2]{\texorpdfstring{Tristan Montoya\corref{cor1}}{Tristan Montoya}}
\ead{tristan.montoya@usask.ca}
\author[3]{Andrés M.~Rueda-Ramírez}
\author[1,4]{Gregor J.~Gassner}
\cortext[cor1]{Corresponding author, \href{https://orcid.org/0000-0002-4259-1449}{ORCiD: \texttt{0000-0002-4259-1449}}.}
\address[1]{Department of Mathematics and Computer Science, University of Cologne, Germany}
\address[2]{Department of Computer Science, University of Saskatchewan, Canada}
\address[3]{School of Aeronautics (ETSIAE), Universidad Politécnica de Madrid, Spain}
\address[4]{Center for Data and Simulation Science, University of Cologne, Germany}

\begin{abstract}
We introduce discontinuous spectral-element methods of arbitrary order that are well balanced, conservative of mass, and conservative or dissipative of total energy (\ie a mathematical entropy function) for a covariant flux formulation of the rotating shallow water equations with variable bottom topography on curved manifolds such as the sphere. The proposed methods are based on a skew-symmetric splitting of the tensor divergence in covariant form, which we implement and analyze within a general flux-differencing framework using tensor-product summation-by-parts operators. Such schemes are proven to satisfy semi-discrete mass and energy conservation on general unstructured quadrilateral grids in addition to well balancing for arbitrary continuous bottom topographies, with energy dissipation resulting from a suitable choice of numerical interface flux. Furthermore, the proposed covariant formulation permits an analytical representation of the geometry and associated metric terms while satisfying the aforementioned entropy stability, conservation, and well-balancing properties without the need to approximate the metric terms so as to enforce discrete metric identities. Numerical experiments on cubed-sphere grids are presented in order to verify the schemes' structure-preservation properties as well as to assess their accuracy and robustness within the context of several standard test cases characteristic of idealized atmospheric flows. Our theoretical and numerical results support the further development of the proposed methodology towards a full dynamical core for numerical weather prediction and climate modelling, as well as broader applications to other hyperbolic and advection-dominated systems of partial differential equations on curved manifolds.
\end{abstract}

\begin{keyword}
Entropy stability \sep summation-by-parts \sep discontinuous Galerkin \sep geophysical fluid dynamics \sep hyperbolic partial differential equations \sep manifolds
\MSC[2020] 65M12 \sep 65M60 \sep 65M70 \sep 58J45 \sep 76U60
\end{keyword}

\end{frontmatter}

\section{Introduction}

The development of efficient, robust, and flexible numerical methods for solving the shallow water equations in spherical geometry constitutes an important early step in the development of a new dynamical core for the atmosphere or ocean component of a global weather or climate model, with verification through shallow water test cases (for example, those proposed by Williamson \etal \cite{williamson1992standard}, Galewsky \etal \cite{galewsky_barotropic_instability_04}, and Läuter \etal \cite{laeuter_unsteady_analytical_swe_05}) serving as an important precursor to the solution of more complex geophysical flow problems. Consisting of a scalar equation for mass conservation and a vector equation for momentum conservation, the spherical shallow water equations also represent a prototypical example of a nonlinear hyperbolic system of balance laws on a curved manifold, thereby facilitating the development of methods and algorithms applicable to a broad class of similar partial differential equations arising in various scientific and engineering disciplines. Offering excellent parallel scalability and geometric flexibility as well as the ability to achieve low dissipative and dispersive errors on relatively coarse grids, high-order continuous and discontinuous spectral-element methods represent attractive choices for solving such systems of equations and have therefore been a major focus of recent advances in the development of next-generation numerical methods for global weather and climate models.
\par 
Foundational contributions to the development of continuous Galerkin (CG) methods (see, for example, Taylor \etal \cite{taylor_spectral_element_97}, Giraldo \cite{giraldo_spectral_element_shallow_water_01}, and Thomas and Loft \cite{thomas_loft_semi_implicit_sem_02}) and discontinuous Galerkin (DG) methods (see, for example, Giraldo \etal \cite{giraldo_hesthaven_warburton_nodal_dg_shallow_water_02} and Nair \etal \cite{nair_dg_shallow_water_05}) for the spherical shallow water equations have since been extended to three-dimensional nonhydrostatic atmospheric models based on spectral-element dynamical cores\footnote{In this paper, we use the term \emph{spectral-element method} in a general sense to denote any high-order element-based discretization, whether or not it is derived from a standard Galerkin procedure, and whether or not a collocated nodal basis is used. This family of methods includes standard nodal and modal DG and CG schemes as special cases.} at major modelling centres worldwide (see, for example, the review by Marras \etal \cite{marras_galerkin_review_16} and references therein). A major barrier to the widespread adoption of spectral-element approaches, and high-order methods more broadly, is that their efficiency often comes at the price of robustness for nonlinear problems as a result of their inherently low numerical dissipation, which renders such methods susceptible to aliasing-driven instabilities and nonphysical oscillations introduced by under-resolved high-frequency components of the numerical solution. As discussed in \cite[Section~4]{marras_galerkin_review_16}, continuous as well as discontinuous spectral-element methods typically achieve robustness in practice through the use of \emph{ad hoc} stabilization techniques such as modal filtering or artificial viscosity, which must be tuned carefully in order to avoid introducing excessive dissipative error into the numerical solution. Alternatively, overintegration techniques involving the use of a larger number of quadrature points than required for a linear problem (see, for example, Kirby and Karniadakis \cite{kirby_dealiasing_03} and Mengaldo \etal \cite{mengaldo_dg_fr_dealiasing_15}) can be used to reduce aliasing for nonlinear problems. However, overintegration significantly increases a scheme's computational expense, does not generally provide any theoretical assurance of stability, and has been shown to be insufficient in certain cases when applied to under-resolved discretizations of compressible flows (see, for example, Gassner \etal \cite{gassner_winters_kopriva_splitform_dg_sbp_16} and Winters \etal \cite{winters_moura_mengaldo_overintegration_vs_splitform_18}).
\par 
Tracing their origins to the early work of Kreiss and Scherer \cite{kreiss_scherer_sbp_74} and Tadmor \cite{tadmor_entropy_stable_fv_87} as well as more recent developments by LeFloch \etal \cite{lefloch_entropy_conservative_arbitrary_order_02}, Fisher \cite{fisher_phd_thesis_12}, and Gassner \cite{gassner_dgsem_sbp_13}, modern structure-preserving numerical methods based on summation-by-parts (SBP) operators have evolved into a powerful algebraic framework for the construction of schemes which overcome the robustness issues traditionally afflicting high-order discretizations. These developments are reviewed within the context of discontinuous spectral-element methods applied to computational fluid dynamics by Gassner and Winters \cite{gassner_winters_novel_robust_dg_21}, with the two key components being a discrete derivative operator which mimics integration by parts (\ie satisfies the SBP property) and a modified approximation of the flux divergence based on a consistent skew-symmetric splitting or a \emph{flux-differencing} formulation based on specially chosen finite-volume-like fluxes coupling pairs of quadrature points. Such two-point flux functions can be tailored so as to reduce aliasing as well as preserve certain properties of the continuous problem, including conservation or dissipation of mathematical entropy (a strictly convex function generalizing the notion of thermodynamic entropy, corresponding, for example, to the total energy for a shallow water system).
\par
Although entropy-stable discretizations have recently been developed for two-dimensional geophysical flow models based on the vector-invariant form of the spherical shallow water equations (see, for example, Ricardo \etal \cite{ricardo_conservative_stable_dg_spherical_swe_24}), as well as for three-dimensional atmospheric models based on a Cartesian flux formulation (see, for example, Waruszewski \etal \cite{waruszewski_entropy_stable_dg_euler_atmospheric_22}), to the authors' knowledge, the use of a \emph{covariant} flux formulation has not yet been explored within a split-form or flux-differencing framework. In the context of fluid dynamics in curved geometry, the covariant formulation involves the evolution of the contravariant momentum components as prognostic variables and the formulation of the governing equations as a system of balance laws in which divergences of tensor-valued flux functions are formulated in terms of the covariant derivative. This allows for discretizations to be constructed directly on curved manifolds, rather than within a higher-dimensional ambient space, for example, solving systems on two-dimensional surfaces embedded in three-dimensional space using two, rather than three, momentum equations. When extending such formulations to three-dimensional atmospheric models, the covariant form then allows for separation of the horizontal dynamics from the vertical dynamics, where the latter require special numerical treatment due to the finer mesh spacing and fast wave speeds (see, for example, Baldauf \cite{baldauf_dg_hevi_terrain_following_21}). Unlike the vector-invariant form, the covariant form allows for a locally conservative treatment of the momentum balance, up to the terms introduced by the manifold's curvature, and does not require a specialized discretization of the vorticity. These considerations, as well as the fact that discontinuous spectral-element methods for nonhydrostatic atmospheric models in covariant form are the focus of several dynamical cores currently under development for large-scale weather and climate models \cite{baldauf_dg_shallow_water_covariant_20,baldauf_dg_hevi_terrain_following_21,gaudreault_high_order_shallow_water_cubed_sphere_22,kawai_tomita_dg_dynamical_core_25}, motivate the development of entropy-stable covariant formulations as a key contribution towards improving the robustness of atmospheric simulations.
\par 
In this paper, we address the above objective of extending the SBP framework for split-form and entropy-stable discretizations to nonlinear systems of balance laws in covariant form on curved manifolds. Specifically, we introduce discontinuous spectral-element methods for the covariant shallow water equations with variable bottom topography that achieve arbitrary-order accuracy on general unstructured quadrilateral meshes and satisfy semi-discrete conservation of mass, semi-discrete conservation or dissipation of total energy, and well balancing for a constant surface height and zero velocity. This is achieved through a novel flux-differencing discretization of the covariant derivative that generalizes the skew-symmetric split formulation developed by Gassner \etal \cite{gassner_winters_kopriva_shallow_water_16} and Wintermeyer \etal \cite{wintermeyer2017entropy} to curved manifolds, combined with tensor-product spectral-element operators with the SBP property, a suitable treatment of the nonconservative bottom-topography terms, and an entropy-stable interface flux between adjacent elements.
\par 
We now outline the contents and contributions of the remainder of this paper. In \cref{sec:shallow_water_equations}, we focus on the continuous problem, introducing the necessary notation and definitions for working with vectors and tensors on manifolds, the formulation and entropy analysis of the shallow water equations in covariant form, and the derivation of a novel skew-symmetric split form for the covariant derivative which will serve as the starting point for constructing our discretizations. In \cref{sec:discretization}, we present the major components of the schemes, including the mapping from reference to physical space, the tensor-product SBP operators, and the flux-differencing discontinuous spectral-element formulation. In \cref{sec:entropy_stable_covariant}, we introduce entropy-conservative and entropy-stable two-point flux functions based on our proposed skew-symmetric splitting and show that the resulting schemes are consistent and satisfy the aforementioned conservation, entropy stability, and well-balancing properties. Numerical experiments for an unsteady analytical solution \cite[Example~3]{laeuter_unsteady_analytical_swe_05}, the standard isolated mountain test described in \cite[Case~5]{williamson1992standard}, the barotropic instability problem from \cite{galewsky_barotropic_instability_04}, and the Rossby--Haurwitz wave problem described in \cite[Case~6]{williamson1992standard} are presented in \cref{sec:numerical_experiments}, demonstrating the accuracy, structure-preserving properties, and robustness of the proposed schemes for idealized atmospheric test cases on cubed-sphere grids.

\section{Shallow water equations on manifolds}\label{sec:shallow_water_equations}

In this section, we will discuss the formulation and analysis of balance laws on manifolds, with a specific focus on the covariant shallow water equations on general two-dimensional surfaces.

\subsection{Vector and tensor notation}

We begin by introducing some standard notation which will be used in the formulation of balance laws on manifolds. In particular, we will consider the case of a two-dimensional surface $S \subset \mathbb{R}^3$ embedded within a three-dimensional ambient space, which we take to be spanned by the Cartesian unit basis vectors $\cartbasis_x$, $\cartbasis_y$, and $\cartbasis_z$, and equipped with the standard Euclidean dot product and norm. Let us now define $\vec{X}\colon \refelem \to \physelem$ to be a smooth, bijective, and time-invariant mapping from the standard reference domain $\refelem \subset \mathbb{R}^2$ to the physical domain $\physelem \subset S$, which corresponds to an element of a non-overlapping tessellation (\ie a mesh) of $S$, as illustrated in \cref{fig:mapping}. The space of tangent vectors at a point on $\physelem$ is spanned by the \emph{covariant basis vectors}, which are given by
\begin{equation}\label{eq:basis_covariant}
\covbasis_1 \coloneqq \partial_1\vec{X}, \quad \covbasis_2 \coloneqq \partial_2\vec{X},
\end{equation}
where we use $\partial_1$ and $\partial_2$ to denote partial derivatives with respect to the coordinates $\xi^1$ and $\xi^2$ on the reference domain $\refelem$, which correspond to chart coordinates on $S$ induced by the mapping $\vec{X}$. The covariant basis vectors in \cref{eq:basis_covariant} can be used to obtain the \emph{covariant components} of the metric tensor as $G_{ij} \coloneqq \covbasis_i \cdot \covbasis_j$, which constitute the entries of a symmetric positive-definite (SPD) matrix
\begin{equation}\label{eq:metric_tensor_matrix}
\mat{G} \coloneqq \begin{pmatrix}
G_{11} & G_{12} \\ G_{21} & G_{22}
\end{pmatrix}
\end{equation}
characterizing the geometry of the manifold at a given point. The entries of the inverse matrix $\mat{G}^{-1}$ are likewise known as the \emph{contravariant components} of the metric tensor, which we denote by $G^{ij}$.
\par
\begin{figure}[t!]
\centering
\includegraphics[width=0.7\linewidth]{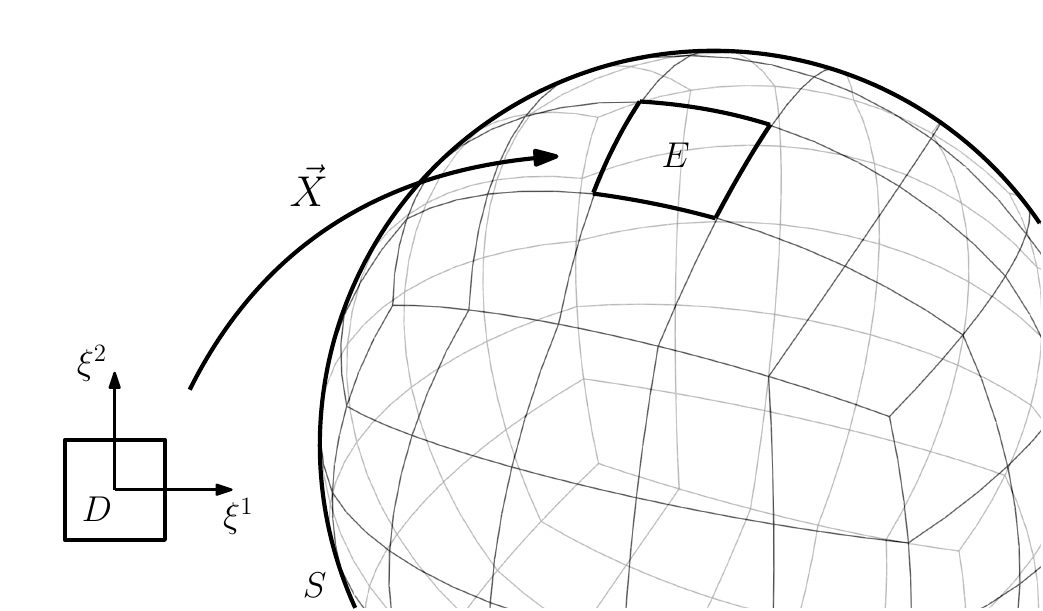}
\caption{Illustration of the surface $S$, reference domain $D$, and physical element $E$}
\label{fig:mapping}
\end{figure}

Employing the summation convention throughout this work for Latin indices repeated across upper and lower positions, any tangent vector to $S$ can be represented as $\vec{v} = v^i\covbasis_i = v^1 \covbasis_1 + v^2 \covbasis_2$ in terms of its contravariant components $v^i$ with respect to the covariant basis vectors $\covbasis_i$, or equivalently as $\vec{v} = v_i \contbasis^i = v_1 \contbasis^1 + v_2 \contbasis^2$ in terms of its covariant components $v_i$ with respect to the contravariant basis vectors $\contbasis^i$. Such components and bases are related by raising and lowering indices as
\begin{equation}\label{eq:raise_lower}
v_i =G_{ij}v^j \iff v^i =G^{ij}v_j, \quad \covbasis_i =G_{ij}\contbasis^j \iff \contbasis^i = G^{ij}\covbasis_j,
\end{equation}
such that the Euclidean dot product and norm can be computed as $\vec{v} \cdot \vec{v} = v_i v^i$ and $\lVert \vec{v} \rVert = \sqrt{v_i v^i}$, respectively. Tensors of second order or higher can be expanded in terms of tensor products of basis vectors, for example, as $\tensor{\tau} = \tau^{ij} \covbasis_i \otimes \covbasis_j$, where the rules in \cref{eq:raise_lower} for raising and lowering indices can be used similarly to convert between covariant, contravariant, and mixed tensor representations.
\par
In order to differentiate a vector field $\vec{v}$ or second-order tensor field $\tensor{\tau}$ along a spatially varying basis vector $\covbasis_k$ given as in \cref{eq:basis_covariant}, we require the \emph{covariant derivative}, which is a tensor field with components given by
\begin{equation}\label{eq:covariant_derivative}
\nabla_k v^i \coloneqq \partial_k v^i + \Gamma_{jk}^i v^j, \quad
\nabla_k\tau^{ij} \coloneqq \partial_k \tau^{ij} + \Gamma_{kl}^i \tau^{lj} + \Gamma_{kl}^j \tau^{il},
\end{equation}
where the \emph{Christoffel symbols of the second kind} are given by
\begin{equation}\label{eq:christoffel_symbols}
\Gamma_{jk}^i \coloneqq
\frac{1}{2}G^{il}\left(\partial_j G_{kl} + \partial_k G_{jl} - \partial_l G_{jk}\right),
\end{equation}
and, following a standard abuse of notation, we emphasize that the operator $\nabla_k$ should be taken as applying to the entire vector field $\vec{v}$ or tensor field $\tensor{\tau}$ rather than to a single component $v^i$ or $\tau^{ij}$. By construction, the covariant derivative satisfies the product rule as well as the \emph{metric compatibility} conditions
\begin{equation}\label{eq:metric_compatibility}
\nabla_kG^{ij} = 0,
\end{equation}
which can be derived in a straightforward manner from \cref{eq:covariant_derivative} and \cref{eq:christoffel_symbols}. The integral of a function $h\colon \physelem \to \mathbb{R}$ can be computed in reference coordinates under the change of variables
\begin{equation}\label{eq:change_of_variables}
\int_{\physelem} h \, \mathrm{d}S = \int_{\refelem} (h \circ \vec{X}) \sqrt{\det \mat{G}}\, \mathrm{d}\xi^1 \mathrm{d}\xi^2,
\end{equation}
where, for convenience of notation, we will define $J \coloneqq \sqrt{\det \mat{G}}$ and suppress composition with $\vec{X}$ when it is unlikely to cause ambiguity. The reader is referred to fluid mechanics texts such as Aris \cite{aris_vectors_tensors_fluid_mechanics} or general relativity texts such as Misner \etal \cite{misner_thorne_wheeler_gravitation} for a more complete introduction to the basic concepts of tensor calculus and differential geometry within the context of continuum physics.

\subsection{Shallow water equations in covariant form}\label{sec:covariant_swe}

Denoting the depth of the fluid layer as $h$ and the velocity vector field as $\vec{v} = v^i \covbasis_i$, the shallow water equations can be formulated on a two-dimensional manifold $S \subset \mathbb{R}^3$ as
\begin{subequations}\label{eq:swe_covariant}
\begin{align}
\partial_t h + \nabla_j (hv^j) &= 0, \label{eq:continuity_covariant}\\
\partial_t (hv^i) + \nabla_j \tau^{ij} &= - fJ G^{ij}\varepsilon_{jk} hv^k - ghG^{ij}\partial_j \topography,\label{eq:momentum_covariant}
\end{align}
\end{subequations}
where $t \geq 0$ is the time coordinate, $g > 0$ is the constant gravitational acceleration, $\topography$ is the spatially varying bottom topography, $f$ is the spatially varying Coriolis parameter, and $\varepsilon_{jk}$ is the Levi-Civita symbol, which takes values of $\varepsilon_{12}=1$, $\varepsilon_{21}=-1$, and $\varepsilon_{11} = \varepsilon_{22} = 0$. The contravariant components of the momentum flux tensor are given by
\begin{equation}\label{eq:momentum_flux}
\tau^{ij} \coloneqq hv^iv^j + \frac{g}{2}h^2G^{ij}.
\end{equation}
Using the definitions in \cref{eq:covariant_derivative} as well as the product rule and the repeated-index identity $\Gamma_{ik}^i = (\partial_k J)/J$,
we can express the divergence of the mass and momentum fluxes as
\begin{equation}\label{eq:flux_divergence}
\nabla_j(hv^j) =\frac{1}{J}\partial_j(Jhv^j), \quad \nabla_j\tau^{ij} =\frac{1}{J}\partial_j(J\tau^{ij}) + \Gamma_{jk}^i \tau^{jk},
\end{equation}
such that the continuity and momentum equations in \cref{eq:continuity_covariant,eq:momentum_covariant}, respectively, can be rewritten using partial derivatives rather than covariant derivatives:
\begin{subequations}\label{eq:pde_reference_space}
\begin{align}
\partial_t h + \frac{1}{J}\partial_j(Jhv^j) &=0,\label{eq:mass_conservative}\\
\partial_t (hv^i) + \frac{1}{J}\partial_j(J\tau^{ij}) &= - fJG^{ij}\varepsilon_{jk}hv^k - ghG^{ij}\partial_j\topography -\Gamma_{jk}^i \tau^{jk}.\label{eq:momentum_conservative}
\end{align}
\end{subequations}
The resulting system of balance laws can then be expressed as
\begin{equation}\label{eq:balance_law_system}
\partial_t\state{u} + \frac{1}{J}\partial_1 (J\state{f}^1) +\frac{1}{J}\partial_2 (J\state{f}^2) = \state{s}_{\mathrm{cor}} + \state{s}_{\mathrm{bot}} + \state{s}_{\mathrm{geo}},
\end{equation}
where the state vector and contravariant flux components are given by
\begin{equation}\label{eq:states_and_fluxes}
\state{u} \coloneqq \begin{pmatrix}
h \\ hv^1 \\ hv^2
\end{pmatrix}, \quad
\state{f}^1 \coloneqq \begin{pmatrix}
hv^1 \\ hv^1v^1 + \frac{g}{2}h^2G^{11} \\ hv^2v^1 + \frac{g}{2}h^2G^{21}
\end{pmatrix}, \quad
\state{f}^2 \coloneqq \begin{pmatrix}
hv^2 \\ hv^1v^2 + \frac{g}{2}h^2G^{12} \\ hv^2v^2 + \frac{g}{2}h^2G^{22}
\end{pmatrix},
\end{equation}
and the Coriolis, bottom topography, and geometric source terms are given, respectively, by
\begin{equation}\label{eq:source_terms_standard}
\state{s}_{\mathrm{cor}} \coloneqq
\begin{pmatrix}
0 \\ -fJG^{1j}\varepsilon_{jk}hv^k \\ -fJG^{2j}\varepsilon_{jk}hv^k
\end{pmatrix}, \quad
\state{s}_{\mathrm{bot}} \coloneqq
\begin{pmatrix}
0 \\ -ghG^{1j}\partial_j \topography\\ -ghG^{2j}\partial_j \topography
\end{pmatrix}, \quad
\state{s}_{\mathrm{geo}} \coloneqq
\begin{pmatrix}
0 \\ -\Gamma_{jk}^1 \tau^{jk} \\ -\Gamma_{jk}^2 \tau^{jk}
\end{pmatrix}.
\end{equation}
This form provides a natural starting point for the construction of DG (see, for example, Läuter \etal \cite{laeuter_spherical_shallow_water_triangular_08}, Bao \etal \cite{bao_flux_form_dg_spherical_swe_14}, Kuang \etal \cite{kuang_evolution_dg_17}, and Baldauf \cite{baldauf_dg_shallow_water_covariant_20}) or finite-volume methods (see, for example, Rossmanith \etal \cite{rossmanith_wave_propagation_sphere_06} and Ullrich \etal \cite{ullrich_finite_volume_shallow_water_10}).

\subsection{Entropy analysis}

It is well known that continuously differentiable solutions to the shallow water equations satisfy an additional conservation law for the total energy per unit area $\entropy \coloneqq hv_iv^i/2 + gh(h+\topography)/2$, which serves as a mathematical \emph{entropy function} for the system (see, for example, Tadmor \cite{tadmor_skew_selfadjoint_84}). Taking the contravariant \emph{entropy flux} components to be $\entflux^j \coloneqq hv_iv^iv^j/2 + gh^2v^j + gh \topography v^j$, such a conservation law is given in covariant form as
\begin{equation}\label{eq:entropy}
\partial_t \entropy + \nabla_j \entflux^j = 0.
\end{equation}
The corresponding \emph{entropy variables} are then obtained by differentiating the entropy function with respect to the state variables as
\begin{equation}\label{eq:entropy_variables}
\entVar \coloneqq \frac{\partial \entropy} {\partial \state{u}} = \begin{pmatrix}
g(h+\topography) - \frac{1}{2}(v_1v^1 + v_2 v^2)\\
v_1 \\
v_2
\end{pmatrix}.
\end{equation}
While the development of a general theory for entropy analysis of hyperbolic systems on manifolds is beyond the scope of this paper (although we note that the scalar case has been treated, for example, by Ben-Artzi and LeFloch \cite{ben_artzi_lefloch_hyperbolic_conservation_laws_manifolds_07}), we will nevertheless verify that, under suitable assumptions, $\entropy$ is strictly convex with respect to the state vector consisting of the fluid layer depth and contravariant momentum components, as required for a mathematical entropy function.

\begin{lemma}\label{lem:strictly_convex}
The mapping from $\state{u}$ to $\entropy$ is strictly convex when $h > 0$ and $\mat{G}$ is SPD.
\end{lemma}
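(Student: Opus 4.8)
The plan is to prove strict convexity by showing that the Hessian $\partial^2\entropy/\partial\state{u}^2$ is symmetric positive definite (SPD) at every point of the domain $\{\state{u} : h > 0\}\subset\mathbb{R}^3$. Since this set is convex (it is a half-space) and $\entropy$ is $C^2$ there — $h$ staying bounded away from zero and $\mat{G}$ being a fixed SPD matrix at the given point of the manifold — pointwise positive definiteness of the Hessian implies strict convexity by the standard second-derivative criterion.

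First I would rewrite $\entropy$ purely in terms of the conserved state $\state{u} = (h,\, hv^1,\, hv^2)^\T$. Introducing $\mat{m}\coloneqq(hv^1, hv^2)^\T$ and using $v^i = m^i/h$ together with $v_iv^i = G_{ij}v^iv^j$, the kinetic term becomes $\tfrac12 h v_iv^i = \tfrac{1}{2h}\mat{m}^\T\mat{G}\mat{m}$, so that $\entropy = \tfrac{1}{2h}\mat{m}^\T\mat{G}\mat{m} + \tfrac{g}{2}h^2 + \ell(\state{u})$, where $\ell$ collects the topography contribution. Since $g$ and $\topography$ do not depend on $\state{u}$, $\ell$ is affine in $\state{u}$ and hence does not affect the Hessian; $\mat{G}$ is likewise to be held fixed, as it depends only on the point of the manifold. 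Differentiating twice then yields the $3\times3$ block matrix
\[
\frac{\partial^2\entropy}{\partial\state{u}^2} = \frac1h\begin{pmatrix} \mat{v}^\T\mat{G}\mat{v} + gh & -(\mat{G}\mat{v})^\T \\ -\mat{G}\mat{v} & \mat{G} \end{pmatrix}, \qquad \mat{v}\coloneqq\mat{m}/h.
\]

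To establish positive definiteness I would test this matrix against an arbitrary vector $\mat{z} = (a,\mat{b})^\T$ with $a\in\mathbb{R}$, $\mat{b}\in\mathbb{R}^2$, and complete the square: using the symmetry of $\mat{G}$,
\[
\mat{z}^\T\frac{\partial^2\entropy}{\partial\state{u}^2}\mat{z} = \frac1h\Bigl[(a\mat{v}-\mat{b})^\T\mat{G}(a\mat{v}-\mat{b}) + g h\, a^2\Bigr].
\]
Both terms are nonnegative because $h>0$, $g>0$, and $\mat{G}$ is SPD, and their sum vanishes only if $a=0$ and $a\mat{v}-\mat{b}=0$, i.e.\ only if $\mat{z}=0$; hence the Hessian is SPD and strict convexity follows. (Equivalently, one may observe that $\tfrac{1}{2h}\mat{m}^\T\mat{G}\mat{m}$ is the perspective of the convex quadratic $\mat{m}\mapsto\tfrac12\mat{m}^\T\mat{G}\mat{m}$, hence convex, with the strictness that is missing along the radial direction $(h,\mat{m})$ supplied by the potential term $\tfrac{g}{2}h^2$; a Schur-complement argument on the block form, whose Schur complement of the $(2,2)$-block $\mat{G}/h$ is exactly the constant $g$, gives the same conclusion.)

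I do not anticipate a genuine obstacle. The only points requiring care are the change of variables from $(h, v^i)$ to the conserved variables $(h, hv^i)$ — convexity with respect to $\state{u}$ being the statement needed for the subsequent entropy analysis — and keeping $\mat{G}$ and $\topography$ fixed when differentiating. The completing-the-square identity above is the single nonobvious step, and it is precisely what makes the positivity of the quadratic form transparent.
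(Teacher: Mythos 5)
Your proposal is correct and takes essentially the same route as the paper: both compute the same Hessian of $\entropy$ with respect to $\state{u}$ (the paper by differentiating the entropy variables, you directly) and establish that it is SPD under $h>0$ and $\mat{G}$ SPD. The paper verifies positive definiteness via the Schur complement of the lower-right block $\mat{G}/h$, which reduces to the positive scalar $g$ --- exactly the argument you give parenthetically --- while your completing-the-square computation is an equivalent elementary verification of the same fact.
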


\begin{proof}
Differentiating the entropy variables with respect to the conservative variables, we obtain the Hessian of the entropy function as
\begin{equation}\label{eq:entropy_hessian_covariant}
\frac{\partial \entVar}{\partial \state{u}} = \frac{1}{h}
\begin{pmatrix}
gh + v_1v^1 + v_2 v^2 & - v_1 & - v_2 \\
- v_1 & G_{11} & G_{12} \\
- v_2 & G_{21} & G_{22}
\end{pmatrix}.
\end{equation}
The lower-right two-by-two block of the above matrix is given by $\mat{G}/h$, which is SPD under the present assumptions, and its Schur complement corresponds to the positive scalar $g$. The matrix in \cref{eq:entropy_hessian_covariant} is therefore also SPD, which is equivalent to the strict convexity of $\entropy$ as a function of $\state{u}$.
\end{proof}

\begin{remark}
Due to the role of the total energy as a mathematical entropy function, we will often refer to the quantity $\entropy$ as the ``entropy'' for consistency with the literature on entropy-stable schemes. We will also assume throughout our analysis that the conditions of \cref{lem:strictly_convex} are satisfied regarding the positivity of the fluid layer depth and the positive-definiteness of the metric.
\end{remark}

Whereas continuously differentiable solutions conserve the integral of $\entropy$ within the domain $S$, physically admissible weak solutions satisfy an entropy inequality of the form
\begin{equation}\label{eq:entropy_inequality}
\frac{\mathrm{d}}{\mathrm{d} t}\int_S \entropy \, \mathrm{d}S \leq 0.
\end{equation}
Since establishing such integral balances for a system such as \cref{eq:pde_reference_space} requires the use of the product and chain rules in space, which are not guaranteed to hold discretely, we will begin our construction of an entropy-stable discretization with the introduction of a skew-symmetric splitting which allows for the product and chain rules to be circumvented.

\subsection{Skew-symmetric formulation}\label{sec:skew_symmetric}

Building upon the work of Gassner \etal \cite{gassner_winters_kopriva_shallow_water_16} and Wintermeyer \etal \cite{wintermeyer2017entropy} within the context of the shallow water equations in planar geometry, we prove the following lemma to obtain a skew-symmetric splitting of the covariant-form divergence of the momentum flux.

\begin{lemma}\label{lem:split_form}
For smooth solutions, the divergence of the momentum flux in \cref{eq:momentum_flux} can be expressed as
\begin{equation}\label{eq:split_covariant_derivative}
\begin{aligned}
\nabla_j \tau^{ij} & = \frac{1}{2J} \left(\partial_j(J hv^iv^j) + v^i \partial_j (Jhv^j) + G^{ik}Jhv^j\partial_j v_k\right) + gG^{ij}h\partial_jh \\
& \qquad + \frac{1}{2}\left(\Gamma_{jk}^i hv^jv^k - G^{ik}\Gamma_{jk}^l hv^j v_l \right).
\end{aligned}
\end{equation}
\end{lemma}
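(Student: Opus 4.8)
The plan is to decompose the momentum flux \cref{eq:momentum_flux} by linearity of the covariant derivative as $\nabla_j \tau^{ij} = \nabla_j(hv^iv^j) + \nabla_j\big(\tfrac{g}{2}h^2 G^{ij}\big)$ and to treat the advective and pressure contributions in turn. For the pressure term I would use the metric-compatibility condition \cref{eq:metric_compatibility}, $\nabla_j G^{ij}=0$, together with the fact that the covariant derivative reduces to the ordinary partial derivative on scalar fields; applying the product rule for $\nabla_j$ then gives $\nabla_j\big(\tfrac{g}{2}h^2 G^{ij}\big) = \tfrac{g}{2}G^{ij}\,\partial_j(h^2) = gG^{ij}h\,\partial_j h$, which is precisely the middle term of \cref{eq:split_covariant_derivative}.

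For the advective term, I would first invoke the identity \cref{eq:flux_divergence} (valid for any symmetric contravariant tensor) to write $\nabla_j(hv^iv^j) = \tfrac{1}{J}\partial_j(Jhv^iv^j) + \Gamma_{jk}^i hv^jv^k$, and then express $\tfrac{1}{J}\partial_j(Jhv^iv^j)$ as the average of itself and its product-rule expansion $\partial_j\big((Jhv^j)v^i\big) = v^i\,\partial_j(Jhv^j) + Jhv^j\,\partial_j v^i$. This produces $\tfrac{1}{2J}\big(\partial_j(Jhv^iv^j) + v^i\partial_j(Jhv^j) + Jhv^j\partial_j v^i\big)$. To match the first bracketed group in \cref{eq:split_covariant_derivative}, the remaining task is to replace $\partial_j v^i$ by $G^{ik}\partial_j v_k$; using $v^i = G^{ik}v_k$ and the product rule gives $\partial_j v^i = G^{ik}\partial_j v_k + v_k\,\partial_j G^{ik}$, so the advective contribution equals the desired first group of \cref{eq:split_covariant_derivative} plus a leftover $\tfrac{1}{2}hv^jv_k\,\partial_j G^{ik}$, which is to be combined with the Christoffel term $\Gamma_{jk}^i hv^jv^k$ already present.

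The main (purely bookkeeping) obstacle is then to show that $\tfrac{1}{2}hv^jv_k\,\partial_j G^{ik} + \Gamma_{jk}^i hv^jv^k$ collapses to the skew-symmetric Christoffel pair $\tfrac12\big(\Gamma_{jk}^i hv^jv^k - G^{ik}\Gamma_{jk}^l hv^jv_l\big)$. I would obtain this by writing out metric compatibility in the form $\partial_j G^{ik} = -\Gamma_{jl}^i G^{lk} - \Gamma_{jl}^k G^{il}$, contracting with $v^j v_k$, and using $v_k G^{lk}=v^l$ to get $v^jv_k\,\partial_j G^{ik} = -\Gamma_{jk}^i v^jv^k - G^{ik}\Gamma_{jk}^l v^jv_l$ after relabelling summation indices. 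Substituting this into the leftover term and adding the Christoffel contribution yields exactly the required expression; collecting the advective and pressure parts then reproduces \cref{eq:split_covariant_derivative}. Throughout, the only care needed is the consistent relabelling of repeated indices and repeated use of the symmetry of $\Gamma_{jk}^i$ in its two lower indices.
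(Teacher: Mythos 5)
Your proposal is correct and takes essentially the same route as the paper's proof: both write the quadratic momentum term as the average of its conservative (divergence) form $\tfrac{1}{J}\partial_j(Jhv^iv^j)+\Gamma^i_{jk}hv^jv^k$ and an advective (product-rule) form, and both eliminate the pressure-metric contribution using $\nabla_j G^{ij}=0$. The only difference is bookkeeping: you expand with ordinary partial derivatives and then convert the leftover $\tfrac{1}{2}hv^jv_k\,\partial_j G^{ik}$ into Christoffel terms via $\partial_j G^{ik}=-\Gamma^i_{jl}G^{lk}-\Gamma^k_{jl}G^{il}$, whereas the paper obtains the same Christoffel pair directly from the covariant product rule $\nabla_j v^i = G^{ik}(\partial_j v_k - \Gamma^l_{jk}v_l)$ — equivalent manipulations in a different order.
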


\begin{proof}
Distributing the covariant derivative across the convective and pressure terms of \cref{eq:momentum_flux} and noting that the covariant derivative of a scalar reduces to the partial derivative, we can use the product rule and chain rule to obtain
\begin{equation}\label{eq:divergence_separate_pressure_term}
\nabla_j\tau^{ij} = \nabla_j (hv^iv^j) + \frac{1}{2}gh^2\nabla_jG^{ij}+ ghG^{ij}\partial_jh,
\end{equation}
The first term on the right-hand side can then be expressed in two different ways as
\begin{equation}
\begin{aligned}
\nabla_j (hv^iv^j) &= \frac{1}{J}\partial_j(Jhv^iv^j) + \Gamma_{jk}^ihv^jv^k \\
&= \frac{1}{J}\left(v^i\partial_j(Jhv^j)+ G^{ik}Jhv^j\partial_jv_k - G^{ik}\Gamma_{jk}^lhv^jv_l \right),
\end{aligned}
\end{equation}
where the first line results from the use of the second identity in \cref{eq:flux_divergence}, while the second line results from the first identity in \cref{eq:flux_divergence} and several applications of the product rule. Averaging the two expressions and substituting the result into \cref{eq:divergence_separate_pressure_term}, we then note that the second term on the right-hand side of \cref{eq:divergence_separate_pressure_term} is zero due to the metric compatibility condition in \cref{eq:metric_compatibility}. Grouping the differential and algebraic terms therefore results in \cref{eq:split_covariant_derivative}.
\end{proof}

When contracting \cref{eq:split_covariant_derivative} with the covariant velocity components and manipulating the indices using \eqref{eq:raise_lower}, the geometric source terms cancel as
\begin{equation}\label{eq:cancel_christoffel}
v_i(\Gamma_{jk}^ihv^jv^k - G^{ik}\Gamma_{jk}^lhv^jv_l) = v_i \Gamma_{jk}^i hv^jv^k - v^k\Gamma_{jk}^l hv^j v_l = 0,
\end{equation}
resulting in an expression analogous to that obtained in flat (\ie Euclidean) space:
\begin{equation}
v_i \nabla_j \tau^{ij} = \frac{1}{2J} \left(v_i\partial_j(Jhv^iv^j) + v_iv^i \partial_j(Jhv^j) + Jhv^iv^j \partial_j v_i \right) + v^j gh\partial_jh,
\end{equation}
allowing for an integral entropy balance to be derived purely using integration by parts in reference space, a procedure which, as we will see in the following section, is straightforward to mimic at the semi-discrete level. We therefore rewrite the momentum equations in \cref{eq:momentum_conservative} as
\begin{equation}\label{eq:skew_symmetric_momentum}
\begin{aligned}
\partial_t (hv^i) +& \frac{1}{2J} \left(\partial_j(J hv^iv^j) + v^i \partial_j (Jhv^j) + G^{ik}Jhv^j\partial_j v_k\right) + gG^{ij}h\partial_j(h+\topography) \\ & \qquad = - fJG^{ij}\varepsilon_{jk}hv^k - \frac{1}{2}\left(\Gamma_{jk}^i hv^jv^k - G^{ik}\Gamma_{jk}^l hv^j v_l \right),
\end{aligned}
\end{equation}
which constitutes the \emph{skew-symmetric form} of the momentum balance. Together with the continuity equation \cref{eq:continuity_covariant}, this form of the equations will be used in the construction of the entropy-stable discretizations proposed in this work.

\begin{remark}
To the authors' knowledge, skew-symmetric formulations for nonlinear balance laws in covariant form have not been proposed prior to this work.
\end{remark}

\section{Discontinuous spectral-element methods}\label{sec:discretization}

In this section, we will introduce the main components of the spatial discretization framework adopted in this paper, which involves the application of flux-differencing discontinuous spectral-element methods using tensor-product SBP operators to the spatial discretization of balance laws in covariant form.

\subsection{Spherical quadrilateral mapping}\label{sec:mapping}

Let us assume that $S$ is a spherical shell of radius $a$, and consider a global Cartesian coordinate system with its origin at the centre of the sphere. The sphere is tessellated to construct a conforming mesh of curved quadrilateral elements, which can be arbitrary in topology. For each mesh element $\physelem \subset S$, we then let $\vec{x}_1, \vec{x}_2, \vec{x}_3, \vec{x}_4$ denote position vectors from the origin to four distinct points on $S$ defining the corners of a curved element bounded by segments of four great circles. If $\xi^1, \xi^2 \in [-1,1]$ denote coordinates on the reference quadrilateral $\refelem \subset \mathbb{R}^2$, the position vector from the origin to the corresponding point on $\physelem$ is then given by
\begin{equation}\label{eq:mapping}
\vec{X}(\xi^1,\xi^2) \coloneqq a\frac{\vec{x}_e(\xi^1,\xi^2)}{\lVert \vec{x}_e(\xi^1,\xi^2)\rVert},
\end{equation}
where, following Guba \etal \cite[Appendix~A]{guba_variable_resolution_sem_14}, we define the bilinear mapping
\begin{equation}
\begin{aligned}
\vec{x}_e(\xi^1,\xi^2) \coloneqq \frac{1}{4}\Big((1-\xi^1)(1-\xi^2)&\vec{x}_1+ (1+\xi^1)(1-\xi^2)\vec{x}_2 \\ + \, (1+\xi^1)(1+\xi^2)&\vec{x}_3 + (1-\xi^1)(1+\xi^2)\vec{x}_4 \Big)
\end{aligned}
\end{equation}
and note that a similar approach is taken in the case of triangular elements in \cite{laeuter_spherical_shallow_water_triangular_08} and \cite{baldauf_dg_shallow_water_covariant_20}. Differentiating \cref{eq:mapping} as in \cref{eq:basis_covariant}, the covariant basis vectors can then be computed analytically as
\begin{equation}
\covbasis_i(\xi^1,\xi^2) = \frac{a}{\lVert \vec{x}_e(\xi^1,\xi^2)\rVert}\left(\partial_i \vec{x}_e(\xi^1,\xi^2) - \frac{\vec{x}_e(\xi^1,\xi^2) \cdot \partial_i \vec{x}_e(\xi^1,\xi^2)}{\lVert \vec{x}_e(\xi^1,\xi^2)\rVert^2} \vec{x}_e(\xi^1,\xi^2) \right)
\end{equation}
and used to analytically compute the covariant metric components, contravariant metric components, contravariant basis vectors, and Christoffel symbols. We will commonly use $x$, $y$, and $z$ to denote the Cartesian coordinates of position vectors $\vec{x} = x\cartbasis_x + y\cartbasis_y + z\cartbasis_z$ relative to a right-handed coordinate system with its origin at the centre of the sphere. If the positive $x$ axis points towards a longitude of $\lambda =0$ and the positive $z$ axis points towards a latitude of $\theta = \pi/2$, the spherical coordinates corresponding to a position vector $\vec{x}$ can be computed in radians as $\lambda = \operatorname{arctan2}(y,x)$ and $\theta = \arcsin(z/a)$, where we note that $a^2 = \lVert\vec{x}\rVert^2 = x^2 +y^2 + z^2$ and that the two-argument arctangent function is used such that $-\pi \leq \lambda \leq \pi$. In contrast with the common practice of using a global spherical coordinate system to represent the covariant and contravariant basis vectors, we represent such bases using the aforementioned global Cartesian coordinate system. Tangent vectors such as the velocity or momentum can therefore be transformed between local contravariant and global Cartesian component representations as
\begin{equation}\label{eq:global_local_coordinates}
\begin{pmatrix}
v^1 \\
v^2
\end{pmatrix} =
\begin{pmatrix}
\cartbasis_x \cdot \contbasis^1 & \cartbasis_y \cdot \contbasis^1 & \cartbasis_z \cdot \contbasis^1 \\
\cartbasis_x \cdot \contbasis^2 & \cartbasis_y \cdot \contbasis^2 & \cartbasis_z \cdot \contbasis^2
\end{pmatrix}
\begin{pmatrix}
v_x\\
v_y \\
v_z
\end{pmatrix}, \quad
\begin{pmatrix}
v_x\\
v_y \\
v_z
\end{pmatrix} =
\begin{pmatrix}
\cartbasis_x \cdot \covbasis_1 & \cartbasis_x \cdot \covbasis_2 \\
\cartbasis_y \cdot \covbasis_1 & \cartbasis_y \cdot \covbasis_2 \\
\cartbasis_z \cdot \covbasis_1 & \cartbasis_z \cdot \covbasis_2
\end{pmatrix}
\begin{pmatrix}
v^1 \\
v^2
\end{pmatrix},
\end{equation}
where the above transformation matrices are well defined at all points on $S$. This avoids the issue of representing vectors which are nonzero at the poles with respect to a spherical basis.

\subsection{Spectral-element approximation}\label{sec:spectral_element}

To construct a spectral-element spatial discretization on the reference quadrilateral $\refelem$, we will approximate the state vector $\state{u}$ by a tensor-product polynomial expansion of the form
\begin{equation}
\state{u}(\vec{X}(\xi^1,\xi^2),t) \approx \sum_{i=0}^N\sum_{j=0}^N \state{u}_{ij}(t) \ell_i(\xi^1) \ell_j(\xi^2),
\end{equation}
where the \emph{Lagrange polynomials} corresponding to the nodes $\{\xi_i\}_{i=0}^N \subset [-1,1]$ are given by
\begin{equation}
\ell_i(\xi) \coloneqq \prod_{m=0, m\neq i}^N \frac{\xi - \xi_m}{\xi_i - \xi_m}, \quad i \in \{0, \ldots, N\},
\end{equation}
and $\state{u}_{ij}(t) \approx \state{u}(\vec{X}(\xi_i,\xi_j),t)$ are the vector-valued nodal coefficients. In this work, we employ the Legendre--Gauss--Lobatto (LGL) quadrature points, which include both endpoints of the reference interval, and correspond to positive quadrature weights, which we denote by $\{\omega_i\}_{i=0}^N$. We can then use such a quadrature rule to approximate integrals under the change of variables in \cref{eq:change_of_variables} as
\begin{equation}\label{eq:element_integral}
\int_\physelem h \, \mathrm{d}S \approx I_\physelem [h] \coloneqq \sum_{i,j=0}^N \omega_i \omega_j J_{ij} h_{ij}
\end{equation}
where the double subscript $(\cdot)_{ij}$ denotes a quantity evaluated at reference coordinates $(\xi_i, \xi_j)$. By summing \cref{eq:element_integral} over all elements, we can similarly approximate integrals over the global domain $S$ as
\begin{equation}\label{eq:global_integral}
\int_S h \, \mathrm{d}S \approx I_S[h] \coloneqq \sum_{\physelem \subset S} I_\physelem[h].
\end{equation}
Discrete integrals such as those in \cref{eq:element_integral} and \cref{eq:global_integral} will be used extensively in our theoretical analysis and numerical verification of the schemes developed in this paper.

\subsection{Summation-by-parts operators}

Adopting a zero-based indexing convention for discretization matrices, the entries of the collocation derivative matrix $\mat{D}$ and diagonal mass matrix $\mat{M}$ corresponding to the Lagrange polynomials $\{\ell_i\}_{i=0}^N$ and quadrature weights $\{\omega_i\}_{i=0}^N$ are given by
\begin{equation}\label{eq:sbp_matrices}
D_{ij} \coloneqq \left.\frac{\mathrm{d} \ell_j}{\mathrm{d} \xi}\right\rvert_{\xi = \xi_i}, \quad M_{ij} \coloneqq \omega_i \delta_{ij}, \quad i,j\in\{0, \ldots, N\}.
\end{equation}
Defining the matrices $\mat{Q} \coloneqq \mat{M} \mat{D}$ and $\mat{B} \coloneqq \operatorname{diag}(-1, 0, \ldots, 0, 1)$, it was shown by Gassner \cite{gassner_dgsem_sbp_13} that the SBP property
\begin{equation}\label{eq:SBPprop}
\mat{Q} + \mat{Q}^\T = \mat{B}
\end{equation}
is satisfied for collocated spectral-element operators based on LGL quadrature, mimicking the continuous integration-by-parts formula in one spatial dimension, which is given by
\begin{equation}
\int_{-1}^1 u \frac{\mathrm{d} v}{\mathrm{d} \xi}\, \mathrm{d}\xi + \int_{-1}^1 \frac{\mathrm{d} u}{\mathrm{d} \xi}v \, \mathrm{d}\xi = uv\Big\rvert_{-1}^1.
\end{equation}
We also note that the mass matrix $\mat{M}$ is guaranteed to be SPD due to the positivity of the LGL quadrature weights and that the following identity holds due to the exact differentiation of constant functions:
\begin{equation}\label{eq:rowsum}
\sum_{m=0}^N D_{im} = 0, \quad i \in \{0, \ldots, N\}.
\end{equation}
While we use collocated spectral-element operators in this work, the theory applies to any operators satisfying \cref{eq:SBPprop} and \cref{eq:rowsum}, including the finite-difference SBP schemes described, for example, in the review papers by Del Rey Fernández \etal\cite{delrey_sbp_sat_review_14} and Svärd and Nordström \cite{svard_nordstrom_sbpreview_14}.

\subsection{Flux-differencing discretization}\label{sec:flux_differencing}

The discretizations developed in this work will be formulated within the general framework of a nonconservative, variable-coefficient flux-differencing formulation, building on prior work by Renac \cite{renac_entropy_stable_nonconservative_19}, Waruszewski \etal \cite{waruszewski_entropy_stable_dg_euler_atmospheric_22}, and Rueda-Ramírez \etal \cite{ruedaramirez_gauss_collocation_mhd_23}. Defining the skew-symmetric matrix $\mat{S} \coloneqq 2\mat{Q}-\mat{B}$, a flux-differencing tensor-product spatial discretization is given by
\begin{equation}
\begin{aligned} \label{eq:dgsem}
\omega_i\omega_j J_{ij} \frac{\mathrm{d}}{\mathrm{d}t}\state{u}_{ij} &= \omega_j \left(-\sum_{m=0}^N S_{im}(J\state{f}^1)^{\#}_{ij,mj} +
\delta_{i0} (J\state{f}^1)^{*}_{0j,0j^+}
- \delta_{iN}(J\state{f}^1)^{*}_{Nj,Nj^+} \right) \\
&+ \omega_i \left(-\sum_{m=0}^N S_{jm}(J\state{f}^2)^{\#}_{ij,im} +
\delta_{j0} (J\state{f}^2)^{*}_{i0,i0^+}
- \delta_{jN} (J\state{f}^2)^{*}_{iN,iN^+} \right) + \omega_i\omega_j J_{ij}\state{s}_{ij},
\end{aligned}
\end{equation}
where we have suppressed the explicit dependence on time.
Here, \emph{flux differencing} refers to the use of two-point volume flux functions of the form
\begin{equation}
(J\state{f}^1)^{\#}_{L,R} \coloneqq (J\state{f}^1)^{\#}\big((\state{u},\mat{G},\topography)_L, (\state{u},\mat{G},\topography)_R\big), \quad (J\state{f}^2)^{\#}_{L,R} \coloneqq (J\state{f}^2)^{\#}\big((\state{u},\mat{G},\topography)_L, (\state{u},\mat{G},\topography)_R\big),
\end{equation}
as well as interface flux functions of the form
\begin{equation}
(J\state{f}^1)^{*}_{L,R} \coloneqq (J\state{f}^1)^{*}\big((\state{u},\mat{G},\topography)_L, (\state{u},\mat{G},\topography)_R\big), \quad (J\state{f}^2)^{*}_{L,R} \coloneqq (J\state{f}^2)^{*}\big((\state{u},\mat{G},\topography)_L, (\state{u},\mat{G},\topography)_R\big),
\end{equation}
which depend on the solution $\state{u}$ as well as the metric $\mat{G}$ and bottom topography $\topography$ at arbitrary ``left'' and ``right'' states $(\cdot)_L$ and $(\cdot)_R$, and are not necessarily symmetric in their two arguments. For example, $(J\state{f}^1)^{\#}_{ij,mj} = (J\state{f}^1)^{\#}((\state{u},\mat{G},\topography)_{ij}, (\state{u},\mat{G},\topography)_{mj})$ denotes a contravariant volume flux in the $\xi^1$ direction evaluated between the nodes $(\xi_i,\xi_j)$ and $(\xi_m,\xi_j)$, and is not necessarily equal to $(J\state{f}^1)^{\#}_{mj,ij}$.
Any index with a ``$+$'' superscript denotes an external state transformed from an adjacent element, where the local-to-global and global-to-local transformations in \cref{eq:global_local_coordinates} are combined to form the matrix
\begin{equation}\label{eq:transformation_matrix}
\mat{A}_{R\to L} \coloneqq
\begin{pmatrix}
\cartbasis_x \cdot \contbasis^1 & \cartbasis_y \cdot \contbasis^1 & \cartbasis_z \cdot \contbasis^1 \\
\cartbasis_x \cdot \contbasis^2 & \cartbasis_y \cdot \contbasis^2 & \cartbasis_z \cdot \contbasis^2
\end{pmatrix}_L
\begin{pmatrix}
\cartbasis_x \cdot \covbasis_1 & \cartbasis_x \cdot \covbasis_2 \\
\cartbasis_y \cdot \covbasis_1 & \cartbasis_y \cdot \covbasis_2 \\
\cartbasis_z \cdot \covbasis_1 & \cartbasis_z \cdot \covbasis_2
\end{pmatrix}_R,
\end{equation}
which transforms the contravariant representation of the momentum vector with respect to the local coordinate system at $(\cdot)_R$ into the corresponding local coordinate system at $(\cdot)_L$.

\begin{remark}\label{rmk:transform_to_interior}
To simplify our notation and analysis, we will assume that for all interface fluxes, the state vectors on both sides have been transformed so as to include momentum components with respect to the coordinate system associated with the first subscript, corresponding to the interior state, and all geometric quantities used in the flux are evaluated on that side of the interface.
\end{remark}

Using the SBP property in \cref{eq:SBPprop} as well as the row-sum identity in \cref{eq:rowsum}, the weak-form flux-differencing discretization in \cref{eq:dgsem} can be rewritten in strong form as
\begin{equation}\label{eq:dgsem_strong}
\begin{aligned}
\frac{\mathrm{d}}{\mathrm{d}t}\state{u}_{ij} = &-\frac{1}{J_{ij}} \sum_{m=0}^N 2D_{im}(J\state{f}^1)^{\#}_{ij,mj} -
\frac{1}{J_{ij}}\sum_{m=0}^N 2D_{jm}(J\state{f}^2)^{\#}_{ij,im} + \state{s}_{ij}\\
&+\frac{\delta_{i0}}{J_{ij}\omega_i\omega_j}\left[(J\state{f}^1)^{*}_{0j,0j^+} - (J\state{f}^1)_{0j}\right]
- \frac{\delta_{iN}}{J_{ij}\omega_i\omega_j} \left[(J\state{f}^1)^{*}_{Nj,Nj^+} - (J\state{f}^1)_{Nj} \right] \\
 &+ \frac{\delta_{j0}}{J_{ij}\omega_i\omega_j}\left[(J\state{f}^2)^{*}_{i0,i0^+} - (J\state{f}^2)_{i0}\right]
- \frac{\delta_{jN}}{J_{ij}\omega_i\omega_j} \left[(J\state{f}^2)^{*}_{iN,iN^+} - (J\state{f}^2)_{iN}\right],
\end{aligned}
\end{equation}
where we have assumed that two-point flux functions are consistent with \cref{eq:balance_law_system}
such that $(J\state{f})^{\#}_{ij,ij} = (J\state{f})^{*}_{ij,ij} = (J\state{f})_{ij}$. The first line of \cref{eq:dgsem_strong} can then be interpreted as a discretization of the governing equations on a given element, while the second and third lines consist of penalty terms (often referred to as \emph{simultaneous approximation terms} in the SBP literature, following Carpenter \etal \cite{carpenter_time_stable_bcs_94}) which weakly impose the coupling between adjacent elements. Although the weak form in \cref{eq:dgsem} yields a simpler and more efficient implementation and is more easily amenable to entropy analysis, the strong form will be useful in demonstrating consistency and well balancing of the proposed discretizations. Finally, we note that in the specific case that the two-point fluxes are taken to be
\begin{equation}\label{eq:arithmetic_mean_flux}
(J\state{f}^1)^{\#}_{L,R} \coloneqq \frac{1}{2} \left[(J\state{f}^1)_L + (J\state{f}^1)_R \right], \quad (J\state{f}^2)^{\#}_{L,R} \coloneqq \frac{1}{2} \left[(J\state{f}^2)_L + (J\state{f}^2)_R \right],
\end{equation}
and the source term is taken as $\state{s} \coloneqq \state{s}_{\mathrm{cor}} + \state{s}_{\mathrm{bot}} + \state{s}_{\mathrm{geo}}$,
the resulting scheme is equivalent to a standard tensor-product DG method based on collocated LGL quadrature, which is given in weak form by
\begin{equation}
\begin{aligned} \label{eq:dg_standard}
\omega_i\omega_j J_{ij} \frac{\mathrm{d}}{\mathrm{d}t}\state{u}_{ij} &= \omega_j \left(\sum_{m=0}^N Q_{mi}(J\state{f}^1)_{mj} +
\delta_{i0} (J\state{f}^1)^{*}_{0j,0j^+}
- \delta_{iN}(J\state{f}^1)^{*}_{Nj,Nj^+} \right) \\
&+ \omega_i \left(\sum_{m=0}^N Q_{mj}(J\state{f}^2)_{im} +
\delta_{j0} (J\state{f}^2)^{*}_{i0,i0^+}
- \delta_{jN} (J\state{f}^2)^{*}_{iN,iN^+} \right) + \omega_i\omega_j J_{ij}\state{s}_{ij}.
\end{aligned}
\end{equation}
In the following section, we will construct novel formulations of the volume flux, interface flux, and source term so as to ensure consistency, mass conservation, entropy stability, and well balancing of the resulting spatial discretization.

\section{Entropy-stable covariant formulation}\label{sec:entropy_stable_covariant}

Considering the semi-discrete entropy balance on a single element, we can use \cref{eq:element_integral} and apply the chain rule in time to obtain
\begin{equation}\label{eq:entropy_integral}
\frac{\mathrm{d}}{\mathrm{d} t}\int_{\physelem} \entropy \, \mathrm{d}S \approx \frac{\mathrm{d}}{\mathrm{d} t} I_\physelem[\entropy]
=
\sum_{i,j=0}^N \omega_i\omega_j J_{ij}\entVar_{ij}^\T \frac{\mathrm{d}}{\mathrm{d}t} \state{u}_{ij},
\end{equation}
where $\entVar_{ij}$ denotes the vector of entropy variables in \cref{eq:entropy_variables} evaluated in terms of the discrete nodal coefficients $\state{u}_{ij}$. In this section, we will develop an approach for constructing flux-differencing discretizations which satisfy a semi-discrete form of \cref{eq:entropy_inequality} by ensuring that the time derivative of $I_\physelem[\entropy]$ is bounded from above by telescoping interface terms, leading to a semi-discrete bound on the entropy integrated numerically over the global domain $S$. We will also show that the resulting discretizations are conservative of mass and well balanced for arbitrary continuous bottom topographies, preserving a constant surface height and zero velocity.

\subsection{Entropy-conservative and entropy-stable two-point flux functions}

Our next step is to discretize the skew-symmetric formulation proposed in \cref{sec:skew_symmetric} within the flux-differencing framework introduced in \cref{sec:flux_differencing}. The following lemma presents a two-point flux, which, when used with an appropriate formulation of the source term, accomplishes this task.

\begin{lemma}\label{lem:consistent_flux_covariant}
Taking the volume flux and interface flux terms in \cref{eq:dgsem} to be
\begin{equation}\label{eq:flux_ec_covariant}
\begin{aligned}
(J\state{f}^j)^{\#}_{L,R}
\coloneqq
&\begin{pmatrix}
\frac{1}{2}\big[(Jhv^j)_L + (Jhv^j)_R\big] \\
\frac{1}{4}\big[(Jhv^jv^1)_L + (Jhv^jv^1)_R + (Jhv^j)_R(v^1)_L + (G^{1k}Jhv^j)_L(v_k)_R\big]
\\
\frac{1}{4}\big[(Jhv^jv^2)_L + (Jhv^jv^2)_R + (Jhv^j)_R(v^2)_L + (G^{2k}Jhv^j)_L(v_k)_R\big]
\end{pmatrix}
\\
+ &\begin{pmatrix}
0 \\ \frac{g}{2}(G^{1j}Jh)_Lh_R \\ \frac{g}{2}(G^{2j}Jh)_Lh_R
\end{pmatrix}
+ \begin{pmatrix}
0 \\ \frac{g}{2}(G^{1j}Jh)_L(\topography_R - \topography_L) \\ \frac{g}{2}(G^{2j}Jh)_L(\topography_R - \topography_L)
\end{pmatrix}
\end{aligned}
\end{equation}
results in a consistent discretization of \cref{eq:pde_reference_space} when the source term is given by
\begin{equation}\label{eq:source_term_ec}
\state{s} \coloneqq
\begin{pmatrix}
0 \\
-\frac{1}{2}\big(\Gamma_{jk}^1 hv^jv^k - G^{1k}\Gamma_{jk}^l hv^j v_l \big) - f JG^{1j}\varepsilon_{jk} hv^k \\
-\frac{1}{2}\big(\Gamma_{jk}^2 hv^jv^k - G^{2k}\Gamma_{jk}^l hv^j v_l \big) - f JG^{2j}\varepsilon_{jk} hv^k
\end{pmatrix}.
\end{equation}
\end{lemma}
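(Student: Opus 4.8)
The plan is to establish consistency in two stages: first, that the two-point volume and interface fluxes in \cref{eq:flux_ec_covariant} reduce to the physical fluxes of \cref{eq:balance_law_system} on the diagonal; and second, that the flux-differencing volume terms in the strong form \cref{eq:dgsem_strong}, together with the pointwise source \cref{eq:source_term_ec}, form a consistent approximation of the skew-symmetric momentum balance \cref{eq:skew_symmetric_momentum} and the continuity equation \cref{eq:continuity_covariant}. Since \cref{lem:split_form} shows that \cref{eq:skew_symmetric_momentum,eq:continuity_covariant} are equivalent to \cref{eq:pde_reference_space} for smooth solutions, this yields the claim, once we also check that the interface penalty terms in \cref{eq:dgsem_strong} vanish when evaluated on a single-valued exact solution.

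For the first stage, I would set $(\cdot)_L = (\cdot)_R$ in \cref{eq:flux_ec_covariant}. In the momentum components, the asymmetric cross term $(G^{ik}Jhv^j)_L(v_k)_R$ collapses to $Jhv^jv^i$ by the index-raising identity $G^{ik}v_k = v^i$ from \cref{eq:raise_lower}, so the convective part reduces to $Jhv^iv^j$; the term $\tfrac{g}{2}(G^{ij}Jh)_Lh_R$ reduces to $\tfrac{g}{2}Jh^2G^{ij}$; and the topography-difference term vanishes identically. The mass component is already an arithmetic mean and reduces to $Jhv^j$. Hence $(J\state{f}^j)^{\#}_{L,L} = (J\state{f}^j)_L$, and the same holds for the interface flux, which shares this functional form, consistent with \cref{eq:states_and_fluxes}.

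For the second stage — the core computation — I would substitute the structure of \cref{eq:flux_ec_covariant} into the volume terms $-\tfrac{1}{J_{ij}}\sum_m 2D_{im}(J\state{f}^1)^{\#}_{ij,mj}$ and $-\tfrac{1}{J_{ij}}\sum_m 2D_{jm}(J\state{f}^2)^{\#}_{ij,im}$ of \cref{eq:dgsem_strong} and split each contribution into a part whose factors are all evaluated at the differentiated node (the ``left'' state, frozen with respect to the summation index) and a part linear in data at the summation node (the ``right'' state). The frozen parts are annihilated by the row-sum identity \cref{eq:rowsum}, while each remaining part reduces, up to the usual spectral truncation error, to the corresponding factor evaluated at the node times a collocation derivative $\sum_m D_{im}(\cdot)_{mj}$ or $\sum_m D_{jm}(\cdot)_{im}$, which is a consistent approximation of $\partial_1(\cdot)$ or $\partial_2(\cdot)$. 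Carrying this out, the mass component gives $\tfrac{1}{J}\partial_j(Jhv^j)$; the convective momentum components give $\tfrac{1}{2J}\bigl(\partial_j(Jhv^iv^j) + v^i\partial_j(Jhv^j) + G^{ik}Jhv^j\partial_j v_k\bigr)$; the term $\tfrac{g}{2}(G^{ij}Jh)_Lh_R$ gives $gG^{ij}h\partial_j h$; and, crucially, the topography term $\tfrac{g}{2}(G^{ij}Jh)_L(\topography_R - \topography_L)$ gives $gG^{ij}h\partial_j\topography$, since the $-\topography_L$ piece is frozen and killed by \cref{eq:rowsum}. Combining the last two yields the pressure-gradient term $gG^{ij}h\partial_j(h+\topography)$ of \cref{eq:skew_symmetric_momentum}, and adding the pointwise source \cref{eq:source_term_ec}, which reproduces exactly the right-hand side $-fJG^{ij}\varepsilon_{jk}hv^k - \tfrac{1}{2}(\Gamma_{jk}^i hv^jv^k - G^{ik}\Gamma_{jk}^l hv^j v_l)$ of \cref{eq:skew_symmetric_momentum}, we recover a consistent discretization of \cref{eq:skew_symmetric_momentum,eq:continuity_covariant}, hence of \cref{eq:pde_reference_space} via \cref{lem:split_form}.

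Finally, I would observe that on a smooth exact solution the interface penalty terms in \cref{eq:dgsem_strong} vanish: by \cref{rmk:transform_to_interior} the neighbour state is expressed in the interior coordinate frame through the transformations in \cref{eq:global_local_coordinates,eq:transformation_matrix}, which map a single-valued physical velocity field to the interior contravariant components, so the interface flux — being consistent — equals $(J\state{f}^j)_{ij}$ at the shared node and cancels the corresponding boundary correction. The main obstacle I anticipate is essentially bookkeeping: identifying term by term in \cref{eq:flux_ec_covariant} which factors are ``frozen'' at the differentiated node and which vary over the summation index, so that \cref{eq:rowsum} is applied correctly — the topography-difference term and the asymmetric cross term $(G^{ik}Jhv^j)_L(v_k)_R$ being the two places where this needs the most care.
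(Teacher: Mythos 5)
Your proposal is correct and follows essentially the same route as the paper's proof: pass to the strong form \cref{eq:dgsem_strong}, note the interface penalties vanish for globally continuous solutions, expand the flux-differencing volume terms so that the ``frozen'' contributions are annihilated by \cref{eq:rowsum}, identify the remainder as collocation-derivative approximations of the skew-symmetric form \cref{eq:skew_symmetric_momentum} (with the $-\topography_L$ piece of the topography term killed exactly as you describe), and conclude via \cref{lem:split_form}. Your explicit check that $(J\state{f}^j)^{\#}_{L,L}=(J\state{f}^j)_L$ via $G^{ik}v_k=v^i$ is a detail the paper leaves implicit (it is assumed in deriving \cref{eq:dgsem_strong}), and including it is a welcome, harmless addition rather than a different approach.
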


\begin{proof}
Considering the equivalent strong form in \cref{eq:dgsem_strong}, we first note that when all interface states are expressed consistently within a given local coordinate system, the flux differences which constitute the surface terms vanish for globally continuous solutions. Similarly to \cite[Lemma 1]{gassner_winters_kopriva_splitform_dg_sbp_16}, the flux-differencing volume terms can be rewritten as
\begin{align}\label{eq:strong_form_diff_1}
\sum_{m=0}^N 2D_{im}(J\state{f}^1)^{\#}_{ij,mj} &=\sum_{m=0}^N
\begin{pmatrix}
D_{im}(Jhv^1)_{mj} \\
\frac{1}{2}\big[D_{im}(Jhv^1v^1)_{mj} + (v^1)_{ij} D_{im}(Jhv^1)_{mj} + (G^{1k}Jhv^1)_{ij}D_{im}(v_k)_{mj}\big] \\
\frac{1}{2}\big[D_{im}(Jhv^1v^2)_{mj} + (v^2)_{ij} D_{im}(Jhv^1)_{mj} + (G^{2k}Jhv^1)_{ij}D_{im}(v_k)_{mj}\big] \\
\end{pmatrix} \notag\\
&+ \sum_{m=0}^N\begin{pmatrix}
0\\ g(JG^{11}h)_{ij}D_{im}(h + \topography)_{mj} \\
g(JG^{21}h)_{ij}D_{im}(h + \topography)_{mj}
\end{pmatrix}
\end{align}
and
\begin{align}\label{eq:strong_form_diff_2}
\sum_{m=0}^N 2D_{jm}(J\state{f}^2)^{\#}_{ij,im} &=\sum_{m=0}^N
\begin{pmatrix}
D_{jm}(Jhv^2)_{im} \\
\frac{1}{2}\big[D_{jm}(Jhv^2v^1)_{im} + (v^1)_{ij} D_{jm}(Jhv^2)_{im} + (G^{1k}Jhv^2)_{ij}D_{jm}(v_k)_{im}\big] \\
\frac{1}{2}\big[D_{jm}(Jhv^2v^2)_{im} + (v^2)_{ij} D_{jm}(Jhv^2)_{im} + (G^{2k}Jhv^2)_{ij}D_{jm}(v_k)_{im}\big] \\
\end{pmatrix} \notag \\
&+ \sum_{m=0}^N\begin{pmatrix}
0\\ g(JG^{12}h)_{ij}D_{jm}(h + \topography)_{im} \\ g(JG^{22}h)_{ij}D_{jm}(h + \topography)_{im}
\end{pmatrix},
\end{align}
where we have used the fact that terms of the form $D_{im}(\cdot)_{ij}$ and $D_{jm}(\cdot)_{ij}$ do not contribute to sums over $m$ as a consequence of \cref{eq:rowsum}. After dividing by $J_{ij}$, we then see that the right-hand sides of \cref{eq:strong_form_diff_1,eq:strong_form_diff_2} correspond directly to the differential operators appearing in the skew-symmetric formulation, with the partial derivatives in \cref{eq:mass_conservative,eq:skew_symmetric_momentum} replaced by nodal approximations using the SBP derivative matrix in \cref{eq:sbp_matrices}. Noting that the momentum source terms in \cref{eq:source_term_ec} are identical to those appearing on the right-hand side of \cref{eq:skew_symmetric_momentum}, the result therefore follows from \cref{lem:split_form} and the consistency of the spectral-element derivative approximation.
\end{proof}

\begin{remark}
In the special case of the Euclidean metric $G_{ij} = \delta_{ij}$, the two-point flux in \cref{eq:flux_ec_covariant} is the same as that proposed in \cite[Eq.\ (4.3)]{wintermeyer2017entropy} for the shallow water equations in planar geometry. We also note that in addition to making use of split-form approximation of the covariant derivative, the bottom topography term $gG^{ij}\partial_j\topography/2$ is treated as a nonconservative contribution to the two-point flux in
\cref{eq:flux_ec_covariant} rather than as a pointwise source term.
\end{remark}

Next, we present the following lemma establishing that the proposed flux is \emph{entropy conservative} in the sense that it satisfies a particular \emph{shuffle condition}, also known as a \emph{non-symmetric Tadmor condition} (borrowing terminology from Chan \etal \cite{chan_entropy_stable_quasi_1d_24} with reference to \cite{tadmor_entropy_stable_fv_87}), which we will employ in the following subsection to establish entropy stability of the resulting scheme.

\begin{lemma}\label{lem:ec_flux_covariant}
The two-point flux in \cref{eq:flux_ec_covariant} is entropy conservative, satisfying the shuffle condition
\begin{equation}\label{eq:nonsymmetric_tadmor}
\entVar_R^\T
(J\state{f}^j)^{\#}_{R,L} -
\entVar_L^\T
(J\state{f}^j)^{\#}_{L,R}
= (J\varPsi^j)_R - (J\varPsi^j)_L,
\end{equation}
where the contravariant flux potential components are given by $\varPsi^j \coloneqq \entVar^\T\state{f}^j - \entflux^j =
gh^2v^j/2$.
\end{lemma}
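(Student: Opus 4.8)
The plan is to establish the shuffle condition \cref{eq:nonsymmetric_tadmor} by a direct computation, contracting the entropy variables \cref{eq:entropy_variables} with the two-point flux \cref{eq:flux_ec_covariant} on each side and showing that the antisymmetric combination on the left telescopes into the flux-potential difference on the right. First I would write $\entVar = \big(g(h+\topography) - \tfrac{1}{2} v_iv^i,\; v_1,\; v_2\big)^\T$ and expand $A \coloneqq \entVar_L^\T (J\state{f}^j)^{\#}_{L,R}$ by treating the three structural blocks of \cref{eq:flux_ec_covariant} separately: the convective/kinetic block, the pressure block $\tfrac{g}{2}(G^{ij}Jh)_L h_R$, and the topography block $\tfrac{g}{2}(G^{ij}Jh)_L(\topography_R - \topography_L)$. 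Using only the index-raising identity $(v_i)_L (G^{ik})_L = (v^k)_L$, the pressure and topography blocks contract down to $\tfrac{g}{2}(Jhv^j)_L h_R$ and $\tfrac{g}{2}(Jhv^j)_L(\topography_R - \topography_L)$; the $\topography_L$ piece of the latter cancels the $g\topography_L$ contribution coming from the first entropy variable, so that the bottom-topography terms do not obstruct entropy conservation.

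The next step is to collect the terms proportional to the specific kinetic energy $K_L \coloneqq \tfrac{1}{2} (v_iv^i)_L$. The first entropy variable contributes $-K_L \cdot \tfrac{1}{2}\big[(Jhv^j)_L + (Jhv^j)_R\big]$, while the convective block contributes exactly $+\tfrac{1}{2} K_L (Jhv^j)_L + \tfrac{1}{2} K_L (Jhv^j)_R$ (from the $(Jhv^jv^i)_L$ and $(Jhv^j)_R(v^i)_L$ terms), and these cancel identically. What survives in $A$ is a sum of $g$-pressure products that reorganizes into $\tfrac{g}{2}(Jh^2v^j)_L + \tfrac{g}{2}(h_L+\topography_L)(Jhv^j)_R + \tfrac{g}{2}(Jhv^j)_L(h_R+\topography_R)$, together with two mixed kinetic cross-terms of the form $\tfrac{1}{4}(Jhv^j)_R\,(v_i)_L(v^i)_R$ and $\tfrac{1}{4}(Jhv^j)_L\,(v^k)_L(v_k)_R$. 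Obtaining $B \coloneqq \entVar_R^\T (J\state{f}^j)^{\#}_{R,L}$ then requires no new algebra: it is the image of $A$ under the relabelling $L \leftrightarrow R$. In the difference $B - A$, the symmetric $g$-pressure products cancel pairwise, leaving precisely $\tfrac{g}{2}(Jh^2v^j)_R - \tfrac{g}{2}(Jh^2v^j)_L = (J\varPsi^j)_R - (J\varPsi^j)_L$ with $\varPsi^j = gh^2v^j/2$, which is the claim; in the special case $G_{ij} = \delta_{ij}$ this reduces to the Euclidean computation underlying \cite{wintermeyer2017entropy}.

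I expect the one genuinely delicate point — and the main obstacle — to be the bookkeeping of the two mixed kinetic cross-terms. Because $L$ and $R$ are distinct nodes, the contraction $(v_i)_L(v^i)_R$, in which the index is raised using the metric at $L$, is in general \emph{not} equal to $(v^i)_L(v_i)_R$, in which it is raised at $R$; hence the convective part of the flux cannot be treated as symmetric, and these cross-terms need not individually vanish or simplify. The resolution is to observe that the relabelling $L \leftrightarrow R$ merely swaps the two scalar sums, $(v_i)_R(v^i)_L = (v^k)_L(v_k)_R$ and $(v^k)_R(v_k)_L = (v_i)_L(v^i)_R$, so that the two cross-terms appearing in $A$ reappear verbatim in $B$ and drop out of $B - A$ without ever being evaluated. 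Once this observation is in hand, the remainder is routine tracking of which nodal values carry the $L$ or the $R$ label, and no appeal to metric compatibility or positive-definiteness is needed; for interface fluxes the same algebraic identity applies after the transformation of \cref{rmk:transform_to_interior}, since \cref{eq:nonsymmetric_tadmor} is a pointwise algebraic relation in the flux and entropy variables.
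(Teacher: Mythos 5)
Your proposal is correct and follows essentially the same route as the paper's proof: a direct algebraic expansion of $\entVar_R^\T(J\state{f}^j)^{\#}_{R,L}-\entVar_L^\T(J\state{f}^j)^{\#}_{L,R}$ with pairwise cancellation of the kinetic, pressure, and topography terms, relying only on index raising at the correct node and never on metric compatibility. Your organization (simplifying each side first, obtaining the second contraction by the $L\leftrightarrow R$ relabelling, and observing that the two mixed cross-terms $(v_i)_L(v^i)_R$ and $(v^i)_L(v_i)_R$ swap rather than coincide) is just a tidier bookkeeping of the same cancellations the paper labels $(A)$ through $(H)$, including the separate treatment of the bottom-topography contributions.
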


\begin{proof}
If we first consider the case of $\topography = 0$ and recall the definition of the entropy variables in \cref{eq:entropy_variables}, the left-hand side of \cref{eq:nonsymmetric_tadmor} can be expanded term-by-term as
\begin{equation}\label{eq:entropy_expanded}
\begin{aligned}
\entVar_R^\T
(J\state{f}^j)^{\#}_{R,L} &-
\entVar_L^\T
(J\state{f}^j)^{\#}_{L,R}
= \underbrace{\frac{g}{2}h_R (Jhv^j)_L}_{(A)}- \frac{g}{2}h_L (Jhv^j)_L
+ \frac{g}{2}h_R(Jhv^j)_R - \underbrace{\frac{g}{2}h_L(Jhv^j)_R}_{(B)}
\\ &- \underbrace{\frac{1}{4}(v_iv^i)_R(Jhv^j)_L}_{(C)} + \underbrace{\frac{1}{4}(v_iv^i)_L (Jhv^j)_L}_{(D)}
- \underbrace{\frac{1}{4}(v_iv^i)_R(Jhv^j)_R}_{(E)} + \underbrace{\frac{1}{4}(v_iv^i)_L(Jhv^j)_R}_{(F)}
\\ &+ \underbrace{\frac{1}{4}(v_i)_R(Jhv^iv^j)_R}_{(E)} - \underbrace{\frac{1}{4}(v_i)_L(Jhv^iv^j)_L}_{(D)}
 + \underbrace{\frac{1}{4}(v_i)_R (Jhv^iv^j)_L}_{(G)} - \underbrace{\frac{1}{4}(v_i)_L (Jhv^iv^j)_R}_{(H)}
\\ &+ \underbrace{\frac{1}{4}(v_i)_R(Jhv^j)_L(v^i)_R}_{(C)} - \underbrace{\frac{1}{4}(v_i)_L(Jhv^j)_R (v^i)_L}_{(F)} + \underbrace{\frac{1}{4}(v_i)_R(G^{ik}Jhv^j)_R(v_k)_L}_{(H)}
\\ &- \underbrace{\frac{1}{4}(v_i)_L(G^{ik}Jhv^j)_L(v_k)_R}_{(G)} + \underbrace{\frac{g}{2}(v_i)_R(G^{ij}Jh)_R h_L}_{(B)} - \underbrace{\frac{g}{2}(v_i)_L(G^{ij}Jh)_Lh_R}_{(A)}.
\end{aligned}
\end{equation}
Noting the cancellation of pairs labelled $(A)$ through $(H)$, we see that the remaining terms constitute the right-hand side of \cref{eq:nonsymmetric_tadmor}. Next, incorporating the influence of the bottom topography $\topography$ within the entropy variables as well as the two-point fluxes, we obtain
\begin{equation}
\begin{aligned}
\entVar_R^\T
(J\state{f}^j)^{\#}_{R,L} -
\entVar_L^\T
(J\state{f}^j)^{\#}_{L,R}
&= (J\varPsi^j)_R - (J\varPsi^j)_L + \underbrace{\frac{g}{2}\topography_R (Jhv^j)_L}_{(A)}
- \underbrace{\frac{g}{2}\topography_L (Jhv^j)_L}_{(B)} + \underbrace{\frac{g}{2}\topography_R(Jhv^j)_R}_{(C)} \\ &- \underbrace{\frac{g}{2}\topography_L(Jhv^j)_R}_{(D)} + \underbrace{\frac{g}{2}(v_i)_R(G^{ij}Jh)_R \topography_L}_{(D)} - \underbrace{\frac{g}{2}(v_i)_L(G^{ij}Jh)_L\topography_R}_{(A)} \\ &- \underbrace{\frac{g}{2}(v_i)_R(G^{ij}Jh)_R \topography_R}_{(C)} + \underbrace{\frac{g}{2}(v_i)_L(G^{ij}Jh)_L\topography_L}_{(B)},
\end{aligned}
\end{equation}
where, similarly to \cref{eq:entropy_expanded}, the pairs labelled $(A)$ through $(D)$ cancel, again resulting in \cref{eq:nonsymmetric_tadmor} when the effect of variable bottom topography is included.
\end{proof}

At element interfaces, we augment the entropy-conservative flux in \cref{eq:flux_ec_covariant} with local Lax--Friedrichs dissipation applied to the jump in the conservative variables in order to obtain
\begin{equation}\label{eq:flux_ec_llf}
(J\state{f}^j)^{*}_{L,R} \coloneqq (J\state{f}^j)^{\#}_{L,R} - \frac{J_L}{2}\max(\lambda_L^j, \lambda_R^j)(\state{u}_R - \state{u}_L),
\end{equation}
where the contravariant wave speed estimate is computed by evaluating the spectral radius of the flux Jacobian $\partial \state{f}^j/\partial\state{u}$ as
\begin{equation}\label{eq:wave_speeds}
\lambda^j \coloneqq \big\lvert v^j \big\rvert + \sqrt{ghG^{jj}}
\end{equation}
for which derivations can be found, for example, in \cite[Appendix~D]{baldauf_dg_shallow_water_covariant_20} and \cite[Appendix~E]{gaudreault_high_order_shallow_water_cubed_sphere_22}. Following Ranocha's analysis in \cite[Section~6.1]{ranocha_comparison_entropy_conservative_fluxes_18}, one can show that the resulting numerical interface flux in \cref{eq:flux_ec_llf} is \emph{entropy stable}, satisfying
\begin{equation}\label{eq:nonsymmetric_tadmor_dissipative}
\entVar_R^\T
(J\state{f}^j)^{*}_{R,L} -
\entVar_L^\T
(J\state{f}^j)^{*}_{L,R} \leq (J\varPsi^j)_R - (J\varPsi^j)_L
\end{equation}
when $h >0$ and the bottom topography $\topography$ is continuous across element interfaces.

\begin{remark}
To show that an entropy-conservative two-point flux augmented with a standard local Lax--Friedrichs dissipation term is entropy stable, Ranocha relies on the Fundamental Theorem of Calculus in \cite[Eq.~(68)]{ranocha_comparison_entropy_conservative_fluxes_18} to express the jump in the conservative variables across an element interface as
\begin{equation}\label{eq:path_integral}
\state{u}_R - \state{u}_L = \left(\int_0^1 \frac{\partial \state{u}}{\partial \entVar} \big((1-\sigma)\entVar_L + \sigma\entVar_R \big)\, \mathrm{d}\sigma \right) (\entVar_R - \entVar_L),
\end{equation}
so as to ensure that the dissipation term does not contribute positively to the entropy balance under the assumption that $\partial \state{u} / \partial \entVar$ is SPD along the entire path from $\entVar_L$ to $\entVar_R$. When the entropy variables $\entVar$ depend on the bottom topography as in \cref{eq:entropy_variables}, however, one cannot invoke \cref{eq:path_integral} across an interface with $b_L \neq b_R$ and thus the arguments in \cite{ranocha_comparison_entropy_conservative_fluxes_18} do not imply entropy stability for discontinuous bottom topographies. While we do not consider discontinuous bottom topographies in this work, one entropy-stable option in that setting is to instead penalize the jump in the entropy variables as
\begin{equation}
(J\state{f}^j)^{*}_{L,R} \coloneqq (J\state{f}^j)^{\#}_{L,R} - \frac{J_L}{2}\max(\lambda_L^j,\lambda_R^j)\mat{H}_{L,R}(\state{w}_R - \state{w}_L),
\end{equation}
where $\mat{H}_{L,R}$ is SPD (or, at least, symmetric positive semi-definite) and typically corresponds to the evaluation of $\partial \state{u} / \partial \entVar$ at an average between the two states. The construction of such a dissipation term was explored in detail by Ranocha \cite{ranocha2017shallow} in the context of the planar shallow water equations.
\end{remark}

\subsection{Mass conservation}\label{sec:conservation}

Noting that the two-point mass flux $(Jhv^j)_{L,R}^\#$ corresponding to the first entry of \cref{eq:flux_ec_covariant} is simply an arithmetic mean, it follows that the resulting flux-differencing discretization of the continuity equation is identical to that used by the standard DG method recovered in \cref{eq:dg_standard}. While we therefore expect Baldauf's proof of conservation in \cite[Appendix~E]{baldauf_dg_shallow_water_covariant_20} to extend to the present formulation, we will proceed more generally with the following theorem, which applies to any symmetric two-point flux for a scalar variable discretized in flux form.

\begin{theorem}\label{thm:mass_balance}
If mass flux components $(Jhv^1)^{\#}_{L,R}$ and $(Jhv^2)^{\#}_{L,R}$ are symmetric with respect to $(\cdot)_L$ and $(\cdot)_R$, and the corresponding entry in the source term vector $\state{s}$ is zero, the semi-discrete mass balance on a single element is then given for the scheme in \cref{eq:dgsem} by
\begin{equation}\label{eq:mass_balance}
\frac{\mathrm{d}}{\mathrm{d}t}I_\physelem[h] = \sum_{j=0}^N \omega_{j} \left[
(Jhv^1)^{*}_{0j,0j^+} -(Jhv^1)^{*}_{Nj,Nj^+}\right]
+ \sum_{i=0}^N \omega_{i} \left[
(Jhv^2)^{*}_{i0,i0^+} - (Jhv^2)^{*}_{iN,iN^+}\right],
\end{equation}
where $(Jhv^1)^{*}_{L,R}$ and $(Jhv^2)^{*}_{L,R}$ are the corresponding interface flux components for the continuity equation.
\end{theorem}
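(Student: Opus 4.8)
The plan is to specialize the weak-form flux-differencing discretization in \cref{eq:dgsem} to its first component, which governs the nodal depth values $h_{ij}$, and then sum the resulting scalar equations over all nodes of the element $\physelem$. Since the first entries of $\state{f}^1$ and $\state{f}^2$ are $hv^1$ and $hv^2$ according to \cref{eq:states_and_fluxes}, and the first entry of $\state{s}$ vanishes by hypothesis, this component involves only the two-point mass fluxes $(Jhv^1)^{\#}$, $(Jhv^2)^{\#}$ and their interface counterparts $(Jhv^1)^{*}$, $(Jhv^2)^{*}$. Summing over $i,j \in \{0,\dots,N\}$ and using the time-invariance of $J_{ij}$, the left-hand side collapses to $\frac{\mathrm{d}}{\mathrm{d}t}I_\physelem[h]$ by the definition of the discrete element integral in \cref{eq:element_integral}.

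The crux of the argument — and essentially the only step with any content — is to show that the flux-differencing volume contributions vanish. Because $\mat{S} = 2\mat{Q} - \mat{B}$ is skew-symmetric, which follows from the SBP property \cref{eq:SBPprop} together with the symmetry of the diagonal matrix $\mat{B}$, for each fixed $j$ one can relabel the summation indices $i \leftrightarrow m$ in the double sum $\sum_{i,m} \omega_j S_{im}(Jhv^1)^{\#}_{ij,mj}$ and invoke the assumed symmetry of $(Jhv^1)^{\#}$ in its two arguments to conclude that this sum equals its own negative, hence is zero; the analogous argument in the second tensor-product direction disposes of the $(Jhv^2)^{\#}$ term. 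This is the discrete counterpart of the fact that, at the continuous level, the divergence of the mass flux integrates to a pure boundary term.

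What remains is to collect the surface terms. Carrying out the sums over $i$ and $j$, the Kronecker deltas $\delta_{i0}$, $\delta_{iN}$, $\delta_{j0}$, $\delta_{jN}$ collapse the remaining sums to the four element faces, leaving exactly $\sum_{j=0}^N \omega_j\big[(Jhv^1)^{*}_{0j,0j^+} - (Jhv^1)^{*}_{Nj,Nj^+}\big]$ together with $\sum_{i=0}^N \omega_i\big[(Jhv^2)^{*}_{i0,i0^+} - (Jhv^2)^{*}_{iN,iN^+}\big]$, which is precisely \cref{eq:mass_balance}. I do not anticipate any genuine obstacle; the only points requiring care are the index bookkeeping in the double sums and the skew-symmetry cancellation of the volume term described above.
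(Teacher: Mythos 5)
Your proposal is correct and follows essentially the same route as the paper's proof: isolate the continuity component, sum over the nodes so the left-hand side becomes $\tfrac{\mathrm{d}}{\mathrm{d}t}I_\physelem[h]$ and the Kronecker deltas collapse the surface terms, and kill the volume terms using the skew-symmetry of $\mat{S}$ together with the symmetry of the two-point mass flux. The paper phrases the cancellation by writing $\mat{S}=\mat{Q}-\mat{Q}^\T$ via the SBP property and splitting the double sum, which is equivalent to your index-relabelling argument.
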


\begin{proof}
Considering the component of \cref{eq:dgsem} corresponding to the continuity equation and summing over all quadrature nodes, we obtain \begin{equation}
\begin{aligned} \label{eq:mass_balance_first_step}
\frac{\mathrm{d}}{\mathrm{d}t} I_\physelem[h] &= \sum_{j=0}^N \omega_j \left(-\sum_{i,m=0}^N S_{im}(Jhv^1)^{\#}_{ij,mj} +
\sum_{i=0}^N\left[\delta_{i0} (Jhv^1)^{*}_{0j,0j^+}
- \delta_{iN}(Jhv^1)^{*}_{Nj,Nj^+} \right]\right) \\
&+ \sum_{i=0}^N \omega_i \left(-\sum_{j,m=0}^N S_{jm}(Jhv^2)^{\#}_{ij,im} +
\sum_{j=0}^N\left[\delta_{j0} (Jhv^2)^{*}_{i0,i0^+}
- \delta_{jN} (Jhv^2)^{*}_{iN,iN^+}\right]\right),
\end{aligned}
\end{equation}
due to the absence of source terms in the mass balance. Noting that the right-hand side of \cref{eq:mass_balance} results from summing over the Kronecker delta, we must show that the contributions from the flux-differencing volume terms vanish. Using the SBP property in \cref{eq:SBPprop} to obtain
\begin{equation}\label{eq:skew_sym}
\mat{S} = \mat{Q} - \mat{Q}^\T,
\end{equation}
we can rewrite the contribution on the first line of \cref{eq:mass_balance_first_step} as
\begin{equation}
\sum_{i,m=0}^N S_{im}(Jhv^1)^{\#}_{ij,mj} =\sum_{i,m=0}^N Q_{im}(Jhv^1)^{\#}_{ij,mj} - \sum_{i,m=0}^N Q_{mi}(Jhv^1)^{\#}_{ij,mj},
\end{equation}
where the first and second terms on the right-hand side cancel when the two-point mass flux is symmetric with respect to its two subscripts. Applying an analogous procedure to the volume term on the second line of \cref{eq:mass_balance_first_step}, we therefore obtain \cref{eq:mass_balance}.
\end{proof}

The right-hand side of \cref{eq:mass_balance} is a discretization of the line integral of the inward normal component of the numerical mass flux over the boundary of the quadrilateral element. To show that the scheme is globally conservative of mass, we proceed similarly to \cite[Appendix~E]{baldauf_dg_shallow_water_covariant_20} under the assumption that every node on the boundary of the element $\physelem$, associated with the state $(\cdot)_L$, coincides in physical space with a node on the boundary of an adjacent element, associated with the state $(\cdot)_R$, for which the corresponding quadrature weight is equal to that associated with $(\cdot)_L$. Summing \cref{eq:mass_balance} over all elements to obtain the global integral in \cref{eq:global_integral} and considering the contribution to the right-hand side from each pair of coincident interface nodes, we see that the discretization is globally conservative when the interface mass fluxes cancel as
\begin{equation}\label{eq:matched_flux}
(Jhv^j)^{*}_{L,R} (n_j)_L = - (Jhv^j)^{*}_{R,L} (n_j)_R
\end{equation}
for all coincident pairs of interface nodes, where $n_j$ denotes the $j^\mathrm{th}$ component of the outward unit normal vector in reference coordinates. Recalling from \cref{rmk:transform_to_interior} that evaluating the left-hand side of \cref{eq:matched_flux} requires transforming the momentum vector at $(\cdot)_R$ into the $(\cdot)_L$ coordinate system, while the right-hand side requires transforming the momentum vector at $(\cdot)_L$ into the $(\cdot)_R$ coordinate system, it follows that the resulting discretization is conservative across element interfaces if the transformation matrices in \cref{eq:transformation_matrix}
satisfy
\begin{equation}\label{eq:inverse_transformations}
\mat{A}_{L\to R} = \mat{A}_{R \to L}^{-1},
\end{equation}
which is the case when the manifold is smooth and the covariant and contravariant basis vectors are computed analytically, as described in \cref{sec:mapping}.

\begin{remark}
In addition to the element-wise conservation property established above, the flux-differencing discretization in \eqref{eq:dgsem} satisfies conservation of mass at the \emph{nodal} level for any symmetric two-point mass flux. This property follows from the results of Fisher \etal \cite{fisher_carpenter_telescopingflux_conservation_13,Fisher2013a} and Carpenter \etal \cite{Carpenter2014}, who showed that flux-differencing formulations using diagonal-norm SBP operators including boundary nodes, such as those used in our LGL-based discontinuous spectral-element method, can be expressed in an equivalent telescoping flux form compatible with the Lax--Wendroff theorem when combined with symmetric two-point fluxes within a one-dimensional or tensor-product discretization. The existence of such an equivalence is important for the use of subcell limiting techniques (see, for example, Rueda-Ramírez \etal \cite{rueda2022subcell}).
\end{remark}

\subsection{Entropy conservation and entropy stability}

We now introduce the following definitions for the numerical entropy flux and entropy production terms, which will be used to facilitate the semi-discrete entropy analysis of the covariant-form discretization through a similar approach to that employed in \cite{ruedaramirez_gauss_collocation_mhd_23} for discretizations of standard Cartesian formulations of nonconservative systems.

\begin{definition}\label{def:entropy_flux_covariant}
The \emph{numerical entropy flux} between two states $(\cdot)_L$ and $(\cdot)_R$ on each side of an element interface is given by
\begin{equation}\label{eq:entropy_flux_covariant}
(J\entflux^j)^{*}_{L,R} \coloneqq
\frac{1}{2}\Big[\entVar_L^\T
(J\state{f}^j)^{*}_{L,R} +
\entVar_{R}^\T
(J\state{f}^j)^{*}_{R,L}\Big]
- \frac{1}{2}\Big[(J\varPsi^j)_L + (J\varPsi^j)_R\Big].
\end{equation}
The \emph{numerical entropy production} terms associated with pairs of volume or interface nodes are given, respectively, by
\begin{subequations}
\begin{align}
r^{j\#}_{L,R} &\coloneqq
\Big[\entVar_R^\T
(J\state{f}^j)^{\#}_{R,L} -
\entVar_L^\T
(J\state{f}^j)^{\#}_{L,R}\Big] - \Big[(J\varPsi^j)_R - (J\varPsi^j)_L\Big], \label{eq:volume_entropy_production}\\
r^{j*}_{L,R} &\coloneqq
\Big[\entVar_R^\T
(J\state{f}^j)^{*}_{R,L} -
\entVar_L^\T
(J\state{f}^j)^{*}_{L,R}\Big] - \Big[(J\varPsi^j)_R - (J\varPsi^j)_L\Big].\label{eq:interface_entropy_production}
\end{align}
\end{subequations}
\end{definition}

\begin{remark}
The entropy conservation and entropy stability conditions in \cref{eq:nonsymmetric_tadmor} and \cref{eq:nonsymmetric_tadmor_dissipative} correspond to $r^{j\#}_{L,R} = 0$ and $r^{j*}_{L,R} \leq 0$, respectively.
\end{remark}

Besides the entropy conservation and entropy stability properties of the two-point flux functions, our proofs of entropy conservation and entropy stability will make use of the fact that the source term yields zero net contribution to the entropy balance, which is established as follows.

\begin{lemma}\label{lem:source_term_neutral}
The source term in \cref{eq:source_term_ec} satisfies $\entVar^\T\state{s} = 0$.
\end{lemma}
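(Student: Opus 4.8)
The plan is to exploit the structure of the entropy variables in \cref{eq:entropy_variables}: since the continuity component of $\state{s}$ in \cref{eq:source_term_ec} is zero and the second and third entries of $\entVar$ are the covariant velocity components $v_1$ and $v_2$, the product $\entVar^\T\state{s}$ reduces to $v_i$ contracted with the $i$-th momentum-source entry, \ie
\begin{equation*}
\entVar^\T\state{s} = v_i\left[-\tfrac{1}{2}\big(\Gamma_{jk}^i hv^jv^k - G^{ik}\Gamma_{jk}^l hv^j v_l\big) - fJG^{ij}\varepsilon_{jk}hv^k\right],
\end{equation*}
summed over $i \in \{1,2\}$. I would then split this contraction into a geometric (Christoffel) part and a Coriolis part and show that each vanishes on its own.

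For the geometric part, I would simply observe that $v_i\big(\Gamma_{jk}^i hv^jv^k - G^{ik}\Gamma_{jk}^l hv^j v_l\big)$ is exactly the quantity already shown to be zero in \cref{eq:cancel_christoffel}: applying the index-raising identity in \cref{eq:raise_lower} turns $v_i G^{ik}\Gamma_{jk}^l h v^j v_l$ into $v^k\Gamma_{jk}^l h v^j v_l$, which coincides with $v_i\Gamma_{jk}^i h v^jv^k$ after relabelling the dummy summation indices, so the two terms cancel and the geometric contribution to $\entVar^\T\state{s}$ is zero.

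For the Coriolis part, I would pull the scalar factor $fJh$ out of the contraction and use \cref{eq:raise_lower} to write $v_i G^{ij} = v^j$, obtaining $-fJh\,v^j\varepsilon_{jk}v^k$. Since the product $v^jv^k$ is symmetric in $j$ and $k$ while the Levi-Civita symbol $\varepsilon_{jk}$ is antisymmetric, the contraction $v^j\varepsilon_{jk}v^k$ vanishes identically, so the Coriolis contribution is also zero. Adding the two zero contributions yields $\entVar^\T\state{s} = 0$, as claimed.

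There is essentially no hard step here: the geometric cancellation is inherited verbatim from \cref{eq:cancel_christoffel}, and the Coriolis cancellation is the standard observation that contracting a symmetric tensor with an antisymmetric one gives zero. The only care required is bookkeeping of upper versus lower indices when invoking \cref{eq:raise_lower} and making explicit that the common factor $fJh$ may be extracted from the index sum before applying the symmetry argument.
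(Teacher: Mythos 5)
Your proposal is correct and follows essentially the same route as the paper's proof: contract $\entVar$ with $\state{s}$, cancel the Christoffel terms exactly as in \cref{eq:cancel_christoffel} after raising the index with $G^{ik}$, and kill the Coriolis term by the antisymmetry of $\varepsilon_{jk}$ against the symmetric product $v^jv^k$. No gaps; the bookkeeping with \cref{eq:raise_lower} is handled correctly.
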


\begin{proof}
Contracting the source term in \cref{eq:source_term_ec} with the vector of entropy variables results in
\begin{equation}
\entVar^\T \state{s} = - v_i\left(\frac{1}{2}\Gamma_{jk}^ihv^jv^k - \frac{1}{2}G^{ik}\Gamma_{jk}^lhv^jv_l + f JG^{ij}\varepsilon_{jk} hv^k\right).
\end{equation}
Noting that the contravariant metric tensor components in the last two terms in parentheses raise the covariant index in $v_i$ to obtain the contravariant components $v^k$ and $v^j$, respectively, the contribution from the geometric source term then vanishes as in \cref{eq:cancel_christoffel}, while the contribution from the Coriolis term vanishes due to the skew-symmetry of the Levi-Civita symbol.
\end{proof}

We now present the main result of this section, establishing the semi-discrete entropy balance.

\begin{theorem}\label{thm:entropy_balance_covariant}
Using the numerical entropy flux and production terms introduced in \cref{def:entropy_flux_covariant}, the time rate of change in the mathematical entropy integrated over a given element is given for the spatial discretization in \cref{eq:dgsem} by
\begin{equation}\label{eq:entropy_balance_covariant}
\begin{aligned}
\frac{\mathrm{d}}{\mathrm{d} t} I_\physelem[\entropy]
&=
\sum_{j=0}^N \omega_{j} \left[
(J\entflux^1)^{*}_{0j,0j^+} -(J\entflux^1)^{*}_{Nj,Nj^+}\right]
+ \sum_{i=0}^N \omega_{i} \left[
(J\entflux^2)^{*}_{i0,i0^+} - (J\entflux^2)^{*}_{iN,iN^+}\right]
\\ &+ \frac{1}{2} \sum_{j=0}^N \omega_{j} \left(r^{1*}_{0j^+,0j} + r^{1*}_{Nj,Nj^+}\right)
+ \frac{1}{2}\sum_{i=0}^N \omega_{i} \left(r^{2*}_{i0^+,i0} + r^{2*}_{iN,iN^+}\right) \\ &+ \sum_{i,j,m=0}^N \omega_i\omega_j
\left(
D_{im} r^{1\#}_{ij,mj} + D_{jm} r^{2\#}_{ij,im}
\right).
\end{aligned}
\end{equation}
\end{theorem}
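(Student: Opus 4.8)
The plan is to substitute the weak-form discretization in \cref{eq:dgsem} into the discrete entropy rate in \cref{eq:entropy_integral}, contract each nodal equation with $\entVar_{ij}$, and sum over all quadrature points, separating the result into a source contribution, volume flux-differencing contributions, and interface penalty contributions. The source contribution is $\sum_{i,j}\omega_i\omega_j J_{ij}\,\entVar_{ij}^\T\state{s}_{ij}$, which vanishes termwise by \cref{lem:source_term_neutral} and so drops out immediately.

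For the volume terms I would use the skew-symmetry of $\mat{S}$ (equivalently $\mat{S} = \mat{Q} - \mat{Q}^\T$, cf.\ \cref{eq:skew_sym}) to symmetrize each double sum over the differentiated index; for the $\xi^1$ direction and fixed $j$ this gives $\sum_{i,m}S_{im}\,\entVar_{ij}^\T(J\state{f}^1)^{\#}_{ij,mj} = \tfrac12\sum_{i,m}S_{im}\big(\entVar_{ij}^\T(J\state{f}^1)^{\#}_{ij,mj} - \entVar_{mj}^\T(J\state{f}^1)^{\#}_{mj,ij}\big)$, and similarly with the roles of $j$ and $m$ interchanged for $\xi^2$. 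Invoking the definition of the volume entropy production in \cref{eq:volume_entropy_production}, which is meaningful by \cref{lem:ec_flux_covariant}, the bracket equals $-r^{1\#}_{ij,mj} + (J\varPsi^1)_{ij} - (J\varPsi^1)_{mj}$. Splitting $\mat{S} = 2\mat{Q} - \mat{B}$ with $Q_{im} = \omega_i D_{im}$ and using the consistency property $r^{1\#}_{ij,ij} = 0$ (so the diagonal $\mat{B}$ part drops out), the $r^{1\#}$ contribution collapses to $\sum_{i,j,m}\omega_i\omega_j D_{im}\,r^{1\#}_{ij,mj}$, while the flux-potential contribution telescopes, via the row and column sums of $\mat{S}$ that follow from \cref{eq:SBPprop} and the row-sum identity \cref{eq:rowsum}, to the boundary difference $\sum_j\omega_j\big[(J\varPsi^1)_{Nj} - (J\varPsi^1)_{0j}\big]$, with the analogous result in the second coordinate direction.

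For the interface terms, summing the Kronecker-delta penalties yields $\sum_j\omega_j\big[\entVar_{0j}^\T(J\state{f}^1)^{*}_{0j,0j^+} - \entVar_{Nj}^\T(J\state{f}^1)^{*}_{Nj,Nj^+}\big]$ together with the corresponding $\xi^2$ sum. I would then solve the two-by-two linear system defined by the numerical entropy flux in \cref{eq:entropy_flux_covariant} and the interface entropy production in \cref{eq:interface_entropy_production} to obtain $\entVar_L^\T(J\state{f}^j)^{*}_{L,R} = (J\entflux^j)^{*}_{L,R} + (J\varPsi^j)_L - \tfrac12 r^{j*}_{L,R}$, and use the elementary skew-symmetry $r^{j*}_{R,L} = -r^{j*}_{L,R}$ (immediate from \cref{eq:interface_entropy_production}) to rewrite the left-face production term as $r^{1*}_{0j^+,0j}$. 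Substituting back expresses the interface terms as telescoping numerical-entropy-flux differences, the half-weighted production terms $\tfrac12\sum_j\omega_j\big(r^{1*}_{0j^+,0j} + r^{1*}_{Nj,Nj^+}\big)$, plus residual flux-potential boundary terms $\sum_j\omega_j\big[(J\varPsi^1)_{0j} - (J\varPsi^1)_{Nj}\big]$. Adding the volume and interface contributions, the flux-potential boundary terms cancel exactly, and collecting what remains gives precisely \cref{eq:entropy_balance_covariant}; the second coordinate direction is handled identically.

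The main obstacle I expect is the bookkeeping in the volume step: correctly tracking the $\mat{B}$-related boundary contributions produced by the row sums $\sum_m S_{im} = -B_{ii}$ and the column sums $\sum_i S_{im} = B_{mm}$, and verifying that the flux-potential part telescopes to exactly the boundary values that cancel those arising from the interface step. One must also be careful about the geometric-transformation convention of \cref{rmk:transform_to_interior}, so that the interior/exterior roles of the states entering each interface flux — and hence the subscript ordering in $r^{j*}$ and the symmetry of $(J\entflux^j)^{*}$ in its two arguments — are applied consistently.
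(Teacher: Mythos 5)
Your proposal is correct and follows essentially the same route as the paper's proof: contract \cref{eq:dgsem} with the entropy variables, use the skew-symmetry of $\mat{S}$ together with the definition of $r^{j\#}$ and the SBP and row-sum identities to reduce the volume terms to telescoping flux-potential differences plus the $D$-weighted production sums, rewrite the interface contractions via $\entVar_L^\T(J\state{f}^j)^{*}_{L,R} = (J\entflux^j)^{*}_{L,R} + (J\varPsi^j)_L - \tfrac{1}{2}r^{j*}_{L,R}$ together with the antisymmetry of $r^{j*}$, cancel the flux-potential boundary terms, and remove the source contribution by \cref{lem:source_term_neutral}. The differences are cosmetic (your $\tfrac{1}{2}\mat{S}$ symmetrization versus the paper's use of $\mat{S}=\mat{Q}-\mat{Q}^\T$, and your citing \cref{lem:ec_flux_covariant} where only the definitions in \cref{def:entropy_flux_covariant} are needed), and your sign bookkeeping for the flux-potential boundary terms is internally consistent and produces exactly the cancellation asserted at the end of the paper's argument.
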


\begin{proof}
Contracting both sides of \cref{eq:dgsem} with the vector of entropy variables $\entVar_{ij}$ and summing over all quadrature nodes, we obtain
\begin{equation}\label{eq:entropy_time_derivative_covariant}
\begin{aligned}
\frac{\mathrm{d}}{\mathrm{d} t} I_\physelem[\entropy] &= \sum_{j=0}^N\omega_j \left(-\sum_{i,m=0}^N \entVar_{ij}^\T S_{im}(J\state{f}^1)^{\#}_{ij,mj} +
\sum_{i=0}^N \entVar_{ij}^\T\left[\delta_{i0} (J\state{f}^1)^{*}_{0j,0j^+}
- \delta_{iN}(J\state{f}^1)^{*}_{Nj,Nj^+}\right] \right) \\
&+ \sum_{i=0}^N\omega_i \left(-\sum_{j,m=0}^N \entVar_{ij}^\T S_{jm}(J\state{f}^2)^{\#}_{ij,im} +
\sum_{j=0}^N \entVar_{ij}^\T \left[\delta_{j0} (J\state{f}^2)^{*}_{i0,i0^+}
- \delta_{jN} (J\state{f}^2)^{*}_{iN,iN^+}\right] \right) \\ &+ \sum_{i,j=0}^N \omega_i\omega_jJ_{ij}\entVar_{ij}^\T\state{s}_{ij}.
\end{aligned}
\end{equation}
Considering the contribution from the flux-differencing volume term on the first line of \cref{eq:entropy_time_derivative_covariant} and using \cref{eq:skew_sym}, we can use the definition of the entropy production term in \cref{eq:volume_entropy_production} to obtain
\begin{equation}
\label{eq:entropy_balance_volume_covariant_1}
\begin{aligned}
\sum_{i,m=0}^N \entVar_{ij}^\T S_{im}(J\state{f}^1)^{\#}_{ij,mj} &= \sum_{i,m=0}^N\entVar_{ij}^\T (Q_{im} - Q_{mi})(J\state{f}^1)^{\#}_{ij,mj} \\
&= \sum_{i,m=0}^N Q_{im}\left[\entVar_{ij}^\T (J\state{f}^1)^{\#}_{ij,mj} - \entVar_{mj}^\T (J\state{f}^1)^{\#}_{mj,ij}\right] \\
&=\sum_{i,m=0}^N Q_{im}\left[(J\varPsi^1)_{mj} - (J\varPsi^1)_{ij} - r^{1\#}_{ij,mj}\right] \\
&= \sum_{i,m=0}^N Q_{im}(J\varPsi^1)_{mj} - \sum_{i=0}^N (J\varPsi^1)_{ij}\sum_{m=0}^NQ_{im} - \sum_{i,m=0}^NQ_{im}r^{1\#}_{ij,mj} \\
&= (J\varPsi^1)_{Nj} - (J\varPsi^1)_{0j} - \sum_{i,m=0}^N\omega_iD_{im}r^{1\#}_{ij,mj},
\end{aligned}
\end{equation}
where the final equality results from the properties in \cref{eq:SBPprop} and \cref{eq:rowsum} as well as the definition of $\mat{Q}$. Following the same procedure for the flux-differencing term on the second line of \cref{eq:entropy_time_derivative_covariant}, we likewise obtain
\begin{equation}\label{eq:entropy_balance_volume_covariant_2}
\sum_{j,m=0}^N \entVar_{ij}^\T S_{jm}(J\state{f}^2)^{\#}_{ij,im} = (J\varPsi^2)_{iN} - (J\varPsi^2)_{i0} - \sum_{j,m=0}^N\omega_jD_{jm}r^{2\#}_{ij,im}.
\end{equation}
We now turn our attention to the interface terms, which contract with the entropy variables as
\begin{equation}\label{eq:entropy_balance_surface_covariant_1}
\begin{multlined}
\sum_{i=0}^N \entVar_{ij}^\T\left[\delta_{i0} (J\state{f}^1)^{*}_{0j,0j^+}
- \delta_{iN}(J\state{f}^1)^{*}_{Nj,Nj^+}\right]
= (J\varPsi^1)_{0j}
- (J\varPsi^1)_{Nj} \\
+(J\entflux^1)^{*}_{0j,0j^+} -(J\entflux^1)^{*}_{Nj,Nj^+} + \frac{1}{2}\left(r^{1*}_{0j^+,0j} + r^{1*}_{Nj,Nj^+}\right)
\end{multlined}
\end{equation}
and
\begin{equation}\label{eq:entropy_balance_surface_covariant_2}
\begin{multlined}
\sum_{j=0}^N \entVar_{ij}^\T \left[\delta_{j0} (J\state{f}^2)^{*}_{i0,i0^+}
- \delta_{jN} (J\state{f}^2)^{*}_{iN,iN^+}\right]
= (J\varPsi^2)_{i0}
- (J\varPsi^2)_{iN} \\
+ (J\entflux^2)^{*}_{i0,i0^+} - (J\entflux^2)^{*}_{iN,iN^+}
+ \frac{1}{2} \left(r^{2*}_{i0^+,i0} + r^{2*}_{iN,iN^+}\right),
\end{multlined}
\end{equation}
where we have used \cref{lem:ec_flux_covariant} as well as \cref{eq:entropy_flux_covariant,eq:interface_entropy_production}. Substituting \cref{eq:entropy_balance_volume_covariant_1}, \cref{eq:entropy_balance_volume_covariant_2}, \cref{eq:entropy_balance_surface_covariant_1}, and \cref{eq:entropy_balance_surface_covariant_2} into \cref{eq:entropy_time_derivative_covariant}, noting the cancellation of the terms $(J\varPsi^1)_{0j}
- (J\varPsi^1)_{Nj}$ and $(J\varPsi^2)_{i0}
- (J\varPsi^2)_{iN}$, and invoking \cref{lem:source_term_neutral} to show that the source term in \cref{eq:source_term_ec} does not contribute to the entropy balance, we therefore obtain \cref{eq:entropy_balance_covariant}.
\end{proof}

\begin{remark}
Unlike existing proofs of entropy stability in curvilinear coordinates (see, for example, \cite[Appendix~B.1.1]{fisher_phd_thesis_12}), the proof of \cref{thm:entropy_balance_covariant} does not rely on any discrete metric identities. This allows us to use an analytical representation of the geometry and tangent basis vectors, which ensures that \cref{eq:inverse_transformations} is satisfied.
\end{remark}

The first line on the right-hand side of \cref{eq:entropy_balance_covariant} represents a discrete integral of the numerical entropy flux directed into the element, the second line contains interface production terms which are zero for an entropy-conservative interface flux and non-positive for an entropy-stable interface flux, and the third line contains volume production terms which are zero for an entropy-conservative volume flux. Under such conditions, the resulting entropy balance becomes
\begin{equation}
\frac{\mathrm{d}}{\mathrm{d} t} I_\physelem[\entropy]
\leq
\sum_{j=0}^N \omega_{j} \left[
(J\entflux^1)^{*}_{0j,0j^+} -(J\entflux^1)^{*}_{Nj,Nj^+}\right]
+ \sum_{i=0}^N \omega_{i} \left[
(J\entflux^2)^{*}_{i0,i0^+} - (J\entflux^2)^{*}_{iN,iN^+}\right],
\end{equation}
which holds as an equality when the volume and interface fluxes are both entropy conservative. A semi-discrete bound on the integrated entropy analogous to \cref{eq:entropy_inequality} can therefore be derived under the same assumptions invoked for global mass conservation in \cref{sec:conservation} through the cancellation of numerical entropy flux contributions at element interfaces.

\subsection{Well balancing}

It is often desired for a numerical method to exactly preserve certain equilibrium states in order to minimize the error in discretizations of problems corresponding to small perturbations of such equilibria. For the shallow water equations, the ``lake at rest'' or ``atmosphere at rest'' state is an important example of such an equilibrium, corresponding to a steady state given everywhere on $S$ by
\begin{equation}\label{eq:equilibrium}
\totalheight = \mathrm{constant}, \quad \vec{v} = \vec{0},
\end{equation}
where $\totalheight \coloneqq h + \topography$ denotes the total surface height, corresponding to a balance between the pressure gradient and the bottom topography term. The following theorem establishes that the proposed scheme preserves such an equilibrium.

\begin{theorem}\label{thm:well_balancing}
Assuming that the bottom topography $\topography$ is continuous, the discretization in \cref{eq:dgsem} with the volume flux and source term introduced in \cref{lem:consistent_flux_covariant} and the surface flux in \cref{eq:flux_ec_llf} is well balanced in the sense that any solution satisfying \cref{eq:equilibrium} results in $\mathrm{d}\state{u}_{ij}/\mathrm{d}t = 0$.
\end{theorem}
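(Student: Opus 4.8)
The plan is to substitute the equilibrium state directly into the equivalent strong form \cref{eq:dgsem_strong} and show that each of its five lines vanishes. The strong form is the convenient starting point because its first line is a pointwise (split) discretization of the balance law while its remaining lines are interface penalties of the form $[(J\state{f}^d)^{*}_{L,R} - (J\state{f}^d)_L]$, so well balancing reduces to two checks: (i) the split volume operator annihilates the equilibrium, and (ii) the numerical interface flux is consistent, \ie reduces to the interior flux, at an equilibrium interface node. Throughout I use that $\vec v = \vec 0$ at every node, that $h_{ij} = \totalheight - \topography_{ij}$ with $\totalheight$ a single global constant, and that $\topography$ is represented by its LGL interpolant, which is globally continuous because the interpolated function is.

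For the continuity equation the argument is immediate: every two-point mass flux in \cref{eq:flux_ec_covariant} is an arithmetic mean of the vanishing quantities $(Jhv^d)$, the interior mass flux vanishes as well, and at any pair of coincident interface nodes continuity of $\topography$ together with $\totalheight = \mathrm{const}$ forces $h_L = h_R$, so the conservative-variable jump $\state{u}_R - \state{u}_L$ is zero (the momentum components being zero on both sides, and a zero momentum vector remaining zero under the transformation in \cref{eq:transformation_matrix}); hence the local Lax--Friedrichs term in \cref{eq:flux_ec_llf} vanishes, all mass penalties vanish, the mass source entry is zero, and $\mathrm{d}h_{ij}/\mathrm{d}t = 0$.

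The momentum equation is the substantive part. Evaluating the momentum components of \cref{eq:flux_ec_covariant} at $\vec v = \vec 0$, the entire convective/skew-symmetric block drops out (each term carries a factor of $v$ or of a derivative of $v_k$), and the pressure term combines with the nonconservative topography term into $\tfrac{g}{2}(G^{cd}Jh)_L(h_R + \topography_R - \topography_L) = \tfrac{g}{2}(G^{cd}Jh)_L(\totalheight - \topography_L) = \tfrac{g}{2}(JG^{cd}h^2)_L$, a quantity depending only on the left node. Equivalently, reading off the split form in \cref{eq:strong_form_diff_1,eq:strong_form_diff_2}, the only surviving volume contribution to momentum component $c$ is $g(JG^{c1}h)_{ij}\sum_{m} D_{im}\totalheight_{mj}$ plus the analogous $\xi^2$-derivative term, which is zero since $\totalheight_{mj}$ is a nodal constant and $\sum_m D_{im}=0$ by \cref{eq:rowsum}. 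For the interface penalties, using $h_L=h_R$, $\topography_L=\topography_R$, $\vec v = \vec 0$, and the vanishing dissipation established above, the numerical interface flux \cref{eq:flux_ec_llf} reduces to $\tfrac{g}{2}(JG^{cd}h^2)_L$, which is exactly the momentum component of the interior flux $(J\state{f}^d)_L$ in \cref{eq:states_and_fluxes} at $\vec v = \vec 0$; hence each penalty bracket vanishes. Finally the source term \cref{eq:source_term_ec} is identically zero at $\vec v = \vec 0$. Collecting all of this, the right-hand side of \cref{eq:dgsem_strong} is zero, so $\mathrm{d}\state{u}_{ij}/\mathrm{d}t = 0$.

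The main obstacle is structural rather than algebraic and is concentrated at the element interfaces: one must verify that the local Lax--Friedrichs dissipation sees a vanishing jump, which is precisely where the continuity hypothesis on $\topography$ is used (it is what makes $h_L = h_R$ once $\totalheight$ is constant), and that the momentum data imported from the neighbour under \cref{eq:transformation_matrix} stay zero — immediate by linearity, but the place where \cref{rmk:transform_to_interior} is invoked. The one genuinely load-bearing algebraic choice, already built into \cref{lem:consistent_flux_covariant}, is that the topography enters the two-point flux through the difference $\topography_R - \topography_L$ rather than as a pointwise source, so that it telescopes with the pressure term into a function of the total height $\totalheight$; a source-term treatment would not cancel here.
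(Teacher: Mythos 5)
Your proposal is correct and follows essentially the same route as the paper's proof: start from the strong form \cref{eq:dgsem_strong}, observe that the interface penalties vanish at the rest state (since continuity of $\topography$ with constant $\totalheight$ and $\vec{v}=\vec{0}$ makes the discrete solution continuous across interfaces), and reduce the volume terms via \cref{eq:strong_form_diff_1,eq:strong_form_diff_2} to pressure-gradient terms of the form $g(G^{cd}h)_{ij}\sum_m D_{im}\totalheight$, which vanish by \cref{eq:rowsum} because $\totalheight$ is constant. Your additional explicit verification that the local Lax--Friedrichs dissipation sees a zero jump and that the entropy-conservative interface flux collapses to the interior flux is a finer-grained account of what the paper summarizes as ``the interface terms vanish for continuous bottom topographies,'' not a different argument.
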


\begin{proof}
Proceeding similarly to the proof of \cref{lem:consistent_flux_covariant}, we begin with the strong form in \cref{eq:dgsem_strong} and again note that the interface terms vanish for continuous bottom topographies. Rewriting the volume terms as in \cref{eq:strong_form_diff_1,eq:strong_form_diff_2}, we then set the contravariant velocity components $v^1$ and $v^2$ both to zero and combine the contributions from $h$ and $\topography$ to obtain
\begin{equation}\label{eq:covariant_zero_velocity}
\frac{\mathrm{d}}{\mathrm{d}t}\state{u}_{ij} = - \sum_{m=0}^N\begin{pmatrix}
0\\ g(G^{11}h)_{ij}D_{im}\totalheight_{mj} \\
g(G^{21}h)_{ij}D_{im}\totalheight_{mj}
\end{pmatrix}
- \sum_{m=0}^N\begin{pmatrix}
0\\ g(G^{12}h)_{ij}D_{jm}\totalheight_{im} \\ g(G^{22}h)_{ij}D_{jm}\totalheight_{im}
\end{pmatrix}.
\end{equation}
Finally, we can use \cref{eq:rowsum} to show that both terms on the right-hand side of \cref{eq:covariant_zero_velocity} vanish when the surface height $h + \topography$ is constant.
\end{proof}

\begin{remark}
The assumption of continuous bottom topography is used in the proof of \cref{thm:well_balancing} to show that the interface terms in \cref{eq:dgsem_strong} vanish for constant $h+\topography$.
For discontinuous bottom topographies (\ie $\topography_L \neq \topography_R$ across an interface), these cancellations generally do not hold for the present discretization. In such contexts, well-balanced schemes can be obtained, for example, using hydrostatic reconstruction procedures based on the work of Audusse \etal \cite{audusse_hydrostatic_reconstruction_04} and related well-balancing approaches for discontinuous topography (see, for example, Fjordholm \etal \cite{fjordholm_well_balanced_11}).
\end{remark}

\section{Numerical experiments}\label{sec:numerical_experiments}

The numerical experiments presented in this section are performed using \texttt{TrixiAtmo.jl},\footnote{\url{https://github.com/trixi-framework/TrixiAtmo.jl}} an open-source package written in Julia \cite{bezanson_julia_17} extending the \texttt{Trixi.jl} simulation framework \cite{ranocha_trixi_22} to atmospheric flow problems. The scripts required to produce and post-process the results in this paper are provided within the article's reproducibility repository \cite{reproducibility_repository}. All initial conditions and problem data are prescribed in SI units (kg-m-s), and, following \cite{williamson1992standard}, we make use of the following values for the Earth's radius, rotation rate, and gravitational acceleration:
\begin{equation}
a = 6.37122 \times 10^6 \ \mathrm{m}, \quad \Omega = 7.292 \times 10^{-5} \ \text{s}^{-1}, \quad g = 9.80616\ \mathrm{m} \, \mathrm{s}^{-2}.
\end{equation}
Taking $\cartbasis_z$ to be the Earth's rotation axis, the Coriolis parameter is given by $f= 2\Omega y/a = 2\Omega\sin\theta$ for all cases considered here. The mesh is constructed as described in \cref{sec:mapping}, with the corner vertices placed according to a uniform subdivision of the cube mapped onto the sphere using an equiangular gnomonic projection (see, for example, Ronchi \etal \cite{ronchi_cubed_sphere_96} as well as earlier work by Sadourny \cite{sadourny_cubed_sphere_72}), and all metric terms are computed exactly. Similarly to \cite{bao_flux_form_dg_spherical_swe_14}, we consider the nominal resolution for such cubed-sphere grids to be the average equatorial distance between quadrature points, which can be computed as
\begin{equation}\label{eq:nominal_resolution}
\text{Nominal resolution} \coloneqq \frac{\pi}{2}\frac{a}{MN},
\end{equation}
where $M$ denotes the number of elements per direction on each face of the cube and $N$ denotes the polynomial degree of the tensor-product spectral-element approximation. Time integration is performed using the fourth-order, low-storage explicit Runge--Kutta method of Carpenter and Kennedy \cite{carpenter_kennedy_rk_94} implemented in \texttt{DifferentialEquations.jl} \cite{rackauckas_julia_diffeq_17}. We make use of an adaptive time step based on the Courant--Friedrichs--Lewy (CFL) condition, as given by
\begin{equation}\label{eq:time_step}
\Delta t = C \min_{\physelem\subset S}\min_{i,j=0}^N \left(\frac{2}{N+1} \frac{1} {\lvert\lambda_{ij}^1\rvert +\lvert\lambda_{ij}^2\rvert}\right),
\end{equation}
where $C > 0$ is the Courant number, $\lambda_{ij}^1$ and $\lambda_{ij}^2$ correspond to the nodal values of the contravariant wave speeds in \cref{eq:wave_speeds}, and we note that the quantity in parentheses in \cref{eq:time_step} is an approximation of the ratio of the local mesh spacing to the maximum wave speed in physical space, which we have adapted from Ranocha \etal \cite{ranocha_optimized_rk_22, ranocha_error_based_step_size_dg_25}. In order to ensure that the temporal discretization error is negligible relative to the spatial discretization error and that the robustness of the fully discrete algorithm is not impacted by the CFL condition for the explicit temporal discretization, we take $C = 0.1$ for all numerical experiments in this section. The remainder of this section will consider a sequence of four test cases, which will be used to assess the accuracy and robustness of the proposed schemes as well as to support the theoretical analysis presented in \cref{sec:entropy_stable_covariant}. We will present results for an entropy-conservative formulation (abbreviated in this section as ``EC'') in which both the volume and interface fluxes are computed using \cref{eq:flux_ec_covariant}, as well as an entropy-stable formulation (abbreviated as ``ES'') in which the volume and interface fluxes are given by \cref{eq:flux_ec_covariant} and \cref{eq:flux_ec_llf}, respectively.

\subsection{Unsteady solid-body rotation}

The first case we consider is an unsteady analytical solution to the spherical shallow water equations proposed in \cite[Example~3]{laeuter_unsteady_analytical_swe_05}. To obtain such a solution, we introduce the rotating frame
\begin{equation}
\vec{b}_x(t) \coloneqq \cos(\Omega t)\cartbasis_x + \sin(\Omega t)\cartbasis_y, \quad
\vec{b}_y(t) \coloneqq -\sin(\Omega t)\cartbasis_x + \cos(\Omega t)\cartbasis_y, \quad
\vec{b}_z(t) \coloneqq \cartbasis_z,
\end{equation}
as well as the vector-valued function $\vec{\varphi}(\vec{x},t) \coloneqq (\vec{x} \cdot \vec{b}_x(t)) \cartbasis_x + (\vec{x} \cdot \vec{b}_y(t)) \cartbasis_y + (\vec{x} \cdot \vec{b}_z(t)) \cartbasis_z$. Defining the Earth's rotation vector as $\vec{\Omega} \coloneqq \Omega\cartbasis_z$, prescribing the bottom topography as
\begin{equation}
\topography(\vec{x}) \coloneqq \frac{1}{2g}\big(\vec{\Omega} \cdot \vec{x}\big)^2,
\end{equation}
and defining the fixed vector $\vec{c} \coloneqq -\sin\alpha\cartbasis_x + \cos\alpha\cartbasis_y$, it can be shown that the following surface height and velocity fields solve the spherical shallow water equations:
\begin{equation}\label{eq:exact_solution}
\totalheight(\vec{x},t) = \frac{1}{g}\left(-\frac{(\vec{\Omega}\cdot \vec{x} +
\vref \,\vec{\varphi}(\vec{c},t) \cdot \vec{x}/a)^2}{2} +
\frac{(\vec{\Omega} \cdot \vec{x})^2}{2} + K\right), \quad
\vec{v}(\vec{x},t) = \vref \, \vec{\varphi}(\vec{c},t) \times \frac{\vec{x}}{a}.
\end{equation}
In our numerical experiments, we take values of $\alpha = \pi/4$, $\vref = 2\pi a / (12 \cdot 86400 \ \mathrm{s})$, and $K = 133681 \ \mathrm{m}^2/\mathrm{s}^2$ for consistency with the results in \cite{laeuter_unsteady_analytical_swe_05}. The numerical solution is initialized on each element by evaluating the state vector $\state{u}$ in terms of $\totalheight(\vec{x},0)$, $\vec{v}(\vec{x},0)$, and $\topography(\vec{x})$ at each node position $\vec{x} = \vec{X}(\xi_i,\xi_j)$ in physical space, where \cref{eq:global_local_coordinates} is used to transform the velocity vector from global Cartesian to local contravariant components.
\par Using $\totalheight_{\mathrm{exact}}$ to denote the analytical height field given in \cref{eq:exact_solution}, we compute the $L^2$ error in such a quantity at $t = 5 \text{ days}$ following the conventions suggested in \cite{williamson1992standard}, wherein we employ the quadrature formula in \cref{eq:global_integral} and normalize to obtain
\begin{equation}\label{eq:l2_error}
\text{Normalized $L^2$ height error} \coloneqq \frac{I_S[(\totalheight-\totalheight_{\mathrm{exact}})^2]^{1/2}}{I_S[(\totalheight_{\mathrm{exact}})^2]^{1/2}}.
\end{equation}
Holding the polynomial degree $N$ fixed and successively doubling the number of elements per dimension on each face of the cubed sphere, we observe that the ES scheme converges at an optimal rate of $N+1$, while the EC scheme converges at a rate of $N$ for odd polynomial degrees and $N+1$ for even polynomial degrees, as shown for $N = 3$ and $N = 4$ in \cref{fig:unsteady_solid_body_rotation_convergence_N3} and \cref{fig:unsteady_solid_body_rotation_convergence_N4}, respectively. Holding the number of elements fixed and varying the polynomial degree, we observe exponential convergence in \cref{fig:unsteady_solid_body_rotation_p_refine_M4} for both the EC and ES formulations. In all cases, the error values are smaller for the ES scheme than for the EC scheme. Such results are consistent with the expected convergence behaviour for discontinuous spectral-element discretizations in Euclidean space.

\begin{figure}[!t]
\centering
\begin{subfigure}{0.333\textwidth}
\includegraphics[width=\textwidth]{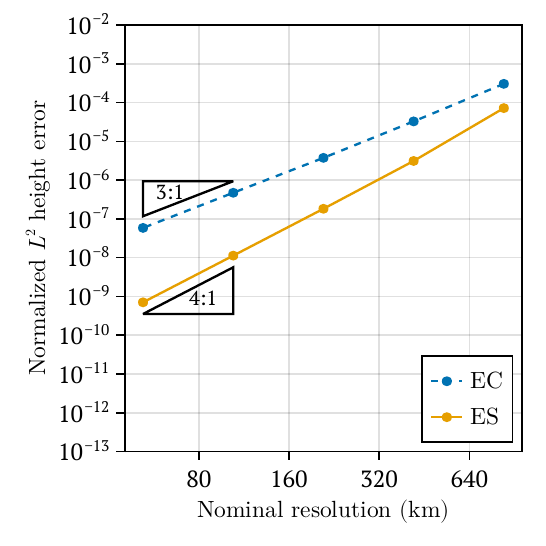}
\subcaption{Convergence with mesh refinement for $N=3$}\label{fig:unsteady_solid_body_rotation_convergence_N3}
\end{subfigure}\hfill
\begin{subfigure}{0.333\textwidth}
\includegraphics[width=\textwidth]{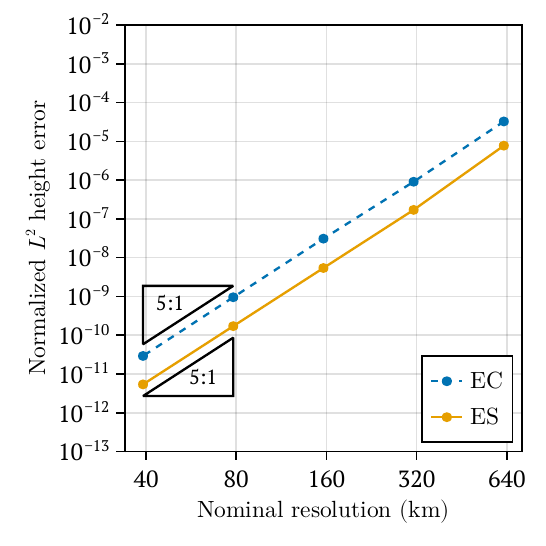}
\subcaption{Convergence with mesh refinement for $N=4$}\label{fig:unsteady_solid_body_rotation_convergence_N4}
\end{subfigure}\hfill
\begin{subfigure}{0.333\textwidth}
\includegraphics[width=\textwidth]{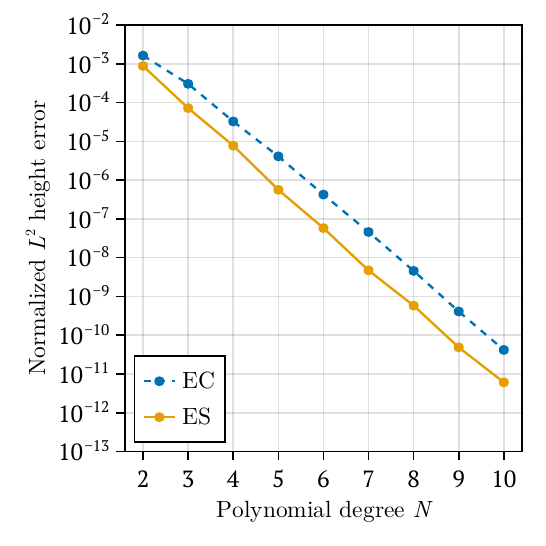}
\subcaption{Convergence with polynomial degree for $M=4$}\label{fig:unsteady_solid_body_rotation_p_refine_M4}
\end{subfigure}%
\caption{Convergence results for the unsteady solid-body rotation problem at $t = 5\text { days}$}\label{fig:unsteady_solid_body_rotation}
\end{figure}

\subsection{Flow over an isolated mountain}\label{sec:isolated_mountain}

Our second test case involves an initially zonal flow over an isolated mountain. Following \cite[Case~5]{williamson1992standard} we consider a mountain of height $\topography_{\mathrm{ref}} = 2000 \ \text{m}$ centred at a longitude of $\lambda_0 = -\pi/2$ and a latitude of $\theta_0 = \pi/6$, as given by
\begin{equation}\label{eq:mountain_orography}
\topography(\lambda,\theta) \coloneqq
\topography_{\mathrm{ref}} \left(1 - \sqrt{\min(R^2, (\lambda-\lambda_0)^2 + (\theta-\theta_0)^2)}/R\right),
\end{equation}
where we take $R = \pi/9$. Representing the velocity field in terms of a spherical basis as $\vec{v} = u\vec{i} + v\vec{j}$, where $u$ and $v$ denote the zonal and meridional velocity components and $\vec{i}$ and $\vec{j}$ denote the longitudinal and latitudinal basis vectors, respectively, we prescribe the initial values of $\totalheight$, $u$, and $v$ as functions of longitude and latitude given by
\begin{equation}\label{eq:isolated_mountain_ic}
\totalheight_0(\lambda,\theta) \coloneqq h_{\mathrm{ref}} - \frac{1}{g}\left(a \Omega \vref + \frac{1}{2} \vref^2\right)\sin^2\theta, \quad
u_0(\lambda,\theta) \coloneqq \vref \cos\theta, \quad
v_0(\lambda,\theta) \coloneqq 0 \ \mathrm{m/s},
\end{equation}
where we take $h_{\mathrm{ref}} = 5960 \ \mathrm{m}$. In our numerical experiments, we consider both the case of $\vref = 0 \ \mathrm{m/s}$, for which a well-balanced discretization is expected to maintain the initial condition up to roundoff error, as well as the case of $\vref = 20 \ \mathrm{m/s}$ described in \cite{williamson1992standard}. For initial velocity fields specified in zonal and meridional components as in \cref{eq:isolated_mountain_ic}, we first transform to a Cartesian representation as
\begin{equation}
\begin{pmatrix}
v_x \\ v_y \\ v_z
\end{pmatrix}
=
\begin{pmatrix}
-\sin\lambda & -\cos\lambda\sin\theta \\
\cos\lambda & - \sin\lambda \sin\theta \\
0 & \cos\theta
\end{pmatrix}
\begin{pmatrix}
u \\ v
\end{pmatrix}
\end{equation}
before using \cref{eq:global_local_coordinates} to obtain the contravariant components. We do not apply any smoothing to the sharp orography in \cref{eq:mountain_orography}; however, we note that the use of the collocation derivative operator to compute the bottom topography gradient within a flux-differencing formulation effectively approximates $\topography$ through polynomial interpolation on each element prior to differentiation.
\par Using $(N, M) = (3, 20)$, as was chosen in several previous studies, including \cite[Section~4.3]{bao_flux_form_dg_spherical_swe_14},
and beginning with the case of $\vref = 0 \ \mathrm{m/s}$, we first note that the initial time derivative of the state vector is on the order of machine precision at all nodes for both the EC and ES variants of the scheme, consistent with \cref{thm:well_balancing}. Plotting the temporal growth of the normalized $L^2$ error over a period of 15 days in \cref{fig:well_balanced_l2_h_evolution_N3M20}, we observe a slow deviation from the initial balanced state due to roundoff error accumulation; such error growth occurs more slowly for the ES variant than for the EC variant due to the dissipative effect of the local Lax--Friedrichs numerical flux. In order to assess the conservation properties of the schemes with respect to mass and total energy (\ie mathematical entropy), we consider the case of $\vref = 20 \ \text{m/s}$ and evaluate the metrics
\begin{equation}\label{eq:normalized_mass_entropy}
\text{Normalized mass change} \coloneqq \frac{I_S[h]- I_S[h_0]}{I_S[h_0]}, \quad
\text{Normalized entropy change}\coloneqq \frac{I_S[\entropy] - I_S[\entropy_0]}{I_S[\entropy_0]},
\end{equation}
where $h_0$ and $\entropy_0$ are the layer depth and mathematical entropy fields evaluated at the initial solution state. Plotting the evolution of such quantities in \cref{fig:isolated_mountain_mass_evolution_N3M20} and \cref{fig:isolated_mountain_entropy_evolution_N3M20}, we see that $I_S[h]$ and $I_S[\entropy]$ remain constant up to roundoff error for the EC scheme, reflecting the fact that the spatial discretization is conservative of mass and entropy as a consequence of \cref{thm:mass_balance,thm:entropy_balance_covariant}, while the small time step renders the effect of the temporal discretization on such properties negligible. For the ES scheme, $I_S[h]$ remains constant up to roundoff error, but $I_S[\entropy]$ decays monotonically, reflecting discrete mass conservation and entropy stability. These results are consistent with the theoretical analysis in \cref{sec:entropy_stable_covariant}.
\par
\begin{figure}[!t]
\centering
\begin{subfigure}{0.333\textwidth}
\includegraphics[width=\textwidth]{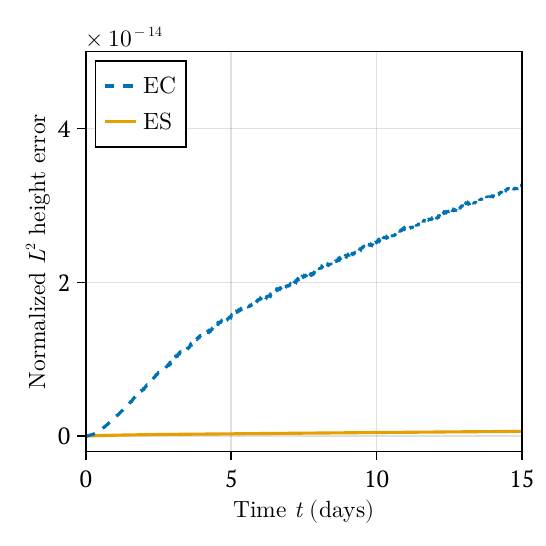}
\subcaption{Normalized $L^2$ height error with $V = 0 \text{ m/s}$}\label{fig:well_balanced_l2_h_evolution_N3M20}
\end{subfigure}\hfill
\begin{subfigure}{0.333\textwidth}
\includegraphics[width=\textwidth]{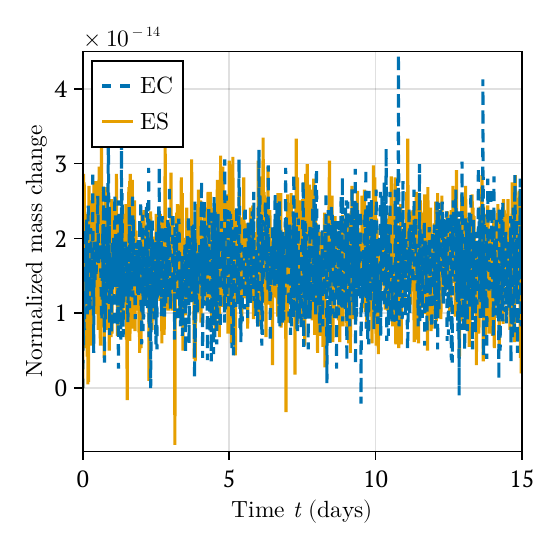}
\subcaption{Normalized mass change with $V = 20 \text{ m/s}$}\label{fig:isolated_mountain_mass_evolution_N3M20}
\end{subfigure}\hfill
\begin{subfigure}{0.333\textwidth}
\includegraphics[width=\textwidth]{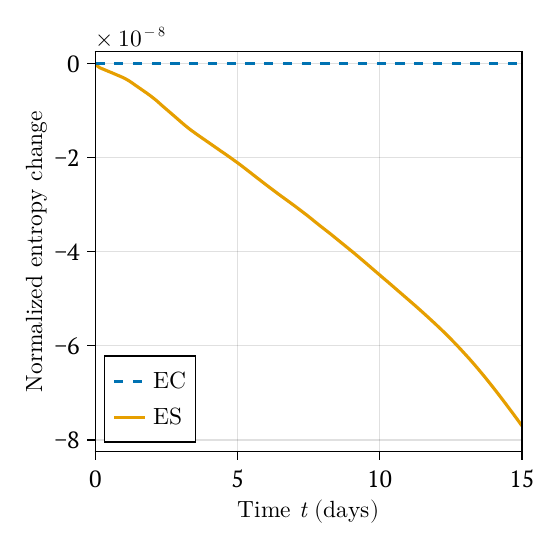}
\subcaption{Normalized entropy change with $V = 20 \text{ m/s}$}\label{fig:isolated_mountain_entropy_evolution_N3M20}
\end{subfigure}%
\caption{Error growth, mass conservation, and total energy (\ie mathematical entropy) conservation or dissipation for the isolated mountain problem with $(N,M) = (3,20)$}\label{fig:isolated_mountain_evolution}
\end{figure}

In \cref{fig:isolated_mountain_contours}, we plot the relative vorticity field, which is given by
\begin{equation}\label{eq:relative_vorticity}
\zeta \coloneqq \varepsilon^{ij}\nabla_iv_j = \frac{1}{J}\big(\partial_1 v_2 - \partial_2 v_1\big),
\end{equation}
at $t = 7 \text{ days}$, considering the case of $\vref = 20 \text{ m/s}$. The resulting contours are qualitatively similar to those presented in \cite[Fig.\ 9]{gaudreault_high_order_shallow_water_cubed_sphere_22} and \cite[Fig.\ 11]{bao_flux_form_dg_spherical_swe_14}. While some oscillatory behaviour is observed in the EC case, likely due to the absence of numerical dissipation, such oscillations are nevertheless smaller than those observed in \cite[Fig.\ 11b]{bao_flux_form_dg_spherical_swe_14} for a standard flux-form DG method at the same polynomial degree and resolution, despite the fact that their method made use of a dissipative interface flux. For our ES formulation using the interface flux in \cref{eq:flux_ec_llf}, which employs the same dissipation term and wave speed estimate used in \cite{bao_flux_form_dg_spherical_swe_14}, such oscillations are not present, suggesting that they may be the result of spurious numerical behaviour which is suppressed through the use of a flux-differencing ES formulation rather than a standard DG scheme. Further comparisons to results obtained using standard DG methods will be presented in \cref{sec:barotropic_instability,sec:rossby}.

\begin{figure}
\centering
\begin{subfigure}{0.5\textwidth}
\centering
\includegraphics[width=\textwidth]{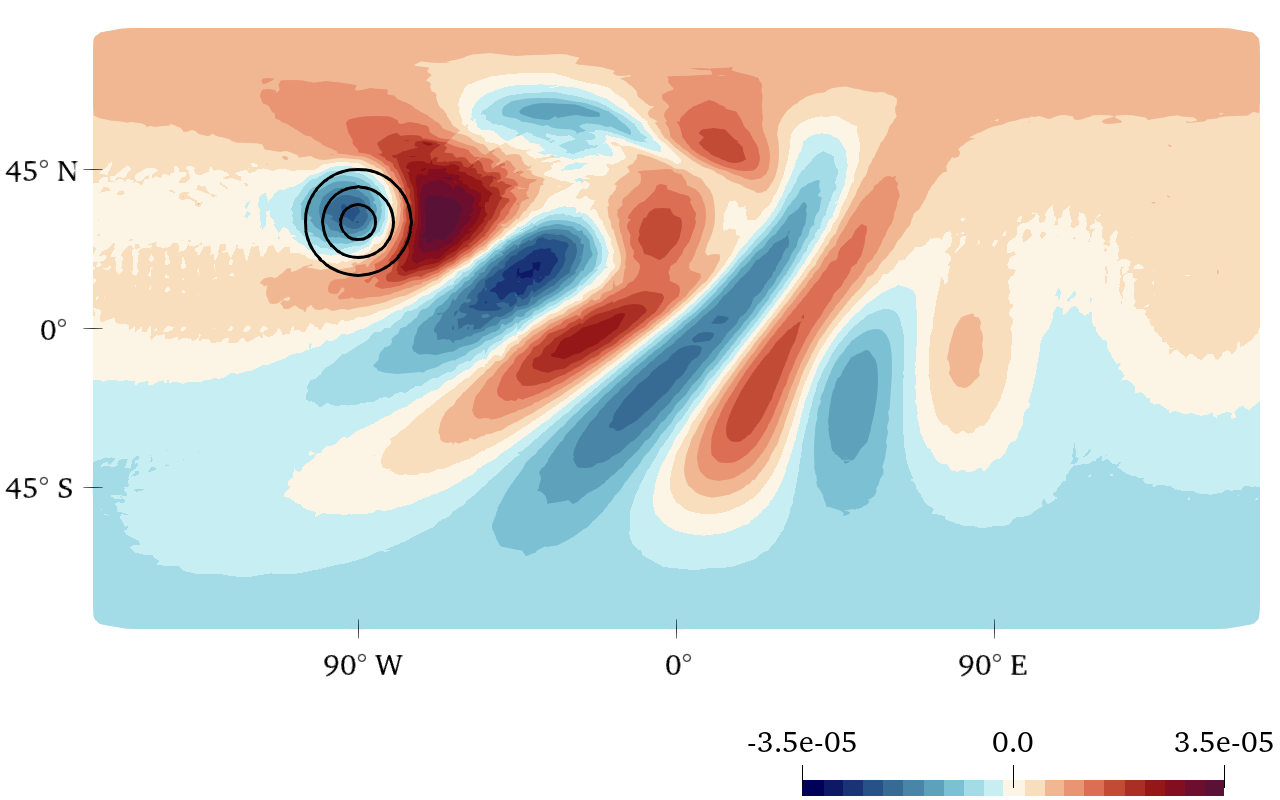}
\caption{EC with $(N, M) = (3,20)$}
\end{subfigure}%
\hfill
\begin{subfigure}{0.5\textwidth}
\includegraphics[width=\textwidth]{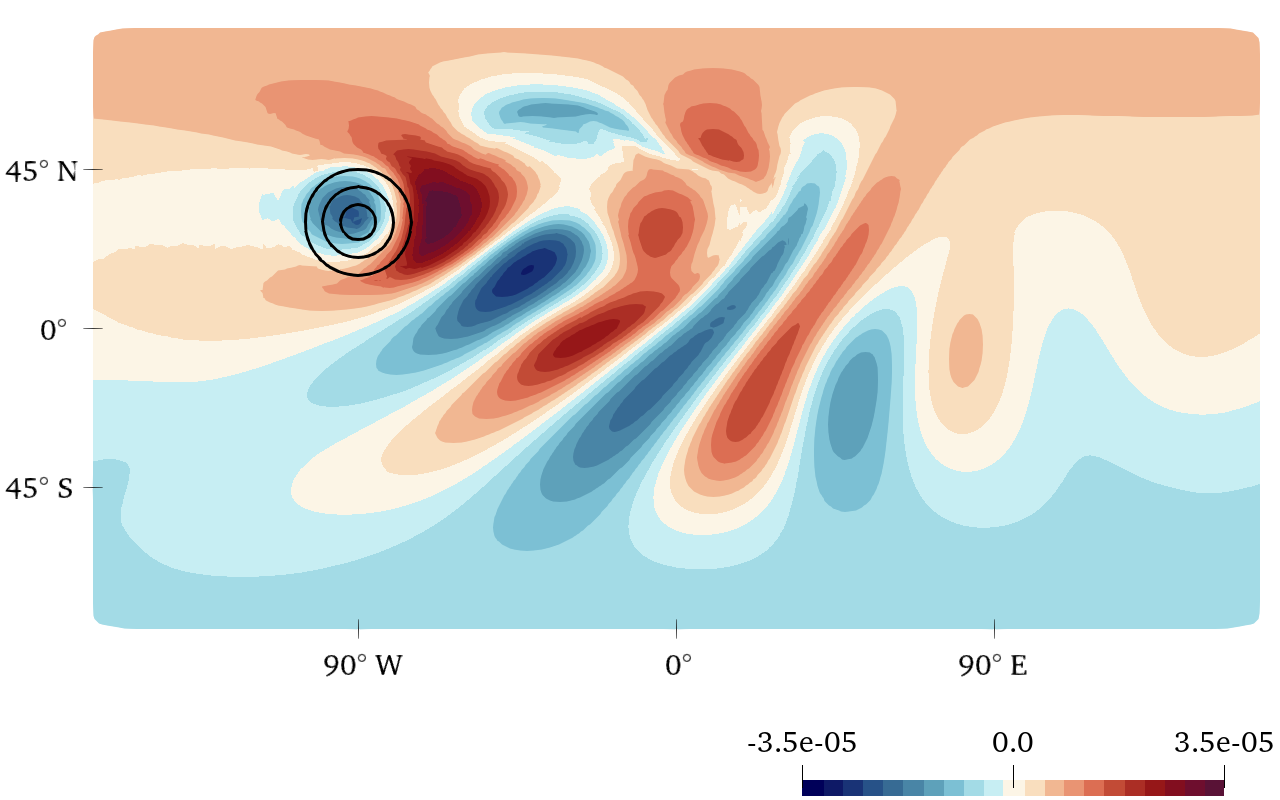}
\centering
\caption{ES with $(N, M) = (3,20)$}
\end{subfigure}%
\caption{Relative vorticity field ($\mathrm{s}^{-1}$) at $t = 7 \text{ days}$ for the isolated mountain problem with $\vref = 20 \text{ m/s}$, where topography contours at $500 \text{ m}$, $1000 \text{ m}$, and $1500 \text{ m}$ are shown in black}\label{fig:isolated_mountain_contours}
\end{figure}

\subsection{Barotropic instability}\label{sec:barotropic_instability}

The third case we consider is a barotropic instability initiated by a perturbation applied to a mid-latitude jet, as proposed in \cite{galewsky_barotropic_instability_04}. The initial velocity field is prescribed as a purely zonal flow given by
\begin{equation}
u_0(\lambda,\theta) \coloneqq \begin{cases}
\dfrac{\vref \exp((\theta - \theta_0)^{-1}
(\theta - \theta_1)^{-1})}{\exp(-4 (\theta_1 - \theta_0)^{-2})}, & \quad \theta_0 < \theta < \theta_1, \\
0, & \quad \text{otherwise},
\end{cases}
\end{equation}
and $v_0(\lambda,\theta) = 0 \ \text{m/s}$, where $\vref = 80 \ \mathrm{m}/\mathrm{s}$, $\theta_0 = \pi/7$, and $\theta_1 = \pi/2 - \theta_0$. Taking $h_{\mathrm{ref}} = 10158 \ \mathrm{m}$ as in \cite{baldauf_dg_shallow_water_covariant_20} so as to achieve a global mean layer depth of approximately $10000 \ \mathrm{m}$ as specified in \cite{galewsky_barotropic_instability_04}, we compute the unperturbed balanced height field as
\begin{equation}
h_{\mathrm{bal}}(\lambda,\theta) \coloneqq h_{\mathrm{ref}} -
\frac{a}{g} \int_{-\pi/2}^{\theta} u_0(\lambda,\theta')\left(2\Omega\sin(\theta') +
u_0(\lambda,\theta')\frac{\tan(\theta')}{ a} \right)\, \mathrm{d}\theta',
\end{equation}
where the integral is evaluated numerically using \texttt{QuadGK.jl} \cite{johnson_quadgk_13}, employing an adaptive scheme based on a composite 15-point Gauss--Kronrod quadrature rule (which itself embeds a seven-point Legendre--Gauss quadrature rule) with a relative tolerance of $\sqrt{\varepsilon}$, where $\varepsilon \approx 2.22 \times 10^{-16}$ denotes machine epsilon in double-precision arithmetic. Adding a small perturbation to the depth field to trigger the instability, we obtain
\begin{equation}\label{eq:perturbation}
h_0(\lambda, \theta) = \begin{cases}
h_{\mathrm{bal}}(\lambda,\theta) + \delta h \cos\theta \exp(-(\lambda/\alpha)^2)
\exp(-((\theta_2 -\theta)/\beta)^2), & \quad -\pi < \lambda < \pi,\\
h_{\mathrm{bal}}(\lambda,\theta), & \quad \text{otherwise},
\end{cases}
\end{equation}
where $\alpha = 1/3$, $\beta = 1/15$, $\theta_2 = \pi/4$, and we consider the unperturbed case of $\delta h = 0 \ \mathrm{m}$ as well as the perturbed case of $\delta h = 120 \ \mathrm{m}$, as recommended in \cite{galewsky_barotropic_instability_04}.
\par
Considering a polynomial degree of $N=3$ and successively refined grids with $M = 16$, $M=32$, and $M=64$, which, according to \cref{eq:nominal_resolution}, correspond to nominal resolutions of approximately 208 km, 104 km, and 52 km, respectively, we evolve the unperturbed solution forward in time and plot the evolution of the normalized $L^2$ error (based on the deviation from the balanced initial state) for the EC and ES schemes over a period of 12 days in \cref{fig:barotropic_instability_evolution}. As the well balancing property established in \cref{thm:well_balancing} applies only to the rest-state equilibrium in \cref{eq:equilibrium}, the discretization is not expected to preserve the initial condition for all time, and we indeed observe significant deviation in all cases due to numerical perturbations which are introduced through discretization error, roundoff error, as well as the representation of the initial condition within a finite-dimensional approximation space. However, the onset of such rapid error growth is shifted later in time as the mesh is refined, and the integrated total energy $I_S[\entropy]$ is nevertheless conserved for the EC scheme and dissipated for the ES scheme, in accordance with \cref{thm:entropy_balance_covariant}.
\par
\begin{figure}[!t]
\centering
\begin{subfigure}{0.333\textwidth}
\centering
\includegraphics[width=\textwidth]{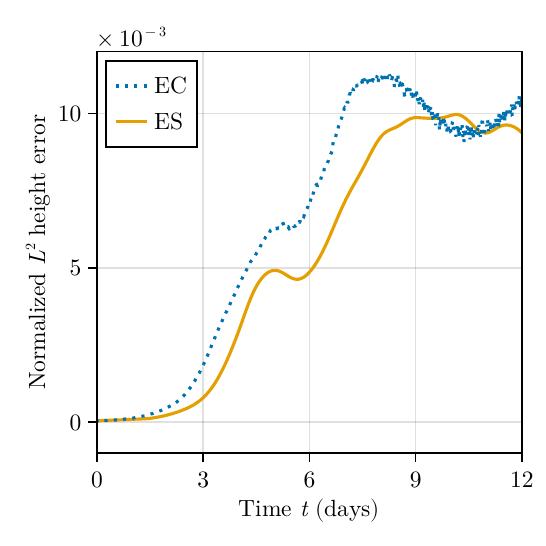}
\subcaption{Normalized $L^2$ height error with $(N, M) = (3, 16)$}
\end{subfigure}%
\hfill
\begin{subfigure}{0.333\textwidth}
\includegraphics[width=\textwidth]{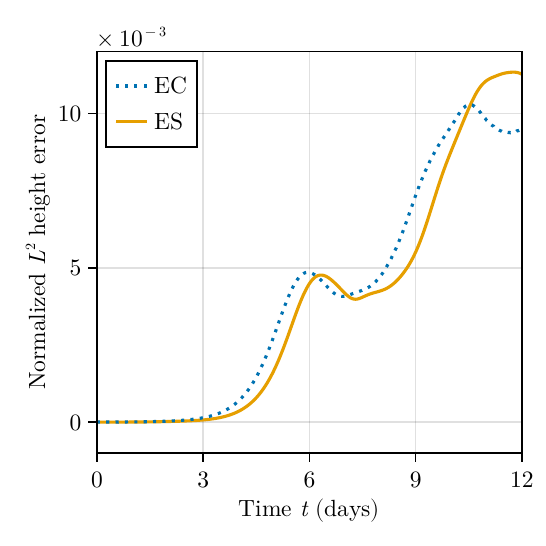}
\subcaption{Normalized $L^2$ height error with $(N, M) = (3, 32)$}
\end{subfigure}%
\hfill
\begin{subfigure}{0.333\textwidth}
\includegraphics[width=\textwidth]{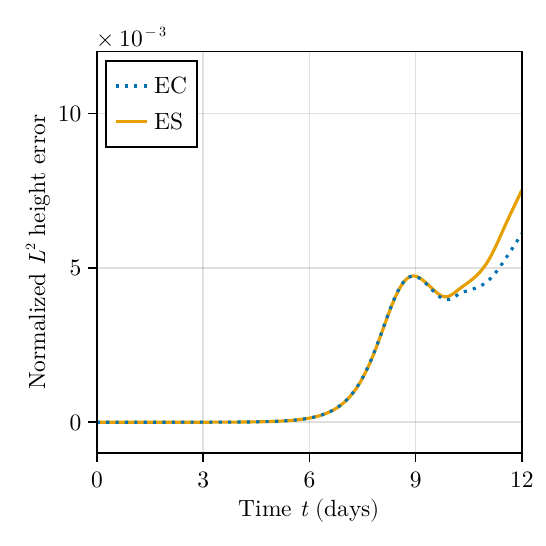}
\subcaption{Normalized $L^2$ height error with $(N, M) = (3, 64)$}
\end{subfigure} \\
\begin{subfigure}{0.333\textwidth}
\centering
\includegraphics[width=\textwidth]{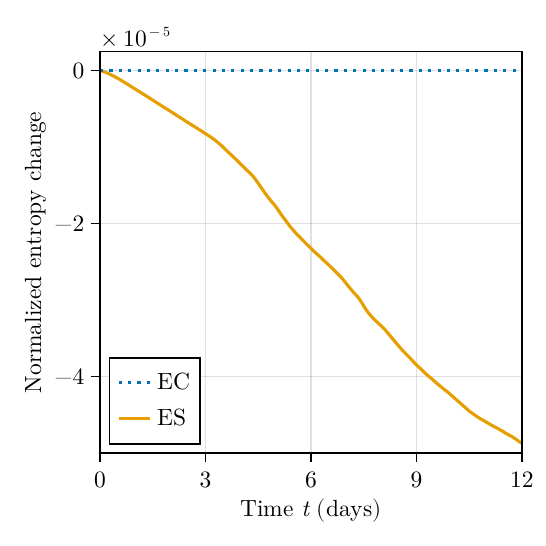}
\subcaption{Normalized entropy change with $(N, M) = (3, 16)$}
\end{subfigure}%
\hfill
\begin{subfigure}{0.333\textwidth}
\includegraphics[width=\textwidth]{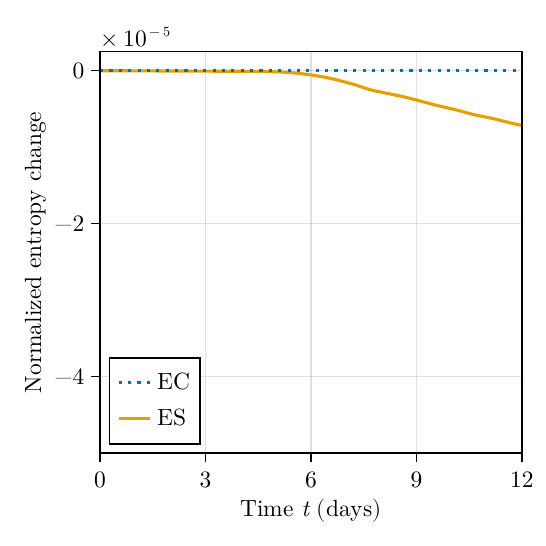}
\subcaption{Normalized entropy change with $(N, M) = (3, 32)$}
\end{subfigure}%
\hfill
\begin{subfigure}{0.333\textwidth}
\includegraphics[width=\textwidth]{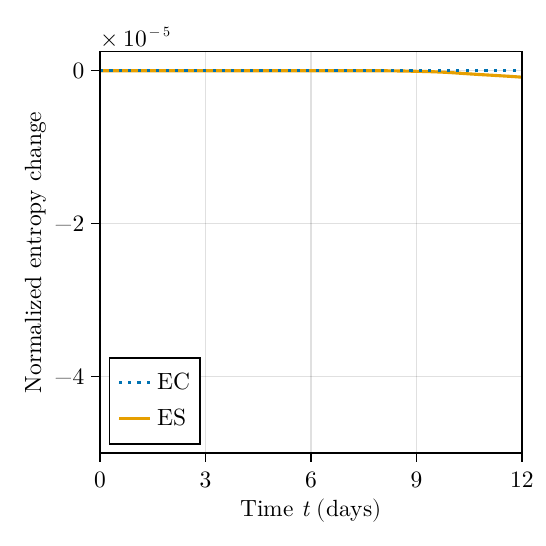}
\subcaption{Normalized entropy change with $(N, M) = (3, 64)$}
\end{subfigure}%
\caption{Error growth and total energy (\ie mathematical entropy) conservation or dissipation for the unperturbed barotropic instability problem}\label{fig:barotropic_instability_evolution}
\end{figure}

While we have thus far used a small time step so as to neglect temporal discretization error in our numerical experiments, it should be emphasized that the proposed discretizations only guarantee entropy conservation or dissipation at the semi-discrete level, requiring additional techniques (\eg relaxation Runge--Kutta methods \cite{ketcheson_relaxation_19,ranocha_relaxation_20}) to ensure fully-discrete entropy stability. To examine the influence of the temporal discretization on mass and entropy conservation, we ran the unperturbed barotropic instability problem using an EC spatial discretization with $(N,M) = (3, 16)$ for fixed time step sizes of $\Delta t \in \{10 \text{ s}, 20 \text{ s}, 40 \text{ s}, 80 \text{ s}, 160 \text{ s}, 320 \text{ s}\}$. Plotting the absolute values of the quantities defined in \cref{eq:normalized_mass_entropy}, we see in \cref{fig:timestep_mass_conservation,fig:timestep_entropy_conservation} that although mass is conserved up to roundoff error for all time step sizes, as expected for linear invariants of ODEs discretized with Runge--Kutta methods \cite{shampine_conservation_laws_odes_86}, the entropy conservation error (which we note to be primarily dissipative here) increases with time more rapidly for larger time step sizes. Plotting the normalized absolute change in both quantities at $t = 12\text{ days}$ in \cref{fig:timestep_convergence}, we observe fifth-order temporal convergence of the entropy error, which is consistent with Chan's observations in \cite[Fig.\ 2c]{chan_discretely_entropy_conservative_dg_18} for an entropy-conservative spatial discretization of the compressible Euler equations using the same fourth-order low-storage explicit Runge--Kutta method employed in this work.

\begin{figure}[t!]
\begin{subfigure}{0.333\textwidth}
\includegraphics[width=\textwidth]{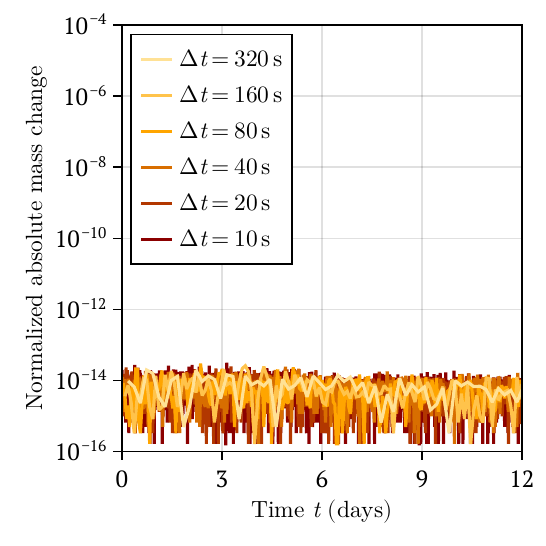}
\subcaption{Mass conservation}\label{fig:timestep_mass_conservation}
\end{subfigure}%
\hfill
\begin{subfigure}{0.333\textwidth}
\includegraphics[width=\textwidth]{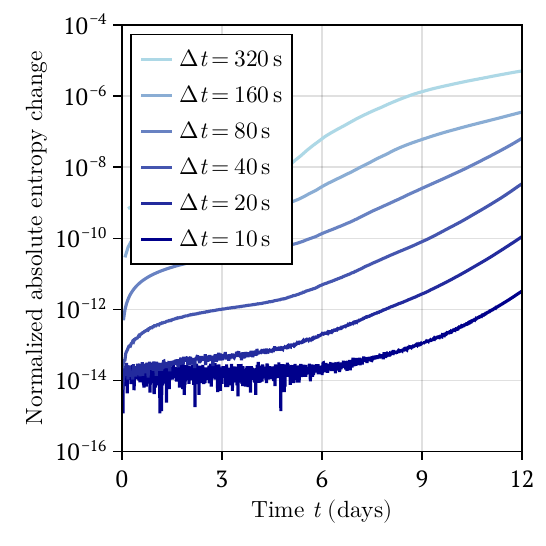}
\subcaption{Entropy conservation}\label{fig:timestep_entropy_conservation}
\end{subfigure}%
\hfill
\begin{subfigure}{0.333\textwidth}
\includegraphics[width=\textwidth]{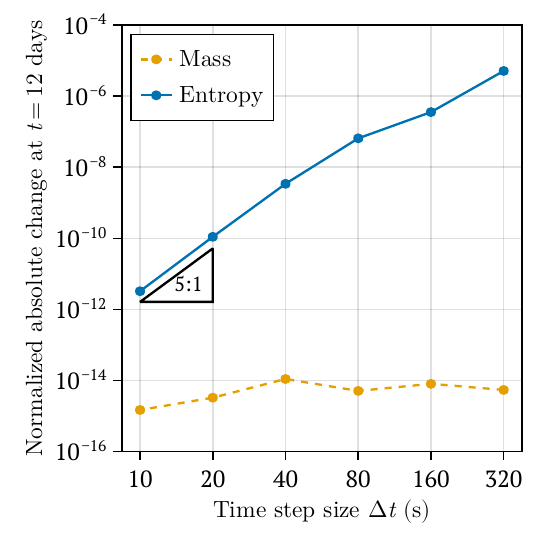}
\subcaption{Convergence with $\Delta t$}\label{fig:timestep_convergence}
\end{subfigure}%
\caption{Effect of time step size on mass and entropy conservation using an EC spatial discretization with $(N,M) = (3,16)$ for the unperturbed barotropic instability problem }\label{fig:barotropic_instability_timestep}
\end{figure}
\par
In \cref{fig:barotropic_instability_contours_unperturbed}, we plot the relative vorticity field given by \cref{eq:relative_vorticity} at $t = 6 \text{ days}$ for an unperturbed initial condition, viewed as an orthographic projection of the northern hemisphere with the $\lambda = 0$ meridian pointed downwards. For the lower-resolution simulations with $M = 16$ and $M=32$, nonphysical flow features (\ie grid imprinting) associated with numerical perturbation growth can be seen, with significantly more high-frequency noise observed for the EC scheme than for the ES scheme. At $M = 64$, the initial condition is preserved quite well, suggesting that such a resolution would likely be sufficient for qualitatively preserving the physical flow features arising from a perturbed initial condition. Comparing the contours in \cref{fig:barotropic_instability_contours_perturbed} resulting from the perturbed initial condition to reference solutions such as those in \cite{galewsky_barotropic_instability_04}, we see that this is indeed the case, with $M=64$ being sufficient to obtain vorticity fields which exhibit the qualitative features observed in the literature, without interference by artificial flow features initiated by numerical perturbations absent in the analytical definition of the initial condition.
\par
Although nominal resolution as defined in \cref{eq:nominal_resolution} is not a perfectly generalizable measure of a scheme's capability to capture fine-scale flow features, particularly when comparing methods with different orders of accuracy, we note that the 52 km nominal resolution obtained for $(N,M) = (3, 64)$ is approximately the same as that used for a standard DG method in \cite[Section~4.2]{bao_flux_form_dg_spherical_swe_14}, where their choice of $(N,M) = (7, 30)$ corresponds to a nominal resolution of approximately 48 km. Furthermore, the elimination of visible grid imprinting between resolutions of 104 km and 52 km is roughly consistent with the findings of St-Cyr \etal \cite[Fig.\ 13]{stcyr_jablonowski_comparason_adaptive_shallow_water_08}, wherein grid imprinting is visible for a continuous spectral-element method at a reported resolution of 1.25$^\circ$ (approximately 139 km) but not at $0.625^\circ$ (approximately 69 km).
\par
\begin{figure}[!t]
\centering
\begin{subfigure}{0.333\textwidth}
\includegraphics[width=\textwidth]{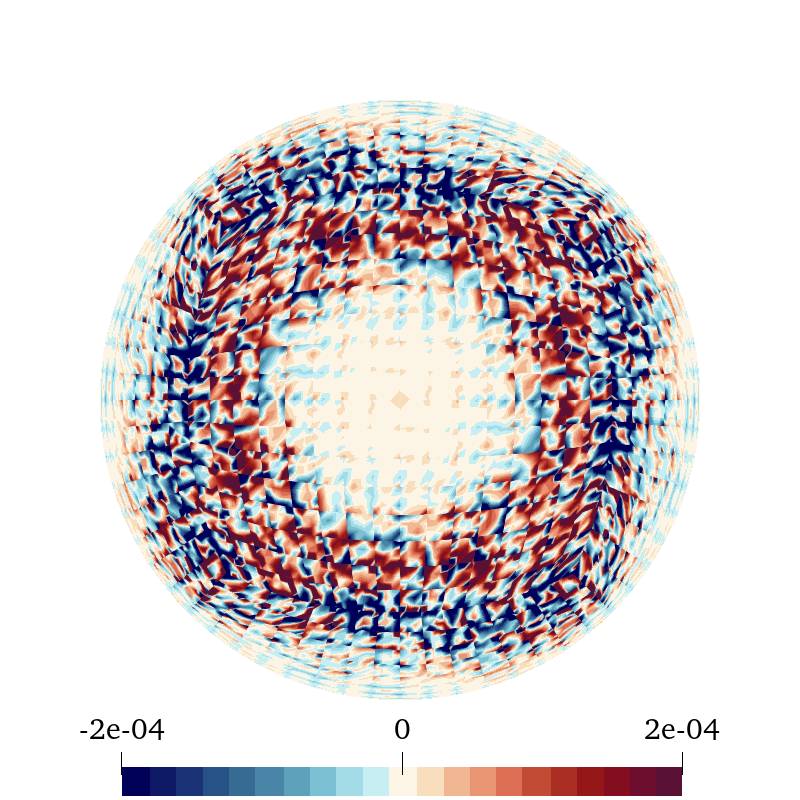}
\subcaption{EC with $(N, M) = (3, 16)$}
\end{subfigure}%
\hfill
\begin{subfigure}{0.333\textwidth}
\includegraphics[width=\textwidth]{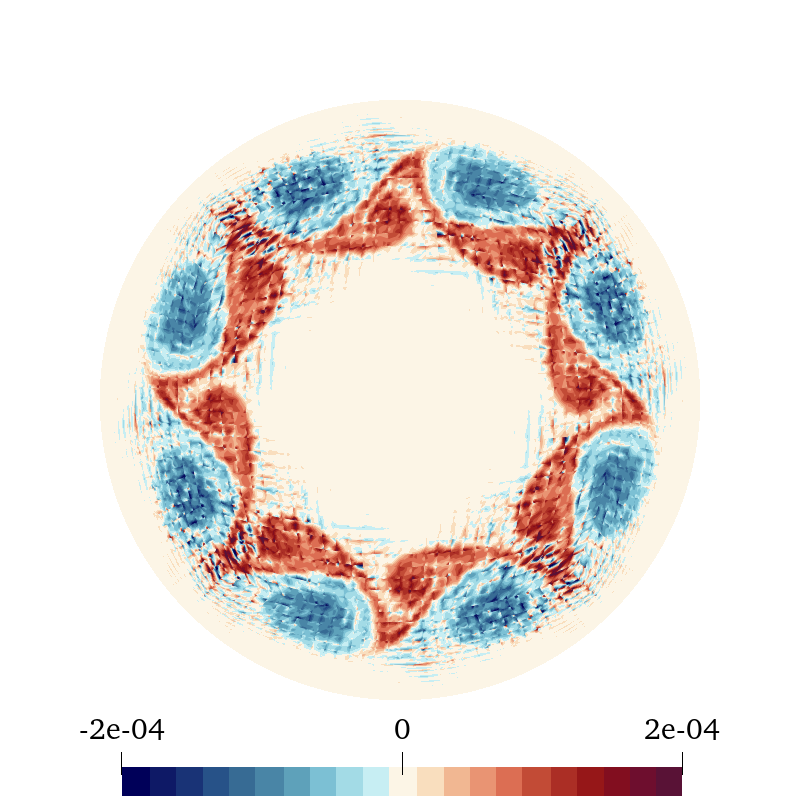}
\subcaption{EC with $(N, M) = (3, 32)$}
\end{subfigure}%
\hfill
\begin{subfigure}{0.333\textwidth}
\includegraphics[width=\textwidth]{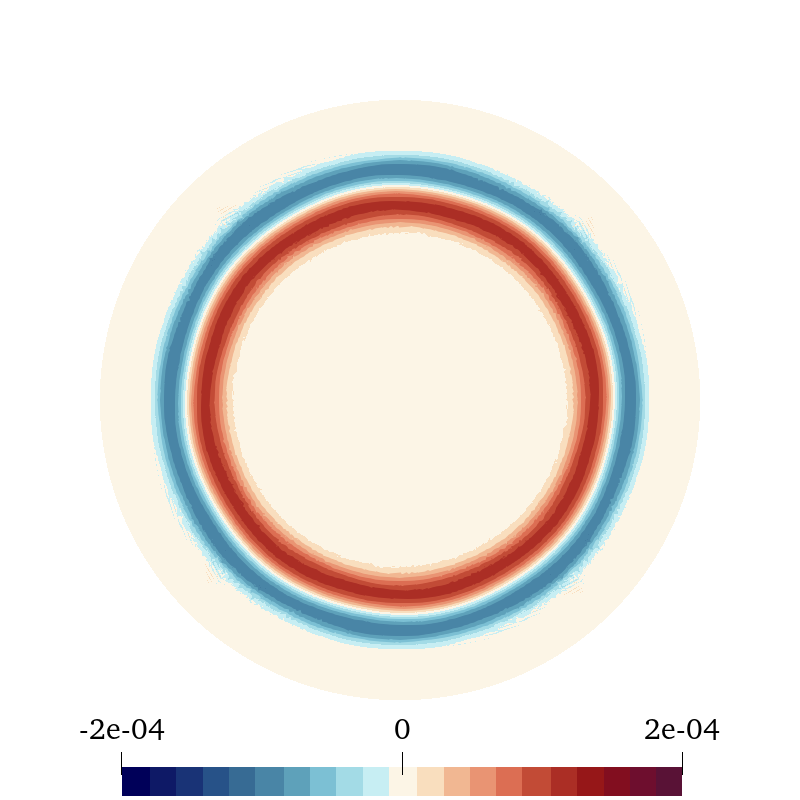}
\subcaption{EC with $(N, M) = (3, 64)$}
\end{subfigure}\\
\begin{subfigure}{0.333\textwidth}
\includegraphics[width=\textwidth]{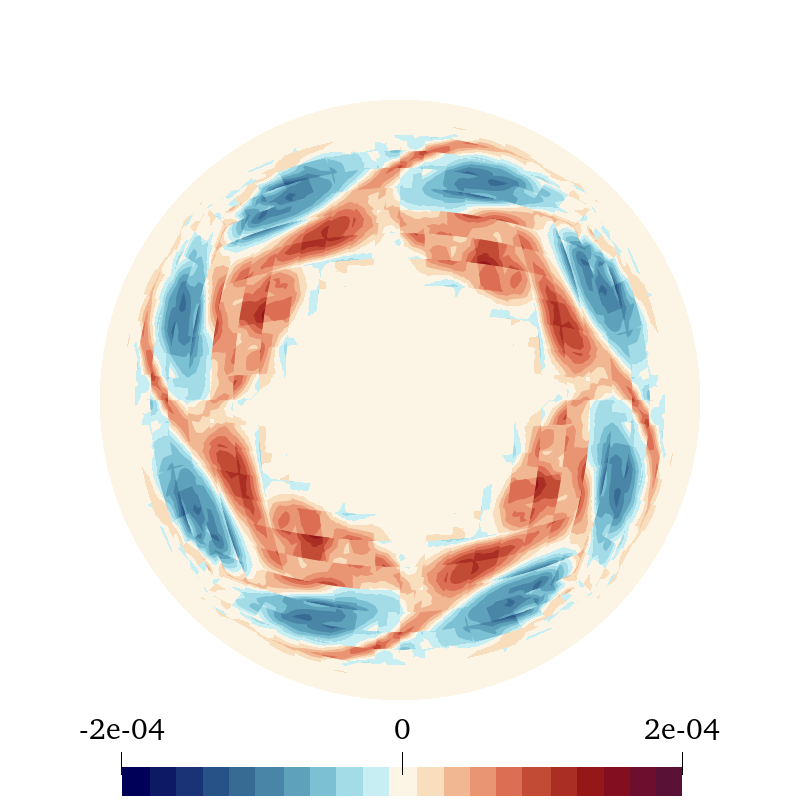}
\subcaption{ES with $(N, M) = (3, 16)$}
\end{subfigure}%
\hfill
\begin{subfigure}{0.333\textwidth}
\includegraphics[width=\textwidth]{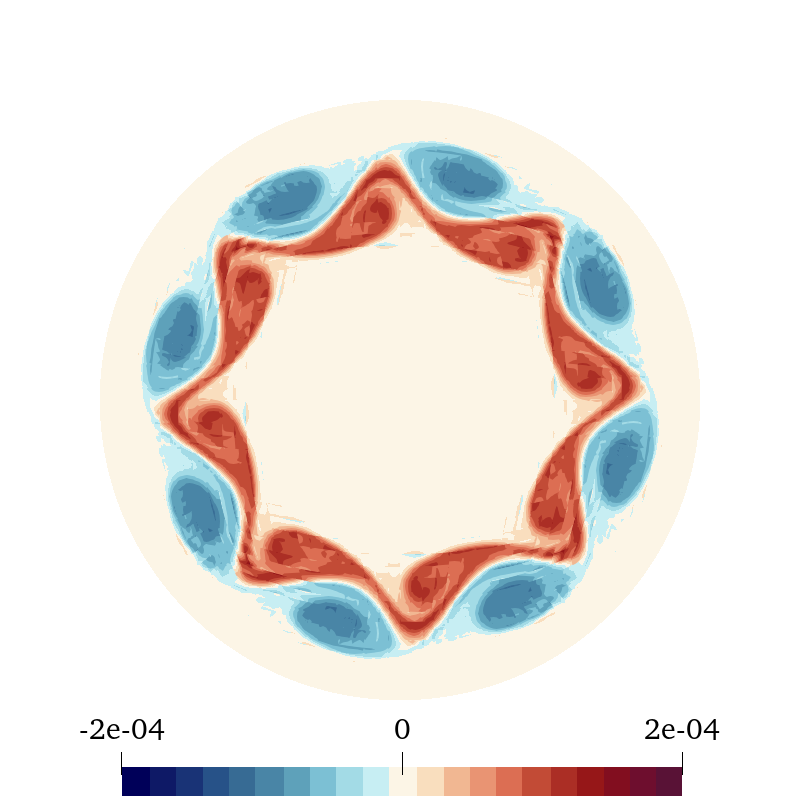}
\subcaption{ES with $(N, M) = (3, 32) $}
\end{subfigure}%
\hfill
\begin{subfigure}{0.333\textwidth}
\includegraphics[width=\textwidth]{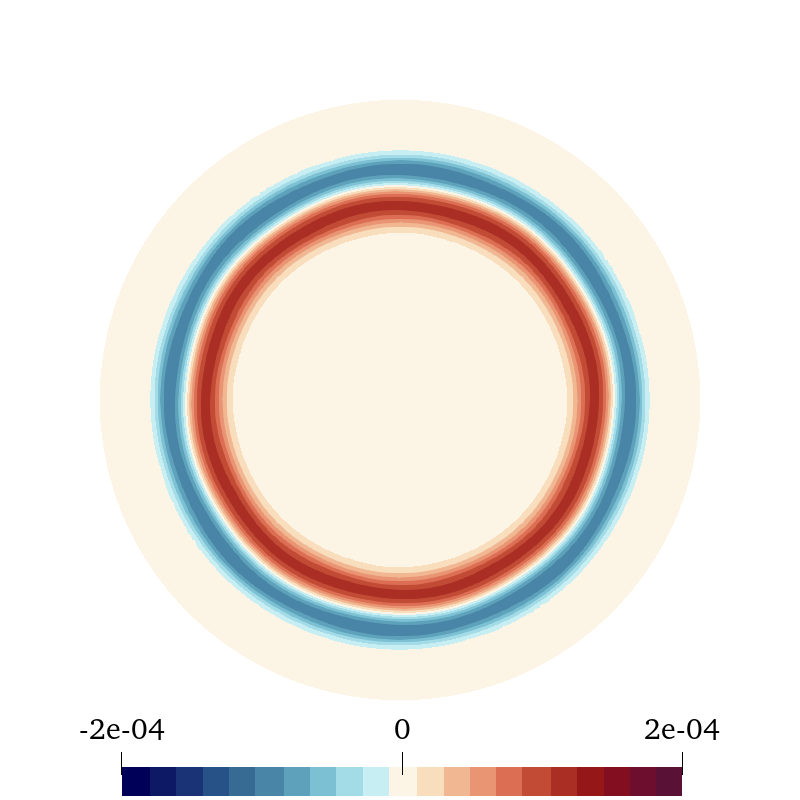}
\subcaption{ES with $(N, M) = (3, 64)$}
\end{subfigure}%
\caption{Relative vorticity field ($\text{s}^{-1}$) at $t = 6 \text{ days}$ for the unperturbed barotropic instability problem }\label{fig:barotropic_instability_contours_unperturbed}
\end{figure}

\begin{figure}[!t]
\centering
\begin{subfigure}{0.333\textwidth}
\includegraphics[width=\textwidth]{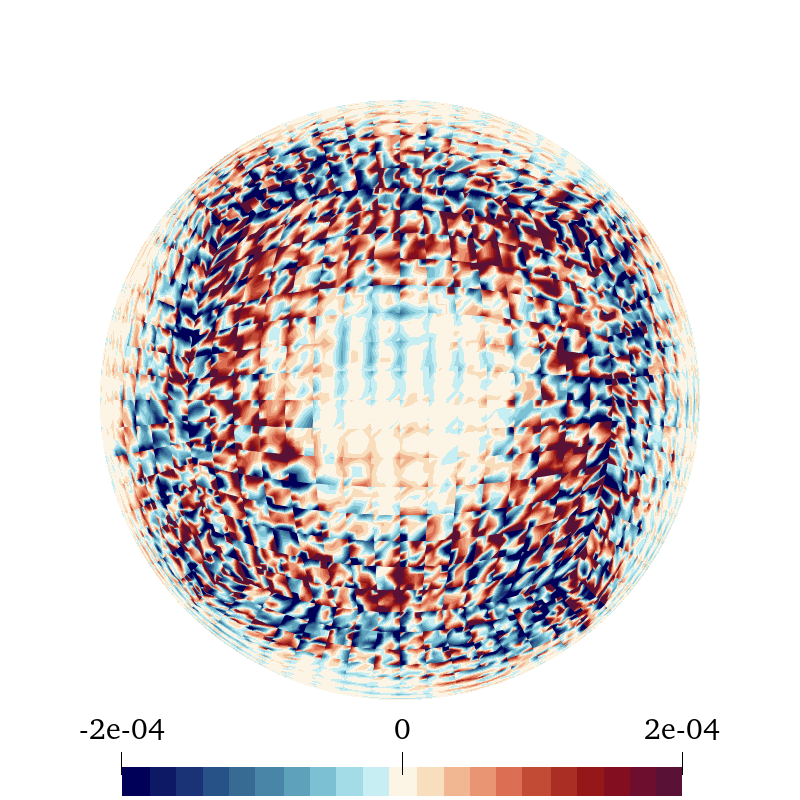}
\subcaption{EC with $(N, M) = (3, 16)$}
\end{subfigure}%
\hfill
\begin{subfigure}{0.333\textwidth}
\includegraphics[width=\textwidth]{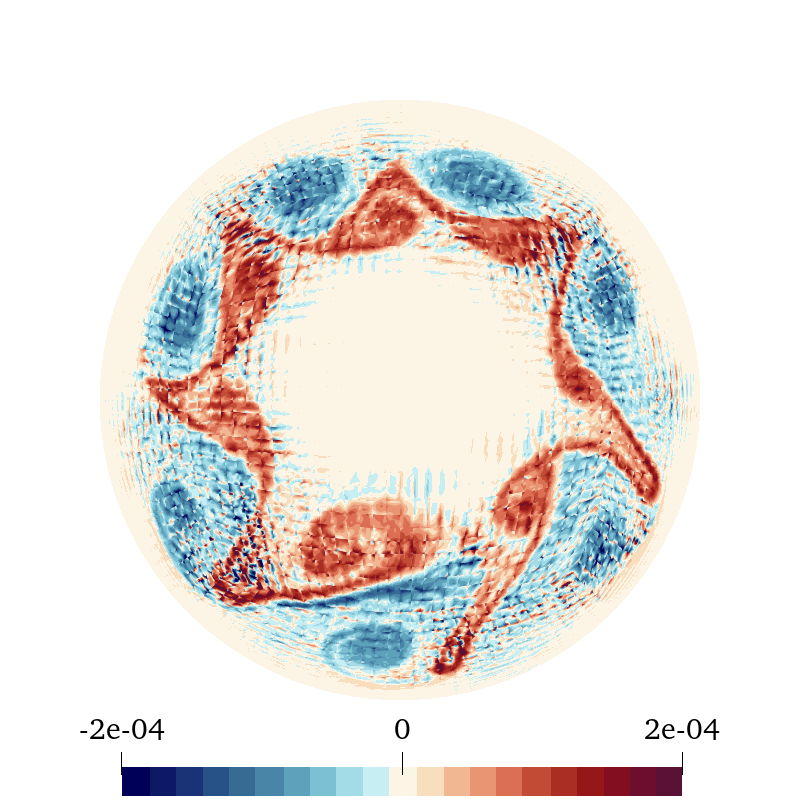}
\subcaption{EC with $(N, M) = (3, 32)$}
\end{subfigure}%
\hfill
\begin{subfigure}{0.333\textwidth}
\includegraphics[width=\textwidth]{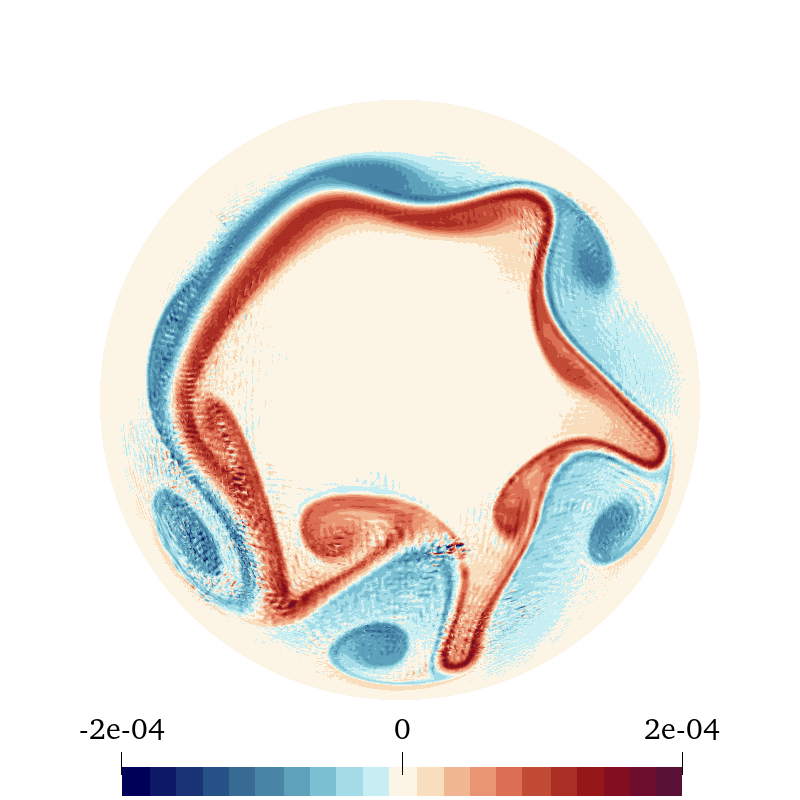}
\subcaption{EC with $(N, M) = (3, 64)$}
\end{subfigure}\\
\begin{subfigure}{0.333\textwidth}
\includegraphics[width=\textwidth]{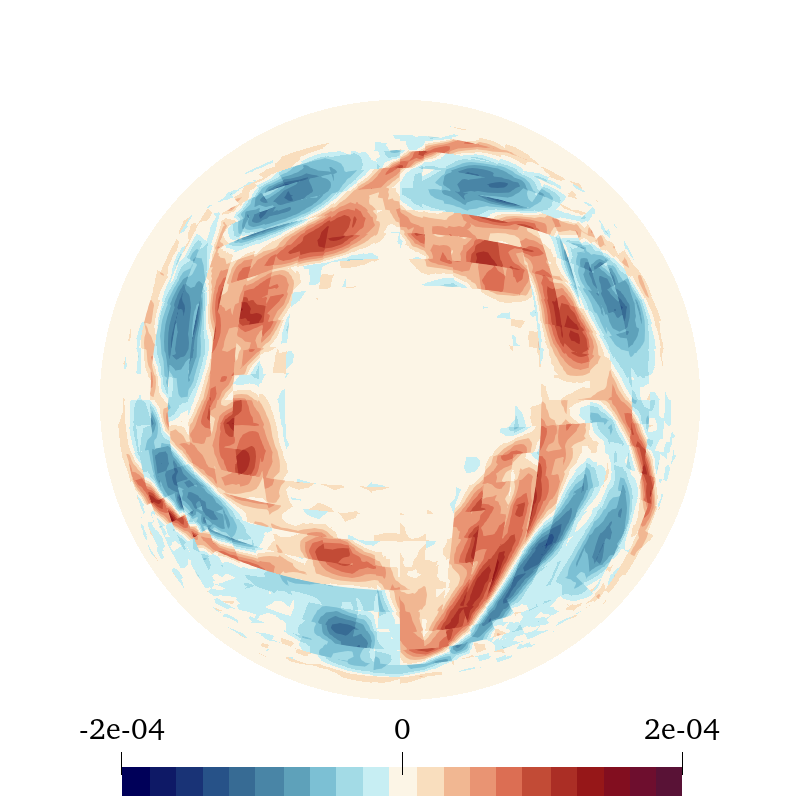}
\subcaption{ES with $(N, M) = (3, 16)$}
\end{subfigure}%
\hfill
\begin{subfigure}{0.333\textwidth}
\includegraphics[width=\textwidth]{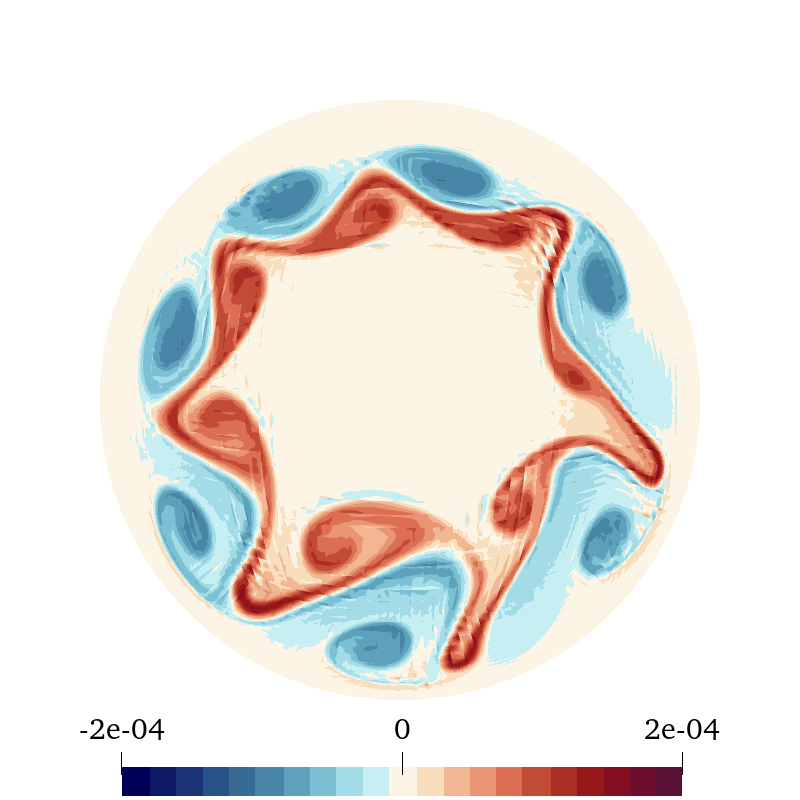}
\subcaption{ES with $(N, M) = (3, 32)$}
\end{subfigure}%
\hfill
\begin{subfigure}{0.333\textwidth}
\includegraphics[width=\textwidth]{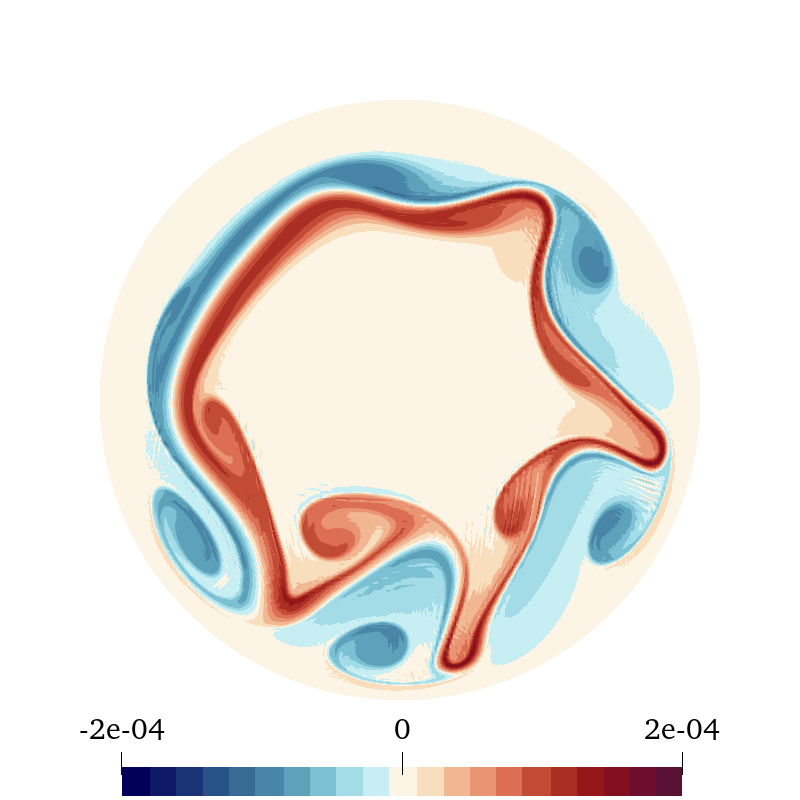}
\subcaption{ES with $(N, M) = (3, 64)$}
\end{subfigure}%
\caption{Relative vorticity field ($\text{s}^{-1}$) at $t = 6 \text{ days}$ for the perturbed barotropic instability problem}\label{fig:barotropic_instability_contours_perturbed}
\end{figure}

To examine the influence of the discretization on the solution quality more closely, we show in \cref{fig:barotropic_instability_closeup} close-up views of a portion of the vorticity field for the EC and ES schemes, as well as for a standard DG method using the same cubed-sphere grid, tensor-product polynomial approximation space, and collocated LGL quadrature.
The standard DG method discretizes \cref{eq:pde_reference_space} using the weak formulation in \cref{eq:dg_standard}, without skew-symmetric splitting or flux differencing, and employs a local Lax--Friedrichs interface flux given by
\begin{equation}\label{eq:flux_standard_llf}
(J\state{f}^j)^{*}_{L,R} \coloneqq \frac{1}{2} \left[(J\state{f}^j)_L + (J\state{f}^j)_R \right] - \frac{J_L}{2}\max(\lambda_L^j, \lambda_R^j)(\state{u}_R - \state{u}_L),
\end{equation}
where we note that the dissipation term is identical to that employed for the ES scheme in \cref{eq:flux_ec_llf}. While the EC scheme exhibits oscillatory behaviour of a similar nature to that observed in the isolated mountain case due to the lack of any mechanism to dissipate high-frequency noise, it is nevertheless remarkable that a dissipation-free numerical method is able to remain stable for a relatively complex model problem for a geophysical flow exhibiting multiscale behaviour and steep solution gradients. These oscillations are greatly reduced through the introduction of the dissipation term to the EC scheme to obtain the ES formulation. While the standard DG method was able to run to completion for this case, we nevertheless observe some spurious behaviour in \cref{fig:barotropic_instability_closeup_standard} which is not present in \cref{fig:barotropic_instability_closeup_es}, qualitatively illustrating the benefit of the flux-differencing formulation in this particular case. We will present quantitative robustness comparisons in the following subsection.

\begin{figure}[t!]
\begin{subfigure}{0.333\textwidth}
\includegraphics[width=\textwidth]{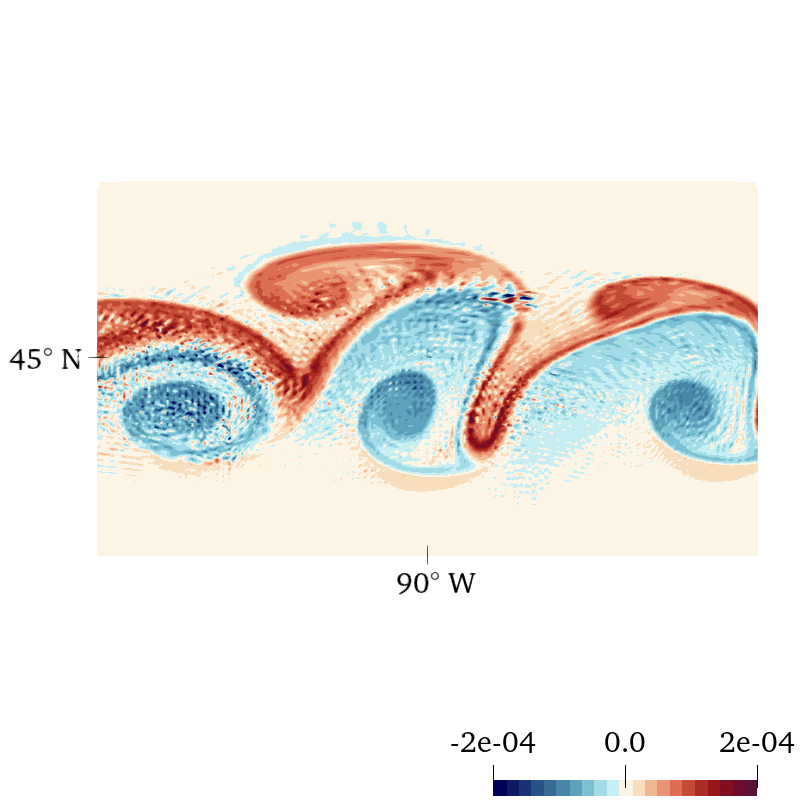}
\subcaption{Entropy-conservative method}\label{fig:barotropic_instability_closeup_ec}
\end{subfigure}%
\hfill
\begin{subfigure}{0.333\textwidth}
\includegraphics[width=\textwidth]{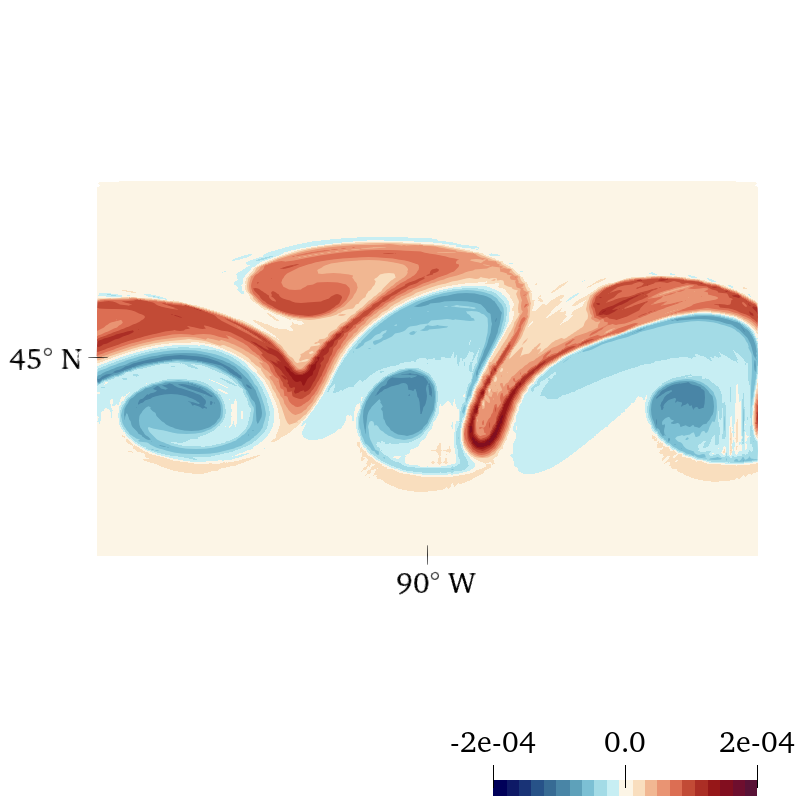}
\subcaption{Entropy-stable method}\label{fig:barotropic_instability_closeup_es}
\end{subfigure}%
\hfill
\begin{subfigure}{0.333\textwidth}
\includegraphics[width=\textwidth]{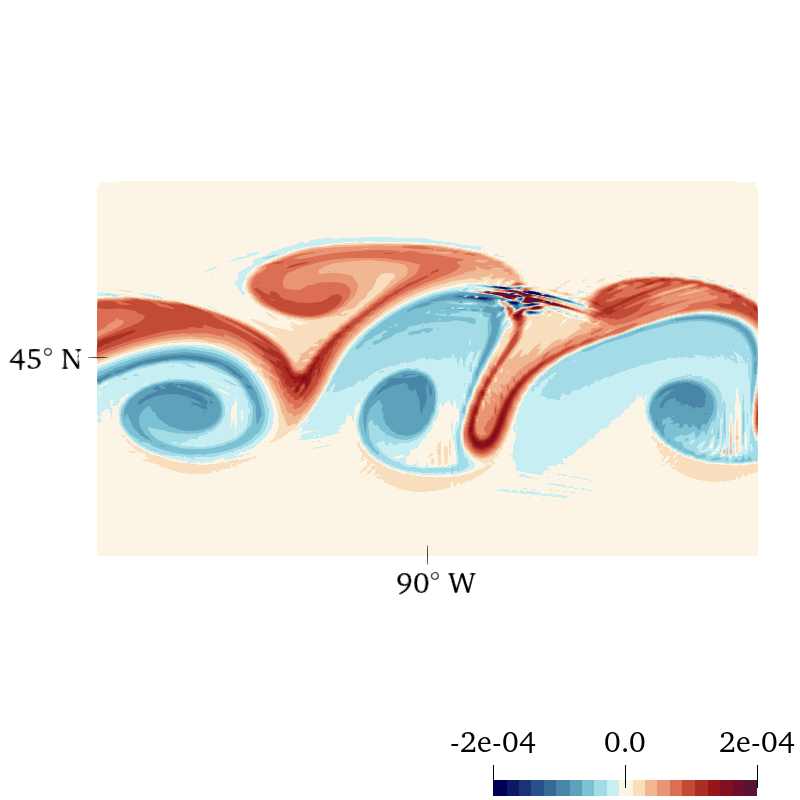}
\subcaption{Standard DG method}\label{fig:barotropic_instability_closeup_standard}
\end{subfigure}%
\caption{Comparison of relative vorticity fields ($\text{s}^{-1}$) at $t = 6 \text{ days}$ with $(N,M) = (3,64)$} \label{fig:barotropic_instability_closeup}
\end{figure}

\subsection{Rossby--Haurwitz wave}\label{sec:rossby}

As a fourth and final test case, we consider a Rossby--Haurwitz wave with wavenumber four, as described in \cite[Case~6]{williamson1992standard}. Taking parameter values of $\omega = K = 7.848 \times 10^{-6} \ \mathrm{s}^{-1}$, $R = 4$, and $h_{\mathrm{ref}} = 8000 \ \mathrm{m}$, the functions
\begin{subequations}
\begin{align}
A(\theta) &\coloneqq \frac{\omega}{2}(2 \Omega+\omega) \cos^2 \theta +
\frac{K^2}{4} \cos^{2 R} \theta\left((R+1) \cos^2\theta + (2 R^2-R-2) -
2\frac{R^2}{\cos^2\theta} \right), \\
B(\theta) &\coloneqq \frac{2(\Omega+\omega) K}{(R+1)(R+2)} \cos ^R \theta\left((R^2+2 R+2) -
(R+1)^2 \cos^2 \theta\right), \\
C(\theta) &\coloneqq \frac{1}{4} K^2 \cos^{2 R} \theta\left((R+1) \cos^2 \theta-(R+2)\right),
\end{align}
\end{subequations}
are used to prescribe the initial depth and spherical velocity components as
\begin{subequations}
\begin{align}
h_0(\lambda,\theta) &= h_{\mathrm{ref}} +
\frac{a^2}{g}\left(A(\theta) + B(\theta)\cos(R\lambda) + C(\theta)\cos(2R\lambda) \right),\\
u_0(\lambda,\theta) & = a \omega \cos \theta+a K \cos ^{R-1} \theta
(R \sin ^2 \theta-\cos ^2 \theta) \cos (R \lambda),\\
v_0(\lambda,\theta) & = -a K R \cos ^{R-1} \theta \sin \theta \sin (R \lambda).
\end{align}
\end{subequations}
While it is common to run such a test case until $t = 7 \ \text{days}$ or $t = 14 \ \text{days}$, we run for 28 days as a more severe stress test of the schemes' robustness for predictions over longer time horizons. Furthermore, as a baseline for such robustness comparisons, we also report results using standard collocated DG methods employing LGL quadrature and a local Lax--Friedrichs numerical flux, as described in \cref{sec:barotropic_instability}.
\par

\begin{figure}[!t]
\begin{subfigure}{0.5\textwidth}
\includegraphics[width=\textwidth]{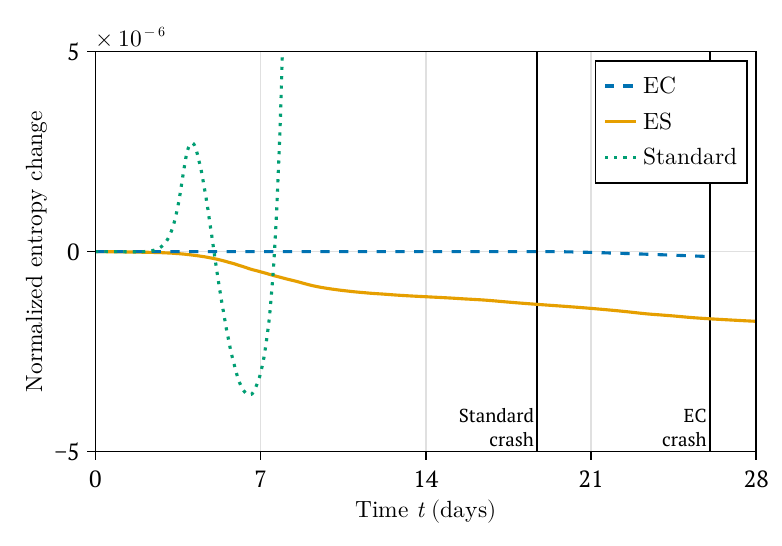}
\subcaption{Normalized entropy change and crash times with $(N, M) = (3, 16)$}\label{fig:rossby_entropy_N3M16}
\end{subfigure}\hfill
\begin{subfigure}{0.5\textwidth}
\includegraphics[width=\textwidth]{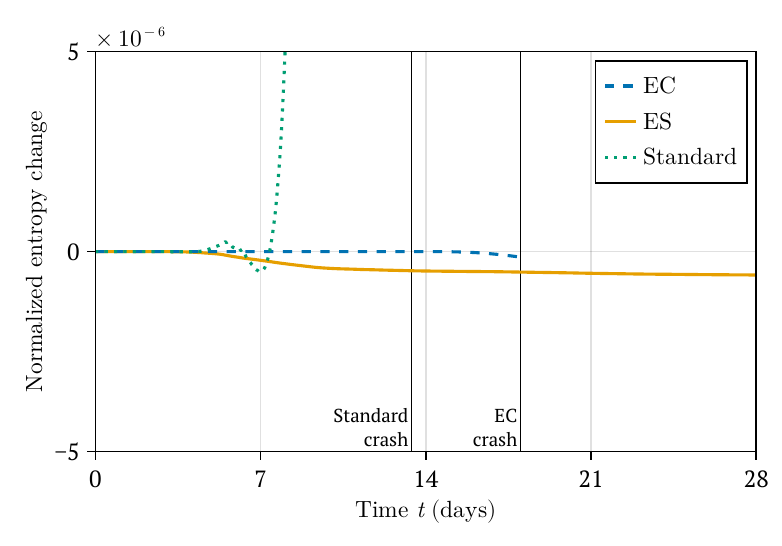}
\subcaption{Normalized entropy change and crash times with $(N, M) = (6, 8)$}\label{fig:rossby_entropy_N6M8}
\end{subfigure}\\
\begin{subfigure}{0.5\textwidth}
\includegraphics[width=\textwidth]{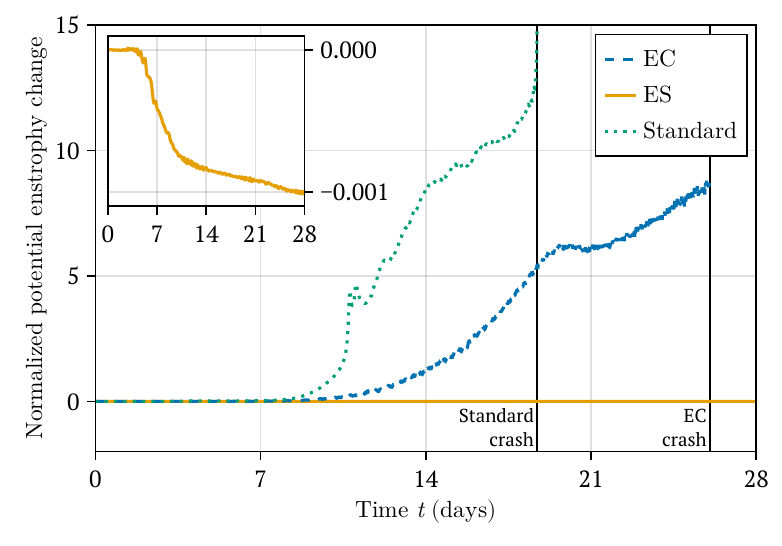}
\subcaption{Normalized potential enstrophy change and crash times with $(N, M) = (3, 16)$}\label{fig:rossby_potential_enstrophy_N3M16}
\end{subfigure}\hfill
\begin{subfigure}{0.5\textwidth}
\includegraphics[width=\textwidth]{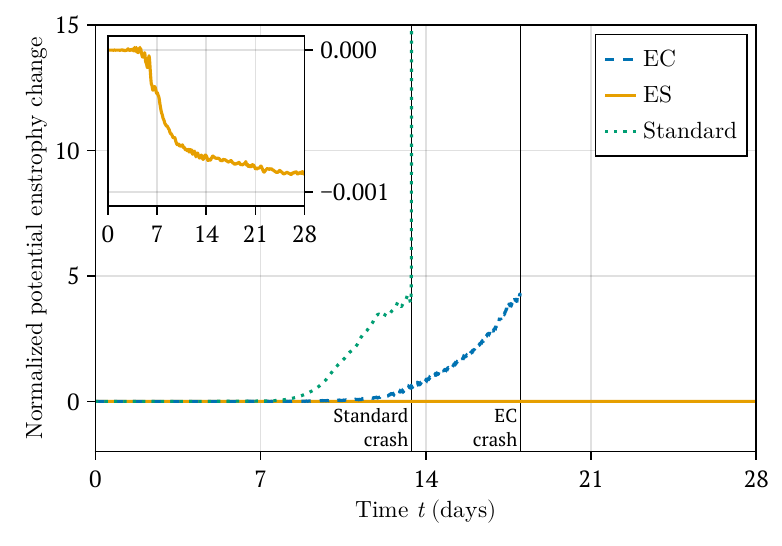}
\subcaption{Normalized potential enstrophy change and crash times with $(N, M) = (6, 8)$}\label{fig:rossby_potential_enstrophy_N6M8}
\end{subfigure}%
\caption{Robustness of EC, ES, and standard collocated DG schemes for the Rossby--Haurwitz wave}
\label{fig:rossby_haurwitz}
\end{figure}

Considering the cases of $(N, M) = (3, 16)$ as well as that of $(N, M) = (6, 8)$, which differ in polynomial degree but correspond to the same nominal resolution of approximately 208 km, we plot the entropy conservation error for the EC, ES, and standard DG schemes in \cref{fig:rossby_entropy_N3M16,fig:rossby_entropy_N6M8}, with vertical lines indicating crash times. When applied to the spherical shallow water equations, the standard DG scheme is conservative of mass, but does not satisfy any discrete entropy conservation or stability properties. We accordingly observe uncontrolled growth in the total energy for the standard DG schemes, with crashes occurring at $t = 18.71 \ \text{days}$ and $t = 13.39 \ \text{days}$ for the $(N, M) = (3, 16)$ and $(N, M) = (6, 8)$ simulations respectively. For both polynomial degrees considered, the EC method is able to run for longer than the corresponding standard DG scheme, but nevertheless eventually crashes due to its inability to damp oscillations, which eventually leads to negative values for the layer depth. As with the standard DG method, the $N=3$ EC scheme runs for longer than the $N=6$ EC scheme at an equal nominal resolution, with crashes occurring at $t = 26.03 \ \text{days}$ and $t = 18.01 \ \text{days}$. Unlike the standard and EC formulations, the ES schemes were capable of running the full 28-day simulations to completion for both $(N, M) = (3, 16)$ and $(N, M) = (6, 8)$, indicating that the local Lax--Friedrichs interface dissipation term in \cref{eq:flux_ec_llf} provided sufficient damping of the oscillations that led to negative depth values when the EC scheme was applied to this problem.
Examining the evolution of the discrete integral of the potential enstrophy $Z \coloneqq (\zeta + f)^2/h$, where $\zeta$ is the relative vorticity defined in \cref{eq:relative_vorticity}, we see in \cref{fig:rossby_potential_enstrophy_N3M16,fig:rossby_potential_enstrophy_N6M8} that the aforementioned oscillatory behaviour manifests for the standard DG and EC schemes in the form of a gradual buildup of grid-scale vorticity, beginning well before the simulations crashed.
While the ES scheme does not guarantee strict conservation or dissipation of potential enstrophy or other vorticity-derived invariants, the integrated potential enstrophy deviated by no more than approximately 0.1\% over the course of each of the 28-day simulations, with such deviations being almost entirely dissipative in nature. These results indicate that the interface dissipation term was sufficient in this case to suppress spurious potential enstrophy production.

\section{Conclusions}

A new discontinuous spectral-element formulation was devised for the rotating shallow water equations with variable bottom topography expressed in covariant form on general unstructured quadrilateral tessellations of curved two-dimensional manifolds. The approach relies on the use of tensor-product summation-by-parts operators within a flux-differencing formulation in reference coordinates derived from a skew-symmetric splitting of the divergence of the momentum flux tensor in covariant form. The resulting methods are shown to be conservative of mass, and, depending on the choice of numerical interface flux, conservative or dissipative of total energy, which serves as a mathematical entropy function for the shallow water system.
\par The proposed schemes were applied to a series of standard model problems for idealized atmospheric dynamics based on the shallow water equations on the sphere, including an unsteady solid-body rotation, a zonal flow over an isolated mountain, a barotropic instability, and a Rossby--Haurwitz wave. Using an entropy-conservative numerical flux at element interfaces, the resulting schemes conserve mass as well as total energy, serving as a robust yet dissipation-free baseline to which numerical dissipation can be added judiciously, for example, to reduce oscillations. When such dissipation is introduced through an entropy-stable interface flux, the resulting schemes guarantee conservation of mass and dissipation of total energy, attain optimal convergence (\ie order $N+1$ for a tensor-product approximation of polynomial degree $N$), and exhibit excellent robustness even under challenging conditions including those involving under-resolved vortical structures and integration over long time horizons.
\par

The advances in this paper represent a first step in the process of developing a new atmospheric dynamical core for numerical weather prediction and climate modelling.
Future work will involve the extension of the proposed approach to the three-dimensional nonhydrostatic compressible Euler equations using an appropriate vertical discretization and a horizontally-explicit vertically-implicit temporal discretization \cite{baldauf_dg_hevi_terrain_following_21}. Additionally, we are investigating extensions to triangular and prismatic elements through the use of multidimensional SBP formulations (as introduced by Hicken \etal \cite{hicken_mdsbp_16} and reviewed within an entropy-stable setting by Chen and Shu \cite{chen_shu_dgsbp_review_20}) as well as entropy-stable tensor-product formulations in collapsed coordinates, which were introduced by Montoya and Zingg \cite{montoya_entropy_stable_24} and extended to prismatic elements by Keim \etal \cite{keim_entropy_stable_hybrid_25}. We would also like to investigate techniques for preserving vorticity-based invariants such as potential enstrophy within the present framework. Additionally, we hope to extend the proposed methodology beyond the context of geophysical fluid dynamics, for example, to covariant formulations of hyperbolic partial differential equations arising in general relativity and relativistic hydrodynamics, as well as to the treatment of curved elements in computational fluid dynamics for engineering applications.

\subsection*{Acknowledgements}

The authors would like to thank Dr.\ Michael Baldauf for the very insightful conversations about the spherical shallow water equations. Tristan Montoya, Gregor J. Gassner, and Andrés M. Rueda-Ramírez acknowledge funding through the German Federal Ministry for Education and Research (BMBF) project ``ICON-DG'' (01LK2315B) of the ``WarmWorld Smarter'' program. Gregor J. Gassner and Andrés M. Rueda-Ramírez acknowledge funding through the Klaus-Tschira Stiftung via the project ``HiFi-Lab'' (00.014.2021). Andrés M. Rueda-Ramírez gratefully acknowledges funding from the Spanish Ministry of Science, Innovation, and Universities through the ``Beatriz Galindo'' grant (BG23-00062)
and from the European Research Council through the Synergies Grant Agreement No.\ 101167322-TRANSDIFFUSE.

\subsection*{Declaration of generative AI in scientific writing}

During proofreading of this article, ChatGPT (OpenAI) was used to assist in identifying spelling, grammar, and clarity issues. All manuscript text was written and thoroughly reviewed by the authors, who take full responsibility for its content.

\end{document}